\newcommand{\K}{\ensuremath{\mathcal{K}}}
\newcommand{\Ks}{\ensuremath{\mathcal{K}^{\strong}}}
\newcommand{\Kso}{\ensuremath{\K^{\strong}_{<\aleph_0}}}
\newcommand{\Ss}{\ensuremath{\mathcal{S}}} 
\newcommand{\Smax}{\ensuremath{S^{\mathrm{max}}}}
\newcommand{\DKD}{\ensuremath{\Der(K/D_0)}}
\newcommand{\OKD}{\ensuremath{\Omega(K/D_0)}}
\DeclareMathOperator{\Rot}{Rot}  
\newenvironment{step}[1][Step]
    {\medskip \noindent{\slshape\bfseries #1}\\}
    {}
\newcommand{\Step}[1]{\paragraph{Step #1}}
\DeclareMathOperator{\tp}{tp}  
\DeclareMathOperator{\td}{td}  
\DeclareMathOperator{\ldim}{ldim}  
\DeclareMathOperator{\codim}{codim}  
\DeclareMathOperator{\rk}{rk}
\DeclareMathOperator{\grk}{grk} 
\DeclareMathOperator{\Der}{Der}  
\DeclareMathOperator{\Aut}{Aut}  
\DeclareMathOperator{\cl}{cl}   
\DeclareMathOperator{\acl}{acl}   
\DeclareMathOperator{\loc}{Loc}   
\DeclareMathOperator{\Ann}{Ann}   
\DeclareMathOperator{\Mat}{Mat}  
\DeclareMathOperator{\Jac}{Jac}  
\DeclareMathOperator{\logd}{\mathnormal{lD}}  
\DeclareMathOperator{\pr}{pr}  
\newcommand{\alg}{\ensuremath{\mathrm{alg}}} 
\newcommand{\restrict}[1]{\ensuremath{\!\!\upharpoonright_{#1}}}
\newcommand{\N}{\ensuremath{\mathbb{N}}}
\newcommand{\Z}{\ensuremath{\mathbb{Z}}}
\newcommand{\Q}{\ensuremath{\mathbb{Q}}}
\newcommand{\C}{\ensuremath{\mathbb{C}}}
\newcommand{\Cat}{\ensuremath{\mathcal{C}}} 
\renewcommand{\L}{\ensuremath{\mathcal{L}}} 
\newcommand{\Loo}{\ensuremath{\mathcal{L}_{\omega_1,\omega}}}
\newcommand{\tensor}{\otimes}
\newcommand{\ga}{\ensuremath{{\mathbb{G}_\mathrm{a}}}}   
\newcommand{\gm}{\ensuremath{{\mathbb{G}_\mathrm{m}}}}  
\newcommand{\HV}{\ensuremath{\mathcal{H}_V}} 
\newcommand{\Jf}[1]{\ensuremath{\mathcal{J}_{#1}}} 
\renewcommand{\phi}{\varphi}
\renewcommand{\le}{\ensuremath{\leqslant}}
\renewcommand{\ge}{\ensuremath{\geqslant}}
\newcommand{\tuple}[1]{\ensuremath{\langle #1 \rangle}}
\newcommand{\class}[2]{\ensuremath{\left\{ #1 \,\left|\, #2 \right.\right\}}}
\newcommand{\iso}{\cong}
\newcommand{\into}{\hookrightarrow}
\newcommand{\subs}{\subseteq} 
\newcommand{\sups}{\supseteq} 
\DeclareMathOperator{\fin}{fin}  
\newcommand{\subsetfin}{\ensuremath{\subseteq_{\fin}}} 
\newcommand{\finsub}{\subsetfin}
\newcommand{\subsfg}{\ensuremath{\subs_{f.g.}}}
\newcommand{\fgsub}{\subsfg}
\newcommand{\minus}{\ensuremath{\smallsetminus}}
\newcommand{\powerset}{\ensuremath{\mathcal{P}}} 
\newcommand{\strong}{\ensuremath{\lhd}} 
\newcommand{\nstrong}{\ensuremath{\not\kern-4pt\lhd\;}} 
\newcommand{\gen}[1]{\ensuremath{\langle #1 \rangle}}   
\newcommand{\hull}[1]{\ensuremath{\lceil #1\rceil}}
\newcommand{\cross}{\ensuremath{\times}}
\newcommand{\ra}[3]{\ensuremath{#1 \stackrel{#2}{\longrightarrow} #3}}
\newcommand{\map}[5]{
\begin{eqnarray*}
#1 & \stackrel{#2}{\longrightarrow} & #3\\ #4 & \longmapsto & #5
\end{eqnarray*}}
\newcommand{\leteq}{\mathrel{\mathop:}=}
\newtheorem{prop}{Proposition}[section]
\newtheorem{cor}[prop]{Corollary}
\newtheorem{theorem}[prop]{Theorem}
\newtheorem{lemma}[prop]{Lemma}
\newtheorem*{thm}{Theorem}
\theoremstyle{definition}
\newtheorem{defn}[prop]{Definition}
\title{The theory of the exponential differential equations of
  semiabelian varieties}
\author{Jonathan Kirby\thanks{University of Oxford and University of
    Illinois at Chicago, supported by the EPSRC fellowship EP/D065747/1}}
\date{Version 3.1, \today}
\begin{document}

\maketitle

\begin{abstract}
  The complete first order theories of the exponential differential
  equations of semiabelian varieties are given. It is shown that these
  theories also arise from an amalgamation-with-predimension
  construction in the style of Hrushovski. The theories include
  necessary and sufficient conditions for a system of equations to
  have a solution. The necessary conditions generalize Ax's
  differential fields version of Schanuel's conjecture to semiabelian
  varieties.  There is a purely algebraic corollary, the ``Weak CIT''
  for semiabelian varieties, which concerns the intersections of
  algebraic subgroups with algebraic varieties.
\end{abstract}

\tableofcontents

\section{Introduction}

\subsection{The exponential differential equation}

Let $\tuple{F;+,\cdot,D}$ be a differential field of characteristic zero, and consider the \emph{exponential differential equation} $Dx = \frac{Dy}{y}$. If $F$ is a field of meromorphic functions in a variable $t$, with $D$ being $\frac{d}{dt}$, then this is the differential equation satisfied by any $x(t), y(t) \in F$ such that $y(t) = e^{x(t)}$.

James Ax proved the following differential fields version of Schanuel's conjecture.
\begin{theorem}[Ax, \cite{Ax71}]\label{Ax theorem}
  Let $F$ be a field of characteristic zero, $D$ be a derivation on
  $F$ and $C$ be the constant subfield. Suppose $n \ge 1$ and
  $x_1,y_1,\ldots,x_n,y_n \in F$ are such that $Dx_i =
  \frac{Dy_i}{y_i}$ for each $i$, and the $Dx_i$ are $\Q$-linearly
  independent. Then $\td(x_1,y_1,\ldots,x_n,y_n/C) \ge n+1$.
\end{theorem}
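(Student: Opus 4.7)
I would work with the module of Kähler differentials $\Omega(K/C)$, where $K = C(\bar x, \bar y)$. Since the characteristic is zero, $\dim_K \Omega(K/C) = \td(K/C)$, so the goal reduces to exhibiting $n+1$ $K$-linearly independent elements of $\Omega(K/C)$. The natural candidates are $dx_1, \ldots, dx_n$ together with one extra differential such as $dy_j/y_j$ for a suitable $j$.

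\emph{Step 1: Independence of $dx_1, \ldots, dx_n$.} The derivation $D$ factors as $D = D_* \circ d$, where $D_* \colon \Omega(K/C) \to K$ is the $K$-linear map sending $df$ to $Df$. A putative relation $\sum_i a_i\, dx_i = 0$ in $\Omega(K/C)$ gives $\sum_i a_i\, Dx_i = 0$ in $K$. The crucial ingredient is an Ax-style descent lemma saying that such a $K$-linear relation can be refined to a $C$-linear one; combined with the $\Q$-linear independence of the $Dx_i$ this forces every $a_i$ to be zero. I would prove the descent lemma by induction on $n$ using that $\ker d = C$.

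\emph{Step 2: An $(n+1)$-st independent element.} Suppose for contradiction that every $dy_j/y_j$ lies in the $K$-span of the $dx_i$, say $dy_j/y_j = \sum_i a_i\, dx_i$. Applying $D_*$ and using $Dy_j/y_j = Dx_j$ gives $Dx_j = \sum_i a_i\, Dx_i$; the descent lemma and $\Q$-linear independence then force $dy_j/y_j = dx_j$ in $\Omega(K/C)$. Geometrically, the pullback of the invariant one-form $dx - dy/y$ on $\ga \times \gm$ along the map $\mathrm{Spec}\,K \to \ga \times \gm$ given by $(x_j, y_j)$ vanishes; since this form is non-trivial on any one-parameter subgroup of the form $y = \exp x$, the image must have dimension zero, forcing $x_j, y_j \in C$ and hence $Dx_j = 0$, contradicting the hypothesis.

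\emph{Main obstacle.} The technical heart is the descent-to-$C$ lemma in Step 1. I would approach it inductively: choose $x_n$ so that $Dx_n$ is not in the $\Q$-span of $Dx_1, \ldots, Dx_{n-1}$, then iteratively apply $D$ to the $a_i$ to reduce them modulo $C$. The induction has to be organized so that Steps 1 and 2 interlock, which is why Ax presents the whole argument as a single coherent induction rather than two independent steps. A subtlety I would need to watch is that the geometric finale in Step 2 requires an algebraic substitute for the analytic fact that $dy/y - dx$ has no algebraic primitive on $\ga \times \gm$, which is ultimately what makes the theorem non-trivial.
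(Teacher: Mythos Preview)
Your Step~1 is false as stated, and the descent lemma you propose does not hold. Take $F = C(t)$ with $D = d/dt$, $x_1 = t$, $x_2 = t^2$, and choose $y_1, y_2$ with $Dy_i/y_i = Dx_i$ (for instance in a field of formal power series). Then $Dx_1 = 1$ and $Dx_2 = 2t$ are $\Q$-linearly independent, yet $dx_2 = 2t\,dx_1$ in $\Omega(K/C)$, so $dx_1, dx_2$ are $K$-linearly dependent. The relation $2t\cdot Dx_1 - Dx_2 = 0$ admits no refinement to $C$-coefficients, so the descent lemma fails too. The hypothesis of Ax's theorem places no constraint on algebraic relations among the $x_i$ alone; it is only the combination with the $y_i$ that forces high transcendence degree.

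The paper's proof (Proposition~\ref{Main Schanuel prop} and Theorem~\ref{Schanuel property}) works instead with the differentials $\omega_i = dy_i/y_i - dx_i$. These have two crucial features your candidates $dx_i$ lack: they all lie in the kernel of $D^* : \Omega(E/C)\otimes_E F \to F$ (precisely because $Dx_i = Dy_i/y_i$), and they are \emph{closed}. The first feature gives a dimension count: $\ker D^*$ has $F$-dimension $\td(x,y/C) - 1$, so if $\td < n+1$ the $\omega_i$ must be $F$-linearly dependent. The second feature is what makes the descent work: applying the Lie derivative $L_D = D^*d + dD^*$ to a minimal dependence $\sum \alpha_i \omega_i = 0$ annihilates both terms (one by closedness, one by $(x,y)\in\Gamma$), yielding $\sum (D\alpha_i)\omega_i = 0$ and hence $\alpha_i \in C$ by minimality. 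Your $dx_i$ are closed but not in $\ker D^*$, so the Lie derivative trick does not apply to them. After the descent, the paper still needs a genuine group-theoretic step (Chevalley's indecomposability theorem and Lemma~\ref{subgroups of TS}) to pass from a $C$-linear relation among the $\omega_i$ to a $\Q$-linear relation among the $Dx_i$; your Step~2 gestures at this but the ``algebraic substitute'' you anticipate needing is precisely this subgroup argument.
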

 Here and throughout this paper, $\td(X/C)$ means the transcendence degree of the field extension $C(X)/C$. 

 The theorem can be viewed as giving a restriction on the systems of equations which have solutions in a differential field. In this paper it is shown that Ax's theorem is the only restriction on a system of instances of the exponential differential equation and polynomial equations having solutions in a differential field. This is done by proving a matching \emph{existential closedness} theorem, stating that certain systems of equations do have solutions when the field $F$ is differentially closed. The theorem says roughly that certain algebraic subvarieties $V \subs F^{2n}$, whose images under projections are suitably large and which we call \emph{rotund subvarieties}, must have nonempty intersection with the set of $n$-tuples of solutions of the exponential differential equation.

\subsection{Semiabelian varieties}

 We consider not just the exponential differential equation given above, but also the exponential differential equations of every semiabelian variety defined over the field of constants. Next we explain what these equations are, and the language we use to study them. The foundations of algebraic geometry we use are those standard in model theory, which can be found in \cite{Pillay98} or in \cite{MarkerMK}. In particular, we generally work with a fixed algebraically closed field $F$ (always of characteristic zero in this paper), and identify a variety over $F$ with its set of $F$-points, although we may write the latter also as $V(F)$. If $K$ is another field with $K \subs F$ or $F \subs K$ then we write $V(K)$ for the $K$-points of $V$.

 In characteristic zero, we can define a \emph{semiabelian variety} $S$ to be a connected, commutative algebraic group, with no algebraic subgroup isomorphic to the additive group $\ga$. By Chevalley's theorem \cite[p40]{Serre88}, every algebraic group $G$ can be given as an extension
 \[0 \to L \to G \to A \to 0\]
 in a unique way, where $A$ is an abelian variety (a connected projective algebraic group) and $L$ is a linear group. If $G$ is connected and commutative and the characteristic is zero then $L$ is of the form $\ga^l \cross \gm^k$ for some natural numbers $l$ and $k$ \cite[p40, p171]{Serre88}. So $G$ is a semiabelian variety when, in addition, $l = 0$. Special cases include the multiplicative group $\gm$, its powers which are called algebraic tori, and elliptic curves which are one-dimensional abelian varieties. Algebraic groups isomorphic to $\ga^n$ for some natural number $n$ are called \emph{vector groups}.

We need the notions of tangent bundles and the logarithmic derivative map. A good exposition is given in \cite{MarkerMK}, so we just summarize the essential properties we need. Given any connected commutative algebraic group $G$, its tangent bundle $TG$ is also a connected commutative algebraic group. We write $LG$ for the tangent space at the identity of $G$. (The $L$ here stands for Lie algebra, but since the group is commutative, the Lie bracket is trivial.) $LG$ is a vector group with $\dim LG = \dim G$, and $TG$ is canonically isomorphic to $LG \cross G$ as an algebraic group.

 For any differential field $\tuple{F;+,\cdot,D}$ and any commutative algebraic group $G$ defined over the subfield of constants $C$, there is a \emph{logarithmic derivative map}, which is a group homomorphism $\logd_G : G(F) \to LG(F)$. 

 If $G$ is a vector group then $LG$ is canonically isomorphic to $G$. In particular, for any $G$, $LLG$ is canonically isomorphic to $LG$. We have $\logd_\ga(x) = Dx$ and, identifying $L\gm$ with $\ga$ we have $\logd_\gm(y) = \frac{Dy}{y}$, so the usual exponential differential equation can be written as
\[ \logd_{L\gm}(x) = \logd_{\gm}(y).\]

For a general semiabelian variety $S$, defined over the field of constants $C$, we define the \emph{exponential differential equation of $S$} to be 
\[ \logd_{LS}(x) = \logd_{S}(y) \]
under the canonical identification of $LLS$ and $LS$. The equation defines a differential subvariety of $TS$, which we denote by $\Gamma_S$. That is,
\[\Gamma_S = \class{(x,y) \in LS \cross S}{\logd_{LS}(x) = \logd_{S}(y)}.\]

As mentioned earlier, the usual exponential map satisfies the exponential differential equation of $\gm$. If $S$ is a complex semiabelian variety, we may consider $S(\C)$ as a complex Lie group, and $LS(\C)$ can be identified with its universal covering space. The analytic covering map
\[LS(\C) \stackrel{\exp_S}{\longrightarrow} S(\C)\]
is called the \emph{exponential map} of $S(\C)$, and it can be shown via a Lie theory argument that this map satisfies the exponential differential equation for $S$. This is one motivation for considering these equations.

 Having explained the equations under consideration, we now explain the context in which we study them. Let $\tuple{F;+,\cdot,C,D}$ be a differential field of characteristic 0, with $C$ being the constant subfield. Let $C_0$ be a countable subfield of $C$, and let $\Ss$ be a collection of semiabelian varieties, each defined over $C_0$. Expand $F$ by adding a symbol for $\Gamma_S$ for each $S \in \Ss$ (of appropriate arity to be interpreted as a subset of $TS$) and by adding constant symbols for each element of $C_0$. Then forget the deriviation -- consider the reduct $\tuple{F;+,\cdot,C,(\Gamma_S)_{S
    \in\Ss}, (\hat{c})_{c\in C_0}}$. We call this language $\L_\Ss$.

We will give the complete first-order theory of this reduct, in the case where $\tuple{F;+,\cdot,D}$ is a differentially closed field.

\subsection{Outline of the paper}

 In section~\ref{amalg section} of the paper we take the analogues for semiabelian varieties of Ax's theorem (see below) as a starting point. We observe that they can be seen as stating the positivity of a \emph{predimension function}, as used by Hrushovski \cite{Hru93} to construct his new strongly minimal theories -- theories where there is a particularly simple and powerful dimension theory. This is easiest to see in the original multiplicative group setting. Write $x,y$ for the tuples $x_1,\ldots,x_n$ and $y_1,\ldots,y_n$ in theorem~\ref{Ax theorem}, and define
 \[\delta(x,y) = \td(x,y/C)- \ldim_\Q(x/C)\]
 where the second term is the $\Q$-linear dimension of the span of the images of the $x_i$ in the quotient \Q-vector space $F/C$. Then Ax's theorem is equivalent to the statement that for all tuples $x,y \in F^n$ satisfying the exponential differential equation, either $\delta(x,y) \ge 1$ or all the $x_i$ and $y_i$ lie in $C$.

 Using this predimension function, and its generalisations, we construct abstract $\L_\Ss$-theories $T_\Ss$ via a category-theoretic version of Hrushovski's \emph{amalgamation with predimension} technique.  In particular, we obtain a pregeometry with its associated notion of dimension, and the definition (see \ref{rotund defn}) of the rotund subvarieties of the tangent bundles $TS$, which are those occuring in the existential closedness statements. However, we cannot at this stage of the paper show that $T_\Ss$ is first-order axiomatizable.

 Section~\ref{diff fields section} starts by connecting the logarithmic derivatives with differential forms, and goes on to prove that the analogues of Ax's theorem (which we call \emph{Schanuel properties}) do indeed hold in all differential fields. As in Ax's paper, we prove a statement for many commuting derivations. The simpler statement for just one derivation is as follows.
 \begin{thm}[\ref{Schanuel property}, the Schanuel Property, one
   derivation version]
   Let $F$ be a differential field of characteristic zero, with
   constant subfield $C$. Let $S$ be a semiabelian variety defined over
   $C$, of dimension $n$.

   Suppose that $(x,y) \in \Gamma_S$ and $\td(x,y/C) < n + 1$. Then
   there is a proper algebraic subgroup $H$ of $S$ and a constant point
   $\gamma$ of $TS$ such that $(x,y)$ lies in the coset $\gamma \cdot
   TH$.
 \end{thm}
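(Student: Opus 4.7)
The plan is to adapt Ax's original proof, which handled the case $S = \gm^n$, to a general semiabelian variety by systematically replacing the multiplicative logarithmic form $\frac{dy}{y}$ with the pullback of an invariant $1$-form on $S$. First I would fix a $C$-basis $\omega_1, \ldots, \omega_n$ of the $C$-vector space $\Inv(S)$ of invariant $1$-forms on $S$ and use the canonical identification $LLS \iso LS$ to have these forms simultaneously determine dual linear coordinates $x_1, \ldots, x_n$ on $LS$. The equation $\logd_{LS}(x) = \logd_S(y)$ then rephrases in $\Omega(F/C)$ as $y^*\omega_i = d x_i$; in particular each $y^*\omega_i$ is exact, with primitive $x_i$, when regarded inside $\Omega(K/C)$ for $K := C(x,y)$.

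The central dimension step exploits the identification $\dim_K \Omega(K/C) = \td(K/C) \le n$. The $n$ exact forms $y^*\omega_i$ sit inside this module of dimension at most $n$, so when $\td(K/C) < n$ they are already $K$-linearly dependent; the boundary case $\td(K/C) = n$ requires a more delicate argument. The goal is in either case to produce a nontrivial $C$-linear relation $\sum c_i \omega_i = 0$ of invariant forms on $S$ that becomes zero after pullback by $y$. Following Ax, I would apply derivations of $K$ over $C$ to a $K$-linear relation among the $y^*\omega_i$, exploiting both the exactness of these pullbacks and the good behaviour of invariant forms under the induced Lie-algebraic action. Inducting on $\td(K/C)$ then produces a nontrivial $C$-subspace $W \subs \Inv(S)$ annihilated by $y^*$.

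To extract the subgroup, I would invoke the standard correspondence between $C$-subspaces of $\Inv(S)$ and connected algebraic subgroups of $S$: to a nontrivial such $W$ there corresponds a proper connected subgroup $H \subs S$ with $W$ identified as the image of $\Inv(S/H) \hookrightarrow \Inv(S)$. The vanishing of $y^*W$ combined with exactness of the $y^*\omega_i$ forces the composition $y : \spec F \to S \to S/H$ to be constant, so $y$ lies in a coset $\gamma_1 \cdot H(F)$ with $\gamma_1 \in S(C)$. The corresponding statement for $x \in LS$ follows by differentiating the quotient map $S \to S/H$ and using functoriality of $\logd$, placing $(x,y)$ in a coset $\gamma \cdot TH$ for some constant $\gamma \in TS(C)$.

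The main obstacle is the second paragraph: upgrading the crude $K$-linear dependence, which carries no geometric content on its own, to a genuine $C$-linear dependence among invariant forms, which does correspond to a subgroup. Executing Ax's derivation-based trick in the semiabelian setting requires a careful analysis of how arbitrary $C$-derivations of $K$ act on pullbacks of invariant forms, and I would expect to reduce, via the Chevalley decomposition $0 \to L \to S \to A \to 0$ (with $L$ a torus since $S$ is semiabelian), to separate treatments of the multiplicative and abelian parts, where the behaviour of invariant forms under derivations is governed by known structure theorems. The version for many commuting derivations mentioned in the paper's introduction should then follow by additional bookkeeping without essentially new ideas.
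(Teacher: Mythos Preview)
Your overall architecture---dimension count in K\"ahler differentials, upgrading $K$-linear to $C$-linear dependence via the Lie derivative, then extracting a subgroup---matches the paper's proof of Proposition~\ref{Main Schanuel prop} and Theorem~\ref{Schanuel property}. But two of your steps do not go through as stated.

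First, the condition $(x,y) \in \Gamma_S$ does \emph{not} rephrase as $y^*\omega_i = dx_i$ in $\Omega(F/C)$. It says only that $D^*(\zeta_i(y) - dx_i) = 0$ as elements of $F$, i.e.\ that the forms $\omega_i(x,y) := \zeta_i(y) - dx_i \in \Omega(K/C)$ lie in the annihilator $\Ann(D)$ of the single derivation. This is exactly what resolves your ``boundary case'': the $n$ forms $\omega_i(x,y)$ live not in all of $\Omega(K/C)$ but in the subspace $\Ann(D)$, whose dimension is $\td(K/C) - \rk\Jac(x,y) = \td(K/C) - 1 \le n-1$ (assuming $(x,y) \notin TS(C)$). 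That is the paper's dimension count, and without it the case $\td(K/C)=n$ is genuinely open. Your claim that the $y^*\omega_i$ are exact is false in general and cannot be used downstream.

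Second, and more seriously, there is no ``standard correspondence'' between $C$-subspaces of $\Inv(S)$ and connected algebraic subgroups of $S$. A generic $C$-hyperplane in $\Inv(\gm^2)$, say $\langle \omega_1 - \alpha\,\omega_2\rangle$ with $\alpha \in C$ irrational, corresponds to no algebraic subgroup whatsoever. The $C$-linear relation $\eta = \sum \alpha_i \omega_i$ you obtain from the Lie-derivative step has coefficients in the (algebraically closed) constant field, so it need not cut out anything algebraic. The paper produces the subgroup by a completely different mechanism: it takes the algebraic locus $V = \loc_C(x,y)$, translates to pass through the identity, and applies Chevalley's indecomposability theorem to obtain an algebraic subgroup $G \subseteq TS$ generated by (the orbit of) the translated point; since $\eta$ is a group homomorphism $TS \to \Omega(F/C)$ vanishing on that orbit, $G \subseteq \ker\eta$ is proper. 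Then Lemma~\ref{subgroups of TS} (algebraic subgroups of $TS = LS \times S$ split as products, using that $S$ is semiabelian) gives $G = J \times H$ with $H$ a proper algebraic subgroup of $S$. Your proposed reduction via the Chevalley short exact sequence $0 \to L \to S \to A \to 0$ is not needed and would not by itself supply the missing step.
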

 We also prove that the existential closedness axioms hold in differentially closed fields.
 \begin{thm}[\ref{reduct satisfies EC}, Existential Closedness]
  Let $F$ be a differentially closed field of characteristic zero, and $S$ a semiabelian variety defined over $C$. Then for each irreducible rotund subvariety $V$ of $TS$, and each parametric family $(W_e)_{e \in Q(C)}$ of proper subvarieties of $V$, with $Q$ a constructible set defined over $C$, there is $g \in \Gamma_S \cap V \minus \bigcup_{e \in Q(C)} W_e$.
 \end{thm}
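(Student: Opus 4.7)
The plan is to apply the standard strategy for proving existential closedness in $\mathrm{DCF}_0$: construct a point of $V \cap \Gamma_S$ avoiding the $W_e$ in some differential field extension of $C$, and then transfer it back to $F$ using differential closedness and $\aleph_0$-saturation. The conclusion that there exists $g \in \Gamma_S \cap V$ with $g \notin W_e$ for all $e \in Q(C)$ is expressible by a single first-order formula over $C$ in the language of differential rings (since the $W_e$ are a parametric family, the condition ``not in any $W_e$'' is universal over the parameter $e$ ranging in the constructible $Q(C)$), so it suffices to show that the corresponding partial type over $C$ is consistent with the theory of differential fields.

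To produce such a realization, I would take $(a,b)$ to be a generic point of $V$ over $C$, generating the function field $K_0 = C(V)$, a finitely generated field extension of $C$ of transcendence degree $\dim V$. The aim is to extend the trivial derivation on $C$ to a derivation $D$ on $K_0$ so that $(a,b) \in \Gamma_S$. Since $\Gamma_S$ is cut out inside $TS = LS \times S$ by $\logd_{LS}(x) = \logd_S(y)$, choosing such a $D$ amounts to choosing a tangent vector $(u,v) \in T_{(a,b)}V$ satisfying the linear constraint that $u \in LS$ equal the canonical image of $v \in T_b S$ in $LS$ under left-translation by $b^{-1}$; the derivation is then obtained by setting $Da = u$ and $Db = v$ at the generic point. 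By irreducibility of $V$ and properness of each $W_e$, the generic $(a,b)$ automatically lies outside every $W_e$.

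The main obstacle is ensuring that $D$ can be chosen so that the constant field of $K_0$ is exactly $C$ --- otherwise $(a,b)$ would collapse into a constant subvariety and fail to realize the intended type. This is where rotundity of $V$ enters decisively: the definition of rotund subvariety (Definition \ref{rotund defn}) is designed precisely so that for every algebraic subgroup $H \leq S$, the image of $T_{(a,b)}V$ in the tangent space to the quotient $T(S/H)$ is large enough, subject to the $\Gamma_S$-constraint, for a generic choice of $(u,v)$ to induce a non-trivial derivation on every proper quotient $S/H$. I expect to verify this by induction on $\dim S$: the projection of $V$ to $T(S/H)$ again inherits rotundity, while any failure of the constants to equal $C$ would place $(a,b)$ inside a coset of $TH$ for some proper algebraic subgroup $H \leq S$, contradicting the combination of rotundity and genericity (exactly the configuration ruled out, at the level of points, by the contrapositive of the Schanuel property of Theorem \ref{Schanuel property}). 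Once $D$ is so constructed, the consistent partial type is realized in $F$ by differential closedness and saturation, yielding the desired $g$.
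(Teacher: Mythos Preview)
Your overall strategy---build a solution in a differential field extension and transfer it down---is correct, and invoking the contrapositive of the Schanuel property to guide the construction is the right instinct. But two genuine gaps remain.

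First, $V$ need not be defined over $C$; in general it carries parameters from $F$, so the sentence you want to realise is over $F$, not over $C$. You therefore cannot take a generic point over $C$ and extend the \emph{trivial} derivation: you must take $(x,y)$ generic in $V$ over $F$ and extend the given derivation $D_0$ on $F$ to $K = F(x,y)^{\alg}$. The paper carries this out by working in the affine space $\Der(K/D_0)$ of derivations on $K$ restricting to a scalar multiple of $D_0$ on $F$, and uses the differential-forms calculation (the contrapositive of Proposition~\ref{Main Schanuel prop}) to locate the required $D$.

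Second, the constant-field step is where the real difficulty lies, and your sketch (``induction on $\dim S$, rotundity of projections'') does not close it. The paper first reduces, via Proposition~\ref{EC' implies EC}, to the case of a \emph{perfectly} rotund $V$, so that $\dim V = \dim S$ and the extending derivation $D$ is unique. The key to showing the constant field $C_K$ of $(K,D)$ equals $C$ is not an induction but the observation that $F$, being differentially closed, is an elementary substructure of the differential closure $K'$ of $K$; hence the inclusion $F \hookrightarrow K'$ is a \emph{self-sufficient} embedding of $\L_\Ss$-reducts. Setting $F' = (C_K \cup F)^{\alg}$ one has $F' \strong K$, and comparing $\dim H \le \td(x,y/F')$ (from self-sufficiency) with $\td(x,y/F') \le \dim(V \cap \gamma\cdot TH)$ against the strict inequality $\dim(V\cap\gamma\cdot TH) < \dim H$ forced by perfect rotundity yields $H = S$, hence $C_K = C$. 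Without the reduction to perfect rotundity and without the self-sufficiency of $F$ in $K'$, your argument cannot exclude new constants arising from the interaction of the chosen $D$ with the $F$-parameters of $V$.
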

This theorem extends work of Crampin \cite{Crampin}, who considered a case where the variety $V$ is defined over the constant subfield, just for the multiplicative group.

 In section~\ref{fot section} we apply the compactness theorem of first-order logic to the Schanuel properties proved in section~\ref{diff fields section} to show that they are first-order expressible, and to deduce a result in diophantine geometry, concerning the intersections of algebraic subgroups of semiabelian varieties with algebraic varieties.
\begin{thm}[\ref{weak CIT}, ``Weak CIT'' for semiabelian varieties]
  Let $S$ be a semiabelian variety defined over an algebraically
  closed field $C$ of characteristic zero. Let $(U_p)_{p \in P}$ be a
  parametric family of algebraic subvarieties of $S$. There is a
  finite family $\Jf{U}$ of proper algebraic subgroups of $S$ such
  that, for any coset $\kappa = a \cdot H$ of any algebraic subgroup
  $H$ of $S$ and any $p \in P(C)$, if $X$ is an irreducible component
  of $U_p \cap \kappa$ and
  \[\dim X = (\dim U_p + \dim \kappa - \dim S) + t\]
  with $t>0$, an atypical component of the intersection, then there is
  $J \in \Jf{U}$ of codimension at least $t$ and $s \in S(C)$ such
  that $X \subs s \cdot J$.
\end{thm}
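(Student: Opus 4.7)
The plan is to deduce this diophantine-geometric statement from the Schanuel Property (Theorem~\ref{Schanuel property}) via a compactness argument in the language $\L_\Ss$. The bridge is the observation that atypical intersections of the $U_p$ with cosets of algebraic subgroups produce points of $\Gamma_S$ with anomalously low transcendence degree over the constants, which the Schanuel Property then forces into cosets of proper algebraic subgroups.

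Setup in a differential field: embed $C$ into a sufficiently saturated differentially closed field $F$ with constant field $C$, and let $X$ be an atypical component of $U_p \cap (a \cdot H)$, where $a \in S(C)$ and $H$ is an algebraic subgroup of $S$ defined over $C$. Let $x$ be a point generic in $X$ over $C$. Because $a$ is constant and $H$ is defined over constants, $\logd_S(x) = \logd_H(a^{-1}\cdot x) \in LH$, so the pair $(\logd_S(x), x)$ lies in $\Gamma_S \cap (LH \times (a\cdot H))$ as a subset of $TS$. Its transcendence degree over $C$ is at most $\dim X + \dim H$, and the atypicality excess $t>0$ is what pushes this below the Schanuel threshold $\dim S + 1$ (after reducing to the minimal such $H$ for which $X$ lies in an $H$-coset).

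Apply the Schanuel Property: there is a proper algebraic subgroup $H'$ of $S$ and a constant point $\gamma \in TS(C)$ with $(\logd_S(x), x) \in \gamma \cdot TH'$. Projecting to $S$ yields $X \subs s \cdot H'$ for some $s \in S(C)$. To upgrade this to the quantitative bound $\codim H' \ge t$, one invokes the full predimension inequality (in the many-commuting-derivations form) established in Section~\ref{diff fields section}: the atypicality excess $t$ corresponds exactly to the predimension deficit of $(\logd_S(x), x)$, which numerically lower-bounds $\codim H'$.

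Finally, compactness delivers the finite family $\Jf{U}$. The preceding step associates to each atypical component some proper algebraic subgroup of $S$ of codimension at least $t$, drawn from the countable collection of algebraic subgroups of $S$ defined over $C$. A compactness argument applied to the parametric family of formulas asserting ``$X$ is an atypical component of $U_p \cap \kappa$ with excess at least $t$, not contained in any coset of $J_1,\ldots,J_N$'' produces the required finite $\Jf{U}$. The main obstacle is the quantitative step: the Schanuel Property as stated yields only that $H'$ is proper, whereas we need $\codim H' \ge t$. Bridging this gap requires the underlying predimension positivity from Section~\ref{amalg section}, which supplies the precise numerical deficit rather than a mere dichotomy.
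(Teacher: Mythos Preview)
Your overall strategy---Schanuel property plus compactness---is the paper's, but the compactness step as you sketch it does not go through, and the quantitative step is only gestured at.

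The essential gap is uniformity in $H$. Your compactness argument must range over all cosets $\kappa = a\cdot H$ of \emph{all} algebraic subgroups $H$ of $S$, but these do not form a parametric family (a semiabelian variety has no infinite definable family of algebraic subgroups), so the ``parametric family of formulas'' you describe is not available, and compactness cannot be applied uniformly in $H$. The paper's device is to shift the non-uniformity to the $LS$ side, where it disappears: the algebraic subgroups of the vector group $LS$ \emph{do} form a parametric family $\Lambda_{Ma} = \{x \in LS : Mx = a\}$. Since $y \in a\cdot H$ forces the $\Gamma_S$-partner of $y$ into a constant coset of $LH$, the pair lies in $\Lambda_{Ma} \times U_p$ for suitable $(M,a)$. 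One then applies the already-proved Uniform Schanuel Property (Theorem~\ref{semiabelian USC}) to the single parametric family $\Lambda \times U$ and takes $\Jf{U} = \mathcal{H}_{\Lambda \times U}$; no further compactness is needed.

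The codimension bound also does not follow from the one-derivation Schanuel property via your estimate $\td \le \dim X + \dim H$: that only yields a proper subgroup when $\dim X + \dim H \le \dim S$, which need not hold, and in any case gives no control on the codimension. The paper instead takes $r = \dim X$ derivations $D_1,\ldots,D_r$ forming a basis of $\Der(C(y)/C)$, so that $\rk\Jac(y) = \dim X$; then with $V_{Ma,p} = \Lambda_{Ma}\times U_p$ one has $\dim V_{Ma,p} - \rk\Jac(x,y) = (\dim H + \dim U_p) - \dim X = \dim S - t$, and Theorem~\ref{semiabelian USC} delivers a subgroup of codimension at least $t$ directly. (A minor technical point: $(\logd_S(y),y)$ is not in $\Gamma_S$ in general, since membership requires $\logd_{LS}(u) = \logd_S(y)$, i.e.\ $Du = \logd_S(y)$; one must take an antiderivative, as the paper does by passing to a differential field extension.)
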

This is a weak version of the \emph{Conjecture on the intersection of algebraic subgroups with subvarieties} stated by Zilber in \cite{Zilber02esesc}, and is the natural generalization to
semiabelian varieties of the version proved there for algebraic tori. (``CIT'' stands for the Conjecture on Intersections with Tori.) For a discussion of results and conjectures of this form, see \cite{Zilber02esesc} and \cite{BMZ07}.

We then use this weak CIT result to show that the notion of rotundity of a subvariety is definable, and hence that the existential closedness property is first-order expressible. Thus the theories $T_\Ss$ are first-order, and we then show they are complete. Finally, we give two simple model-theoretic properties of the $T_\Ss$.

I believe the results of this paper can be generalised to arbitrary
commutative algebraic groups, although vector groups must be treated
separately because their exponential maps are just the identity maps.
Indeed, Bertrand \cite{Bertrand06} has proved the Schanuel property
for commutative algebraic groups with no vectorial quotients, a
generalization of semiabelian varieties. He makes use of another paper
of Ax \cite{Ax72a}, and considers only the case where the differential
field is a field of meromorphic functions. The method of \cite{ASCWE} and of \S5.5 of \cite{DPhil}
generalizes Bertrand's result to any differential field. In these cases, the groups are still defined
over the constant field $C$ (or, essentially equivalently, are
isoconstant). Bertrand and Pillay have also considered Schanuel properties in the
non-isoconstant case \cite{BP08}.

Much of the work of this paper was done as part of my DPhil thesis
\cite{DPhil} under the supervision of Boris Zilber, and his great
influence will be clear to anyone who knows his work.

\section{Amalgamation}\label{amalg section}

In this section we put aside differential fields and construct an
abstract $\L_\Ss$-structure and its theory $T_\Ss$. In
section~\ref{diff fields section} we show that the reducts of
differentially closed fields are models of $T_\Ss$. It is not
immediate that $T_\Ss$ is first-order axiomatizable, but this is
proven in section~\ref{fot section}. We start by giving the universal
part of $T_\Ss$.

\subsection{The universal theory}\label{universal theory}

 Fix a countable field $C_0$ of characteristic zero, and a collection $\Ss$ of semiabelian varieties, each defined over $C_0$. We assume also that $C_0$ is large enough that every algebraic homomorphism between any members of $\Ss$ is defined over $C_0$. For example, if $\Ss$ is the collection $\class{\gm^n}{n \in \N}$ of algebraic tori, then we can just take $C_0 = \Q$. In any case it suffices to take $C_0$ to be algebraically closed. Recall that the language $\L_\Ss$ is $\tuple{+,\cdot,C,(\Gamma_S)_{S \in\Ss}, (\hat{c})_{c\in C_0}}$, the field language augmented by relation symbols for the constant field and for each solution set $\Gamma_S$, and by constant symbols for the elements of $C_0$. The theory $T_\Ss^U$ is given as follows.
\begin{enumerate}
  \item[\textup{U1}] $F$ is an algebraically closed field, $C$ is a
    (relatively) algebraically closed subfield, and the subfield $C_0$
    of $C$ is named by parameters.
  \item[\textup{U2}] For each $S\in\Ss$, $\Gamma_S$ is a subgroup of
    $TS$.
  \item[\textup{U3}] For each $S\in\Ss$, $TS(C) \subs \Gamma_S$ 
  \item[\textup{U4}] $(0, y) \in \Gamma_S \iff y \in S(C)$ and $(x,
    1) \in \Gamma_S \iff x \in LS(C)$, where $0$ is the identity of
    $LS$ and $1$ is the identity of $S$.
  \item[\textup{U5}] If $\ra{S_1}{f}{S_2}$ is an algebraic group
    homomorphism then $(Tf)(\Gamma_{S_1}) \subs \Gamma_{S_2}$, and
    if $f$ is an isogeny then also $\Gamma_{S_1} =
    (Tf)^{-1}(\Gamma_{S_2})$.
  \item[\textup{U6}] For each $S_1, S_2 \in \Ss$, if $S_1 \subs S_2$
    then $\Gamma_{S_1} = \Gamma_{S_2} \cap TS_1$.
  \item[\textup{U7}] For each $S_1, S_2 \in \Ss$, $\Gamma_{S_1
      \cross S_2} = \Gamma_{S_1} \cross \Gamma_{S_2}$.
  \item[\textup{SP}] For each $S \in \Ss$, if $g \in \Gamma_S$ and
    $\td(g/C) < \dim S + 1$ then there is a proper algebraic subgroup
    $H$ of $S$ and $\gamma \in TS(C)$ such that $g$ lies in the coset
    $\gamma\cdot TH$.
\end{enumerate}

\begin{lemma}\label{U axioms are first order}
  The axioms U1 --- U7 can all be expressed as first order axiom
  schemes in the language $\L_\Ss$.
\end{lemma}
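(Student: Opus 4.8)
The plan is to observe that, once we set aside the new symbols $C$ and the $\Gamma_S$ themselves, every geometric ingredient named in U1--U7 is a constructible set, or a morphism with constructible graph, defined over $C_0$. Indeed each $S\in\Ss$ is defined over $C_0$, so the same is true of its tangent bundle $TS$, of the group operation and the identity element of $TS$, of the canonical isomorphism $TS\iso LS\cross S$, and --- whenever $S_1\subs S_2$ with $S_1,S_2\in\Ss$ --- of the closed immersion $TS_1\into TS_2$; and for any algebraic group homomorphism $f\colon S_1\to S_2$ between members of $\Ss$ the induced map $Tf\colon TS_1\to TS_2$ is again defined over $C_0$, by our standing assumption on $C_0$. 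Likewise $T(S_1\cross S_2)\iso TS_1\cross TS_2$ canonically and over $C_0$. I would begin by fixing, once and for all, a presentation of each $TS$ as a locally closed subvariety of a fixed affine space over $C_0$ (covering $S$, if it is not affine, by finitely many affine charts over $C_0$), and let $\Gamma_S$ be the relation symbol of the resulting arity. Then, by quantifier elimination for $\ACF$, in any field $F\sups C_0$ each of the constructible sets above --- and the graph of each of the morphisms above --- is cut out, uniformly across all models of U1, by a quantifier-free formula in $+,\cdot$ with parameters among the constant symbols $\hat c$, $c\in C_0$. In particular the relations ``$z\in TS$'', ``$w$ is the product of $z_1$ and $z_2$ in $TS$'', ``$w=Tf(z)$'', ``$z\in TS_1\subs TS_2$'', and ``$w_1,w_2$ are the images of $z$ under $T(S_1\cross S_2)\iso TS_1\cross TS_2$'' are all available as $\L_\Ss$-formulas that mention neither $C$ nor any $\Gamma_S$.

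Granting this, each clause rewrites directly as a first-order scheme. Axiom U1 is the union of: the $\ACF$ axiom scheme for $\tuple{F;+,\cdot}$; the sentences saying that the set named by the predicate $C$ is closed under $+$, $\cdot$, negatives and multiplicative inverses, hence a subfield; the scheme, one sentence per $n\ge1$, saying that every root in $F$ of a monic degree-$n$ polynomial with coefficients in $C$ already lies in $C$, which expresses relative algebraic closedness; and the atomic diagram of $C_0$ together with the sentences $C(\hat c)$ for $c\in C_0$. Axioms U2, U3, U4 are, for each $S\in\Ss$, single sentences: U2 says $\Gamma_S\subs TS$, contains the identity of $TS$, and is closed under the definable group operation and inversion of $TS$; U3 and U4 quantify over the definable point sets $TS$, $S$, $LS$ and apply the predicate $C$ coordinatewise. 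Axiom U5 becomes, for each algebraic group homomorphism $f\colon S_1\to S_2$ between members of $\Ss$, the sentence $\forall z\,\forall w\,(\Gamma_{S_1}(z)\wedge w=Tf(z)\to\Gamma_{S_2}(w))$, together with, when $f$ is an isogeny, its converse $\forall z\,\forall w\,(w=Tf(z)\wedge\Gamma_{S_2}(w)\to\Gamma_{S_1}(z))$; since $(Tf)^{-1}(\Gamma_{S_2})=\{z\in TS_1\mid Tf(z)\in\Gamma_{S_2}\}$, these two implications together express $\Gamma_{S_1}=(Tf)^{-1}(\Gamma_{S_2})$. Axioms U6 and U7 are, for each pair $S_1,S_2\in\Ss$, the single sentences $\forall z\,(\Gamma_{S_1}(z)\leftrightarrow(\Gamma_{S_2}(z)\wedge z\in TS_1))$ (when $S_1\subs S_2$) and $\forall z\,(\Gamma_{S_1\cross S_2}(z)\leftrightarrow(\Gamma_{S_1}(\pr_1 z)\wedge\Gamma_{S_2}(\pr_2 z)))$, where $\pr_1,\pr_2$ are the definable projections of the canonical identification $T(S_1\cross S_2)\iso TS_1\cross TS_2$.

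I expect the real work to lie in the first paragraph: making precise what ``$z\in TS$'' and the graphs of the group law, of the isomorphisms $TS\iso LS\cross S$ and $T(S_1\cross S_2)\iso TS_1\cross TS_2$, and of the tangent maps $Tf$ actually are as $\L_\Ss$-formulas when $S$ --- and hence $TS$ --- fails to be affine, the abelian part of $S$ being projective. This forces one to commit to a definite affine atlas over $C_0$, to write all the structure maps in those local coordinates, and to check that the resulting subsets of affine space are constructible and defined over $C_0$; once that bookkeeping is in place, quantifier elimination turns every clause literally into a first-order sentence or scheme. One should also note that U5 is in general an infinite scheme --- one axiom per algebraic group homomorphism between members of $\Ss$, of which there may be infinitely many --- but this is harmless, as a first-order theory may have infinitely many axioms.
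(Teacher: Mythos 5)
Your argument is correct and follows the same route as the paper, which simply remarks that the claim is ``almost immediate'' and singles out U5 for the observation that every homomorphism between members of $\Ss$ is defined over $C_0$, hence $\emptyset$-definable in $\L_\Ss$. You have supplied the bookkeeping (the $C_0$-rational affine atlas, quantifier elimination for ACF, and the clause-by-clause rewriting of U1--U7) that the paper leaves implicit.
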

\begin{proof}
  This is almost immediate. For U5, recall that by assumption on $C_0$, every algebraic homomorphism $\ra{S_1}{f}{S_2}$ is defined over $C_0$, and
  hence is $\emptyset$-definable in $\L_\Ss$.
\end{proof}
The last axiom, SP, is the Schanuel property. Since each $S\in \Ss$
has only countably many proper algebraic subgroups and there are only
countably many polynomials, it follows that SP can be expressed as a
sentence in the infinitary language $\Loo$. We show later
(corollary~\ref{SP is first order}) that SP can also be expressed as a
first order axiom scheme.

The superscript ``U'' in $T_\Ss^U$ stands for universal. The theory is
universal, that is, if $M$ is a model and $N$ is a substructure of $M$
then $N$ is also a model, with the exception of the part of U1 that
says that the field $F$ is algebraically closed. It will be convenient
to work in a setting in which we only consider substructures whose
underlying field is algebraically closed. In this non-elementary
setting, the theory $T_\Ss^U$ is precisely the ``theory of
substructures''.

If $\Ss$ is not closed under products, then for $S_1,S_2 \in \Ss$ we
can use axiom U7 to define $\Gamma_{S_1 \cross S_2}$. Thus we may
assume that $\Ss$ is closed under products. Similarly, using U6 we may
assume that $\Ss$ is closed under taking (connected) subgroups, and
using U5 we may assume that $\Ss$ is closed under quotients. An
\emph{isogeny} is a surjective homomorphism with finite kernel. Groups
$S_1$ and $S_2$ are said to be \emph{isogenous} iff there is $S_3$ and
isogenies $\ra{S_3}{}{S_1}$ and $\ra{S_3}{}{S_2}$. By U5 we can also
assume that $\Ss$ is closed under the equivalence relation of isogeny.

\subsection{The category \K}

We now use Hrushovski's amalgamation-with-predimension technique to
produce a ``countable universal domain'', $U$, for $T_\Ss^U$. From the
construction of $U$ we will obtain an axiomatization of its complete
theory, $T_\Ss$. Again, it will be clear that the axiomatization is
expressible in $L_{\omega_1\omega}$.  We will later extract the first
order part of the theory.

We apply the amalgamation construction not to the category of all
countable models of $T_\Ss^U$, but to a subcategory. Fix a countable
algebraically closed field $C$ of characteristic zero, containing
$C_0$. Unless otherwise noted, we take $C$ to have a transcendence
degree $\aleph_0$ over $C_0$.

Take $\K$ to be the category of models of the theory $T_\Ss^U$ which
have this given field $C$, with arrows being embeddings of
$\L_\Ss$-structures which fix $C$.
Because we are working in a more abstract setting than usual, the
following lemma actually requires a proof.
\begin{lemma}
  The category $\K$ has intersections, that is, for each $B \in \K$,
  and each family $(A_i \into B)_{i \in I}$ of substructures of $B$,
  there is a limit $\bigcap_{i \in I} A_i \into B$ of the obvious
  diagram this defines. Furthermore the underlying field of this
  intersection is simply the intersection of the underlying fields of
  the substructures.
\end{lemma}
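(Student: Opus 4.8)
The plan is to realise the intersection concretely as the set‑theoretic one. Let $A$ be the $\L_\Ss$‑substructure of $B$ whose underlying set is $\bigcap_{i \in I} A_i$ (the underlying sets of the $A_i$, identified via the embeddings with subsets of $B$), with the field operations and the constants restricted from $B$ and each relation symbol $R$ interpreted as $R^B$ intersected with the appropriate power of $A$. An arbitrary intersection of substructures of a one‑sorted first‑order structure is again a substructure, so this is well defined; note that $A$ contains $C_0$, and that since every arrow of $\K$ fixes the named field $C$ we have $C^A = \bigcap_i C^{A_i} = C$. The inclusions $\iota_i : A \into A_i$ and $\iota : A \into B$ are then $\L_\Ss$‑embeddings fixing $C$ satisfying $\iota = (A_i \into B) \circ \iota_i$, so once we know $A \in \K$ these maps exhibit $A$ as a cone over the diagram.

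The one point that genuinely needs checking is that $A \in \K$, i.e. that $A \models T_\Ss^U$ with constant field $C$. Every axiom of $T_\Ss^U$ except the clause of U1 demanding that the field $F$ be algebraically closed is preserved downward along $\L_\Ss$‑embeddings --- this is exactly the ``universality'' of $T_\Ss^U$ noted above, and it covers in particular the relative algebraic closedness of $C$ in $F$, the axioms U2--U7, and the Schanuel property SP. For the remaining clause: the underlying field of $A$ is $\bigcap_i F^{A_i}$, an intersection of algebraically closed subfields of the algebraically closed field $F^B$, and any such intersection is algebraically closed, since an element of $F^B$ algebraic over $\bigcap_i F^{A_i}$ is algebraic over each $F^{A_i}$ and hence lies in each $F^{A_i}$. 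Thus $A \in \K$, and its underlying field is $\bigcap_i F^{A_i}$, as the last sentence of the lemma asserts.

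Finally I would verify the universal property. Suppose $D \in \K$ carries embeddings $p_i : D \into A_i$ and $p : D \into B$ with $p = (A_i \into B) \circ p_i$ for all $i$. For $d \in D$ we then have $p(d) \in A_i$ for every $i$, so $p(d) \in \bigcap_i A_i = A$; hence $p$ factors as a map $u : D \to A$ with $\iota \circ u = p$. Since $\iota$ is an injective $\L_\Ss$‑embedding and $p$ is an $\L_\Ss$‑embedding, $u$ is an $\L_\Ss$‑embedding (a relation or function statement holds in $D$ iff its $p$‑image holds in $B$ iff its $u$‑image holds in $A$, using that $\iota$ is an embedding); it fixes $C$ because $p$ does; and applying the injection $A_i \into B$ to $\iota = (A_i \into B)\circ \iota_i$ and to $p = (A_i \into B)\circ p_i$ gives $\iota_i \circ u = p_i$, so $u$ is a morphism of cones in $\K$. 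It is the unique such morphism because $\iota$ is injective. Hence $A$, with the maps $\iota_i$ and $\iota$, is the limit of the diagram.

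The only non‑bookkeeping ingredient in all of this --- and the reason the lemma is not entirely trivial in the present abstract setting --- is the observation that an intersection of algebraically closed subfields of a field is again algebraically closed; everything else is the usual manipulation of substructures and embeddings, together with a citation of the (essential) universality of $T_\Ss^U$.
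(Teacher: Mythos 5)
Your proposal is correct and follows essentially the same route as the paper's proof: both observe that $T_\Ss^U$ is downward-closed under substructures except for the algebraic closedness clause of U1, and that an intersection of algebraically closed subfields is algebraically closed. Your version simply spells out the bookkeeping (concrete realization, cone maps, universal property) that the paper leaves implicit.
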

\begin{proof}
  The axiomatization of $T_\Ss^U$ is universal, apart from the axiom
  scheme which says that the field is algebraically closed. The
  intersection of algebraically closed subfields of a field is
  algebraically closed, and any substructure of a model of a universal
  theory is also a model of that theory, so the category of models of
  $T_\Ss^U$ has intersections. The intersection of extensions of $C$
  is also an extension of $C$.
\end{proof}
Using this lemma, if $B \in \K$ and $X$ is a subset of $B$, we can
define the substructure of $B$ generated by $X$ as $\gen{X} = \bigcap
\class{A \into B}{X \subs A}$, where $A \into B$ means that $A$ is a
subobject of $B$ in $\K$. Note that $\gen{X}$ depends on $B$.

We say that $B$ is \emph{finitely generated} iff there is a finite
subset $X$ of $B$ such that $B = \gen{X}$. In fact, for any $A \in \K$
and subset $X$ of $A$, $\gen{X}$ is simply the algebraic closure of $C
\cup X$ in $A$, so an object $A$ of $\K$ is finitely generated iff
$\td(A/C)$ is finite. Thus being a finitely generated object of $\K$
is not the same as being finitely generated as an
$\L_\Ss$-structure. Indeed no objects of $\K$ are finitely generated
as $\L_\Ss$-structures since they are all algebraically closed
fields. \label{small in K}

We write $A \fgsub B$ to mean that $A$ is a finitely generated
substructure of $B$. From the above characterization it follows that
any substructure of a finitely generated structure in $\K$ is also
finitely generated.

\subsection{The predimension function}

The Schanuel property allows us to define a predimension function,
$\delta$, on the finitely generated objects of $\K$. It is defined
in terms of transcendence degree and a \emph{group rank}, which we
define using the next series of lemmas.

\begin{lemma}
  If $S_1$ and $S_2$ are isogenous then $\Gamma_{S_1}$ determines
  $\Gamma_{S_2}$.
\end{lemma}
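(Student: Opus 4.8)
The plan is to reduce everything to axiom U5. Since $S_1$ and $S_2$ are isogenous, there is a semiabelian variety $S_3$ — which we may take to lie in $\Ss$, as $\Ss$ is closed under the isogeny relation — together with isogenies $f_1 \colon S_3 \to S_1$ and $f_2 \colon S_3 \to S_2$. By the standing assumption on $C_0$, both $f_1$ and $f_2$, and hence the tangent maps $Tf_1$ and $Tf_2$, are defined over $C_0$ and so are $\emptyset$-definable in $\L_\Ss$. Thus it suffices to express $\Gamma_{S_2}$ in terms of $\Gamma_{S_1}$ using these fixed maps, and the same expression will work uniformly in every model.

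First I would apply U5 to the isogeny $f_1$, which gives $\Gamma_{S_3} = (Tf_1)^{-1}(\Gamma_{S_1})$; so $\Gamma_{S_1}$ determines $\Gamma_{S_3}$. Next I would pass from $\Gamma_{S_3}$ to $\Gamma_{S_2}$ along $Tf_2$. Axiom U5 supplies the inclusion $(Tf_2)(\Gamma_{S_3}) \subs \Gamma_{S_2}$ together with the equality $\Gamma_{S_3} = (Tf_2)^{-1}(\Gamma_{S_2})$. The only point requiring care is surjectivity of $Tf_2$: since we are in characteristic zero, $f_2$ is étale, so $df_2 \colon LS_3 \to LS_2$ is an isomorphism and $\ker(Tf_2)$ is finite; as $\dim TS_3 = 2\dim S_3 = 2\dim S_2 = \dim TS_2$ and $TS_3$ is connected, $Tf_2$ is surjective (indeed itself an isogeny). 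Combining surjectivity with $\Gamma_{S_3} = (Tf_2)^{-1}(\Gamma_{S_2})$ upgrades the inclusion to $(Tf_2)(\Gamma_{S_3}) = \Gamma_{S_2}$.

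Putting the two steps together yields $\Gamma_{S_2} = (Tf_2)\bigl((Tf_1)^{-1}(\Gamma_{S_1})\bigr)$, an expression for $\Gamma_{S_2}$ purely in terms of $\Gamma_{S_1}$ and the $C_0$-definable maps $Tf_1, Tf_2$; by the symmetry of the situation $\Gamma_{S_1}$ is likewise recovered from $\Gamma_{S_2}$. There is no substantial obstacle here beyond invoking U5 correctly in both directions — the pullback direction to reach $\Gamma_{S_3}$ and the pushforward direction to reach $\Gamma_{S_2}$ — and the one thing not to overlook is that the tangent map of an isogeny is surjective, which is exactly where characteristic zero enters and which is what guarantees that the forward image $(Tf_2)(\Gamma_{S_3})$ exhausts all of $\Gamma_{S_2}$ rather than merely a proper subgroup.
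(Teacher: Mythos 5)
Your proposal is correct and follows essentially the same route as the paper: decompose the isogeny relation via $S_3$ with isogenies $f_1, f_2$, then apply axiom U5 to get $\Gamma_{S_2} = (Tf_2)(Tf_1)^{-1}(\Gamma_{S_1})$. The paper states this equality with a bare ``by axiom U5''; your write-up usefully fills in the one point the paper elides, namely that surjectivity of $Tf_2$ (which holds because $f_2$ is an isogeny in characteristic zero, so $Tf_2$ has finite kernel and the right dimension, and the base field is algebraically closed) is what upgrades the U5 inclusion $(Tf_2)(\Gamma_{S_3}) \subs \Gamma_{S_2}$ together with $\Gamma_{S_3} = (Tf_2)^{-1}(\Gamma_{S_2})$ to an actual equality.
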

\begin{proof}
  By the definition of isogeny, there are an $S_3$ and isogenies
  $f_1:S_3 \to S_1$ and $f_2: S_3 \to S_2$. By axiom U5, $\Gamma_{S_2}
  = (Tf_2)(Tf_1)^{-1}(\Gamma_{S_1})$.
\end{proof}

\begin{lemma}\label{Smax lemma}
  For any extension $A \into B$ in $\K$ with $B$ finitely generated,
  there is $S \in \Ss$ of maximal dimension such that there is $g \in
  \Gamma_S(B)$, not lying in an $A$-coset of $TH$ for any proper
  algebraic subgroup $H$ of $S$.  Furthermore, this maximal $S$ is
  uniquely defined up to isogeny, and determines $\Gamma$ on $B$ as
  follows. 

  If $g' \in \Gamma_{S'}(B)$ for any $S' \in \Ss$, then there is $S''$
  isogenous to $S$, $g'' \in \Gamma_{S''}(B)$, a homomorphism
  $\ra{S''}{q}{S'}$, and $\gamma \in \Gamma_{S'}(A)$, such that $g' =
  (Tq)(g'')\cdot \gamma$, where $\cdot$ is the group operation in
  $S'$.
\end{lemma}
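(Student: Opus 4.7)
The plan is to prove the lemma in three stages: first, establish existence of a maximal $S$ and bound its dimension using axiom SP; second, prove the structural decomposition of $\Gamma$ on $B$ by exploiting the maximality of $(S,g)$ through products; third, deduce uniqueness up to isogeny as a corollary of the decomposition.

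To establish existence, I would use axiom SP to bound $\dim S$. If $g \in \Gamma_S(B)$ lies in no $A$-coset of $TH$ for any proper algebraic subgroup $H \prsubs S$, then in particular $g$ lies in no $C$-coset of such $TH$ (since $C \subs A$), so the contrapositive of SP gives $\td(g/C) \ge \dim S + 1$. Since $g \in B$ and $B$ is finitely generated in $\K$, $\td(g/C) \le \td(B/C) < \infty$, bounding $\dim S$. The trivial pair $(S,g)=(0,0)$ satisfies the condition vacuously, so a maximum exists.

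For the structural decomposition, fix such a maximal $(S,g)$ and let $g' \in \Gamma_{S'}(B)$ with $S' \ne 0$ (the trivial case being immediate). Using closure of $\Ss$ under products and axiom U7, $(g,g') \in \Gamma_{S \cross S'}(B)$. Since $\dim(S \cross S') > \dim S$, maximality forces $(g,g')$ into an $A$-coset of $TH$ for some proper connected algebraic subgroup $H \prsubs S \cross S'$; choose $H$ of minimal dimension. Write $(g,g') = \gamma_0 \cdot h$ with $\gamma_0 = (\alpha,\beta) \in T(S \cross S')(A)$ and $h \in TH(B)$. Projecting by $T\pi_1$, $g \in \alpha \cdot T(\pi_1(H))$; unless $\pi_1(H)=S$ this contradicts the defining property of $g$, so $\pi_1(H)=S$. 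The crucial claim is that $K := H \cap (\{0\} \cross S')$ is finite, whence $\pi_1|_H$ is an isogeny and $H$ is isogenous to $S$. I would prove this by applying maximality of $(S,g)$ to the pair $(H,h)$: on one side $\dim H \ge \dim S$ from surjectivity of $\pi_1|_H$, and minimality of $H$ forces $h$ to lie in no $A$-coset of $TH'$ for proper $H' \prsubs H$ (else the corresponding $A$-coset of $TH'$ would contain $(g,g')$). To invoke maximality I also need $h \in \Gamma_H(B)$, equivalently $\gamma_0 \in \Gamma_{S \cross S'}(A)$; I would arrange this by iterating axiom SP to peel off coset representatives which, by U3, lie in $T(\cdot)(C) \subs \Gamma(\cdot)(C) \subs \Gamma(\cdot)(A)$. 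Once $h \in \Gamma_H(B)$, maximality gives $\dim H \le \dim S$, hence $\dim H = \dim S$ and $K$ finite. Setting $S'' := H$, $g'' := h$, $q := \pi_2|_H : S'' \to S'$, and $\gamma := \beta \in \Gamma_{S'}(A)$ (using U7 to split $\Gamma_{S \cross S'}(A) = \Gamma_S(A) \cross \Gamma_{S'}(A)$) yields $g' = Tq(g'') \cdot \gamma$.

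For uniqueness up to isogeny, let $(S_1,g_1)$ and $(S_2,g_2)$ be two maximal witnesses. Apply the decomposition of the previous paragraph with $(S,g) = (S_1,g_1)$ and $g' = g_2$ to obtain $S''$ isogenous to $S_1$, a homomorphism $q: S'' \to S_2$, $g'' \in \Gamma_{S''}(B)$, and $\gamma \in \Gamma_{S_2}(A)$ with $g_2 = Tq(g'') \cdot \gamma$. The image $q(S'')$ must equal $S_2$: otherwise $g_2$ would lie in an $A$-coset of $T(q(S''))$, a proper subgroup, violating the property of $g_2$. Thus $q$ is surjective, giving $\dim S_2 \le \dim S'' = \dim S_1$; by symmetry $\dim S_1 = \dim S_2$, and the surjection $q$ between equidimensional semiabelian varieties is then an isogeny. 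So $S_1$ is isogenous to $S''$, which is isogenous to $S_2$. The main technical obstacle throughout lies in the refinement step of the middle paragraph: moving the coset representative $\gamma_0$ from $T(S \cross S')(A)$ into $\Gamma_{S \cross S'}(A)$, which is precisely the bridge between $T$-cosets and $\Gamma$-cosets that SP and U3 are designed to provide.
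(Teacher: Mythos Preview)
Your approach is essentially identical to the paper's: both establish existence via SP, then analyse $(g,g') \in \Gamma_{S\times S'}(B)$ and use maximality to produce a subgroup (your $H$, the paper's $S''$) projecting onto $S$ via an isogeny, with uniqueness following from the decomposition. The paper simply asserts ``Let $(\alpha,\beta) \in \Gamma_{S\times S'}(A)$ and $g'' \in \Gamma_{S''}(B)$ such that $(g,g') = g''\cdot(\alpha,\beta)$'' without justification, so you have been more careful than the paper in isolating the need to upgrade the coset representative from $T(S\times S')(A)$ to $\Gamma_{S\times S'}(A)$.

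That said, your proposed fix --- ``iterating axiom SP to peel off coset representatives which, by U3, lie in $T(\cdot)(C)$'' --- does not do the job as stated. SP only produces $C$-cosets, and the minimal subgroup for which $(g,g')$ lies in a $C$-coset of $TH$ may be strictly larger than the minimal one for $A$-cosets; so after peeling via SP and U3 you get $h \in \Gamma_H(B)$, but you lose the minimality with respect to $A$-cosets that you need to invoke maximality of $(S,g)$. A cleaner route is to note that the image of $(g,g')$ in $T((S\times S')/H)$ lies in $\Gamma_{(S\times S')/H}(A)$ (by U5 and the fact that $A$ is an $\L_\Ss$-substructure), so the real content is a lifting statement: any point of $\Gamma_{G/H}(A)$ in the image of $TG(A)$ lifts to $\Gamma_G(A)$. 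This is what both you and the paper are tacitly using; it deserves an explicit argument rather than an appeal to SP.
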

\begin{proof}
  If $g \in \Gamma_S(B)$ and does not lie in an $A$-coset of $TH$ for
  any proper algebraic subgroup $H$ of $S$, then it does not lie in a
  $C$-coset and by the Schanuel property SP, $\dim S < \td(g/C)$ or
  $\dim S = 0$. Also $\td(g/C) \le \td(B/C)$, so the dimension of $S$
  is bounded. At least one such $S$ exists (the zero-dimensional
  group), and hence a maximal such $S$ exists.

  Now let $S$ be of maximal dimension and $g \in \Gamma_S(B)$ as
  described. Suppose $g' \in \Gamma_{S'}(B)$ for some $S' \in
  \Ss$. Then $(g,g') \in \Gamma_{S\cross S'}(B) \subs T(S\cross
  S')(B)$. By maximality of $\dim S$, there is an algebraic subgroup
  $S''$ of $S\cross S'$, with $\dim S'' \le \dim S$, such that
  $(g,g')$ lies in an $A$-coset of $TS''$. Let $(\alpha,\beta) \in
  \Gamma_{S \cross S'}(A)$ and $g'' \in \Gamma_{S''}(B)$ such that
  $(g,g') = g'' \cdot (\alpha,\beta)$. The projection maps
  \[
  \begin{diagram}[height=2em,width=2em]
    &&S \cross S'\\
    & \ldTo<{\pr_1} &&\rdTo>{\pr_2} \\
    S &&&&S'
  \end{diagram} \quad \mbox{restrict to} \quad
  \begin{diagram}[height=2em,width=2em]
    &&S''\\
    & \ldTo<{p} &&\rdTo>{q} \\
    S &&&&S'
  \end{diagram}\]
  and we also have the maps $Tp$, $Tq$ on the tangent bundles.
  Then $(Tp)(g'') = g \cdot (Tp)(\alpha)$, which lies in $T(p(S''))$,
  where $p(S'')$ is an algebraic subgroup of $S$. Now $g$ does not lie
  in $TH$ for any proper algebraic subgroup $H$ of $S$, so $p(S'') =
  S$. Hence $\dim S'' = \dim S$ and $p$ is an isogeny.
  Let $\gamma = (T\pr_2)(\beta)$. Then $g' = (Tq)(g'') \cdot \gamma$,
  where $g'' \in \Gamma_{S''}(B)$ and $\gamma \in \Gamma_{S'}(A)$ as
  required. 

  If $\dim S' = \dim S$ then the same argument shows that $q$ is an
  isogeny. Hence $S$ is unique up to isogeny.
\end{proof}

\begin{defn}\label{Smax defn}
  For an extension $A \into B$ in $\K$, with $B$ finitely generated,
  define $\Smax(B/A)$ to be a maximal $S \in \Ss$ such that there is
  $g \in \Gamma_S(B)$, not lying in an $A$-coset of $TH$ for any
  proper algebraic subgroup $H$ of $S$. A point $g \in
  \Gamma_{\Smax(B/A)}$ which witnesses the maximality is said to be a
  \emph{basis for $\Gamma(B/A)$}. For a finitely generated $A \in \K$,
  define $\Smax(A) = \Smax(A/C)$.

  Note that $\Smax(B/A)$ is defined only up to isogeny.
\end{defn}

\begin{prop}\label{Smax extensions}
  Let $A, B \in \K$ be finitely generated, with $B$ an extension of
  $A$, that is, $A \subs B$. Then $\Smax(B)$ is an extension of
  $\Smax(A)$ in the group theory sense, that is, $\Smax(A)$ is a
  quotient of $\Smax(B)$. Furthermore, $\Smax(B/A)$ is the kernel of
  the quotient map.
\end{prop}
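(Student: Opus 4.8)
The statement concerns three finitely generated objects $C \subs A \subs B$ in $\K$ and their associated groups $\Smax(A) = \Smax(A/C)$, $\Smax(B) = \Smax(B/C)$, and $\Smax(B/A)$. I want to exhibit a short exact sequence of semiabelian varieties
\[0 \to \Smax(B/A) \to \Smax(B) \to \Smax(A) \to 0\]
(up to isogeny, since all three groups are defined only up to isogeny). The natural strategy is to produce the surjection $\Smax(B) \onto \Smax(A)$ explicitly using a basis, identify its kernel as $\Smax(B/A)$, and verify the maximality/dimension bookkeeping.

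**Step 1: set up bases.** Let $S = \Smax(B)$ with basis $g \in \Gamma_S(B)$, meaning $g$ lies in no $C$-coset of $TH$ for proper $H \strong S$. Since $S$ is also a candidate for $\Smax(A/C)$ only insofar as a basis point can be found in $A$, I need to compare $g$ (living in $B$) with a basis for $A$ over $C$. First I would apply Lemma~\ref{Smax lemma} with the extension $C \into A$ to get $S_A = \Smax(A)$ with basis $g_A \in \Gamma_{S_A}(A)$, and separately with $A \into B$ to get $S_{B/A} = \Smax(B/A)$ with basis $g_{B/A} \in \Gamma_{S_{B/A}}(B)$ not lying in any $A$-coset of a $TH$.

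**Step 2: produce the quotient map.** Consider $(g_A, g) \in \Gamma_{S_A \times S}(B)$. By maximality of $\dim S$ applied to the extension $C \into B$, the point $(g_A,g)$ lies in a $C$-coset of $TS'$ for some algebraic subgroup $S' \le S_A \times S$ with $\dim S' \le \dim S$. Now the argument from the proof of Lemma~\ref{Smax lemma} (the projection-map diagram) applies: projecting to $S$, since $g$ is a genuine basis over $C$ and lies in no proper $TH$, the map $S' \to S$ must be an isogeny, so $\dim S' = \dim S$, and we get a homomorphism $q\colon S \to S_A$ (up to replacing $S$ by an isogenous $S''$) with $(Tq)(g) = g_A \cdot \gamma$ for some $\gamma \in \Gamma_{S_A}(C)$. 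I would then argue $q$ is \emph{surjective}: its image is an algebraic subgroup $H_0 \le S_A$ with $g_A \cdot \gamma^{-1} \in TH_0$, i.e. $g_A$ in a $C$-coset of $TH_0$; since $g_A$ is a basis for $A$ over $C$ it lies in no proper such coset, forcing $H_0 = S_A$. Thus $q\colon S \onto S_A$ realizes $\Smax(A)$ as a quotient of $\Smax(B)$.

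**Step 3: identify the kernel with $\Smax(B/A)$, and dimension count.** Let $N = \ker q$, so $0 \to N \to S \onto S_A \to 0$ and $\dim N = \dim S - \dim S_A$. I claim $N$ is isogenous to $\Smax(B/A)$. One direction: since $(Tq)(g) = g_A\cdot\gamma$ with $g_A,\gamma$ both in $A$ (indeed $\gamma$ in $C$), the point $g$ — pushed into $TN$ via the canonical splitting / a suitable section, or more cleanly, the component of $g$ in the fibre — gives a point $g_N \in \Gamma_N(B)$ lying in no proper $A$-coset of $TH$; if it did, combined with the $A$-rationality of the $S_A$-part, $g$ itself would lie in an $A$-coset of $T$(a proper subgroup containing $N$'s image), contradicting that $g$ generates $\Smax(B)$ maximally. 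This shows $\dim \Smax(B/A) \ge \dim N$. For the reverse inequality, take the basis $g_{B/A} \in \Gamma_{S_{B/A}}(B)$ and form $(g_{B/A}, g) \in \Gamma_{S_{B/A}\times S}(B)$; project and use maximality of $\dim S$ to see $S_{B/A}$ receives a map from a subgroup of $S$ of dimension $\le \dim S$, but whose image cannot lie in the $S_A$-direction (as $g_{B/A}$ is independent over $A$, hence its image would have to be trivial mod $A$), so effectively $S_{B/A}$ is dominated by $N$ up to isogeny, giving $\dim \Smax(B/A) \le \dim N$. Combined with the first inequality and the uniqueness-up-to-isogeny in Lemma~\ref{Smax lemma}, $\Smax(B/A)$ is isogenous to $N = \ker q$, which is exactly the claim.

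**Main obstacle.** The delicate point is Step 3, specifically keeping straight which points are rational over which field and converting "$g$ lies in no proper $A$-coset" statements between $S$, $N$, and $S_A$ through the (non-canonically split but canonically-as-algebraic-group split) exact sequence and its tangent bundle $TS \iso LN \times N \times \cdots$. The bookkeeping with cosets of $TH$ under the homomorphisms $Tq$, $Tp$ — and ensuring that the various $\gamma$'s that appear genuinely lie in $\Gamma(A)$ rather than merely $\Gamma(B)$ — is where a careless argument would go wrong; I expect to lean heavily on axioms U5–U7 and on the exactness/functoriality of $T(-)$ and $L(-)$ to push this through, much as in the proof of Lemma~\ref{Smax lemma} itself.
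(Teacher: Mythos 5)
Your Steps 1--2 follow the paper's opening move: apply Lemma~\ref{Smax lemma} to the bases $a\in\Gamma_{\Smax(A)}(A)$ and $b\in\Gamma_{\Smax(B)}(B)$, obtain (up to isogeny) a homomorphism $q\colon S_B\to S_A$ with $(Tq)(b)=a\cdot\gamma^{-1}$, and use that $a$ is a basis to conclude $q$ is surjective. So far the proposals agree.

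The genuine gap is in your Step~3, direction one. You propose to ``push $g$ into $TN$ via the canonical splitting / a suitable section,'' or take ``the component of $g$ in the fibre.'' Neither of these exists: the exact sequence $0\to N\to S_B\to S_A\to 0$ of semiabelian varieties has no canonical splitting and in general no regular section, and a fibre of $q$ is a coset of $N$, not a product, so there is no ``component in $N$.'' What does exist --- and is what the paper uses --- is a \emph{multiplicative decomposition} of the basis point: since $(Tq)(b)=a\cdot\gamma^{-1}\in TS_A(A)$, and $A$ is algebraically closed so $Tq$ is surjective on $A$-points, one can choose $\alpha\in TS_B(A)$ with $(Tq)(\alpha)=(Tq)(b)$ and set $e:=b\cdot\alpha^{-1}\in TH(B)$ where $H=\ker q$. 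No splitting is needed; only the existence of an $A$-rational coset representative. Your second direction (the bound $\dim\Smax(B/A)\le\dim N$ via a separate argument with $(g_{B/A},g)$) is stated too loosely to check --- ``cannot lie in the $S_A$-direction'' and ``effectively dominated by $N$'' are not precise statements about subgroups of $S_{B/A}\times S_B$ --- and, more to the point, the paper shows it is unnecessary: once $e$ is exhibited, one proves directly that $e$ is a \emph{basis} for $\Gamma(B/A)$ by (i) checking that $e$ lies in no $A$-coset of $TJ$ for proper $J\lhd H$ (else $q_J(b)$ would exhibit a subgroup of $S_B/J$ of dimension exceeding $\dim S_A$ as a candidate for $\Smax(A)$), and (ii) observing that any $g\in\Gamma_S(B)$ can be written $g=(Tp)(e)\cdot(\text{$A$-point})$ via the formula for $b$, so $e$ determines $\Gamma(B)$ over $A$ in the sense of Lemma~\ref{Smax lemma}. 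This identifies $H$ as $\Smax(B/A)$ without any second dimension inequality. I would redo Step~3 along these lines, using the multiplicative decomposition rather than a splitting.
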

\begin{proof}
  Let $b \in \Gamma_{\Smax(B)}$ and $a \in \Gamma_{\Smax(A)}$ be
  bases, and write $S_B$ for $\Smax(B)$ and $S_A$ for
  $\Smax(A)$. Then, replacing $S_B$ by an isogenous group if
  necessary, there is a quotient map $\ra{S_B}{q}{S_A}$, and $\gamma
  \in S_A(C)$ such that $a = (Tq)(b)\cdot \gamma$. Thus $(Tq)(b) \in
  S_A(A)$, so $b$ lies in an $A$-coset of $TH$, where $H$ is the
  kernel of $q$. Say $b = e \cdot \alpha$, with $e \in H(B)$ and
  $\alpha \in S_B(A)$.

  We will show that $e$ is a basis for $\Gamma(B/A)$. Firstly, $e$
  does not lie in an $A$-coset of $TJ$ for any proper algebraic
  subgroup $J$ of $H$, since then $S/J$ would be $\Smax(A)$. If $g \in
  \Gamma_S(B)$ then, by the properties of $\Smax(B)$, up to isogeny
  there is $\ra{S_B}{p}{S}$ such that $g = (Tp)(b)\cdot\beta$, for
  some $\beta \in S(A)$. But then $g = (Tp)(e)\cdot
  (Tp)(\alpha)\cdot\beta$, and $(Tp)(\alpha)\cdot\beta \in
  S(A)$. Hence $e$ is a basis for $\Gamma(B/A)$, and $H = \Smax(B/A)$.
\end{proof}

\begin{defn}\label{grk delta defn}
  For an extension $A \into B$ in $\K$, with $B$ finitely generated,
  define the \emph{group rank} and \emph{predimension} to be
  \[\grk(B/A) = \dim \Smax(B/A) \qquad \qquad \delta(B/A) = \td(B/A) -
  \grk(B/A)\] respectively. For any subset $X \subs B$, define
  $\grk(X/A) = \grk(\gen{X,A}/A)$ and define $\delta(X/A) =
  \delta(\gen{X,A}/A)$.
  Also define $\grk(A) = \grk(A/C)$ and $\delta(A) = \delta(A/C)$.
\end{defn}
The Schanuel property says precisely that $\delta(A) \ge 0$ for each
finitely generated structure $A$, with equality iff $A = C$.

\begin{lemma}\label{relative delta}
  For an extension $A \into B$ in $\K$, with $B$ finitely generated,
  $\grk(B) = \grk(B/A) + \grk(A)$ and $\delta(B) = \delta(B/A) +
  \delta(A)$.
\end{lemma}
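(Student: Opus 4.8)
The plan is to reduce both identities to facts already in hand: Proposition~\ref{Smax extensions} takes care of the group rank, and ordinary additivity of transcendence degree then takes care of the predimension. First a preliminary observation: although the statement only assumes $B$ finitely generated, the subobject $A \subs B$ is automatically finitely generated as well, since any substructure of a finitely generated object of $\K$ is itself finitely generated (this was noted just before the definition of $\fgsub$). Hence all six quantities $\grk(B)$, $\grk(A)$, $\grk(B/A)$, $\delta(B)$, $\delta(A)$, $\delta(B/A)$ are defined, and Proposition~\ref{Smax extensions} applies to the pair $A \subs B$.

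For the group rank, Proposition~\ref{Smax extensions} provides (after replacing $\Smax(B)$ by an isogenous group, which changes no dimensions) a quotient homomorphism of algebraic groups $\ra{\Smax(B)}{q}{\Smax(A)}$ whose kernel is $\Smax(B/A)$, i.e.\ a short exact sequence
\[ 0 \longrightarrow \Smax(B/A) \longrightarrow \Smax(B) \stackrel{q}{\longrightarrow} \Smax(A) \longrightarrow 0 .\]
Since dimension is additive along short exact sequences of algebraic groups, $\dim\Smax(B) = \dim\Smax(B/A) + \dim\Smax(A)$, which is exactly $\grk(B) = \grk(B/A) + \grk(A)$.

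For the predimension, recall that by definition $\grk(B) = \grk(B/C)$, $\grk(A) = \grk(A/C)$, and $\delta(A) = \delta(A/C)$, etc. Transcendence degree is additive in the tower $C \subs A \subs B$, so $\td(B/C) = \td(B/A) + \td(A/C)$. Combining this with the group-rank identity just established,
\[ \delta(B) = \td(B/C) - \grk(B) = \bigl(\td(B/A) + \td(A/C)\bigr) - \bigl(\grk(B/A) + \grk(A)\bigr) = \delta(B/A) + \delta(A) .\]

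There is no real obstacle: all the substantive work lies in Proposition~\ref{Smax extensions}, which identifies $\Smax(B/A)$ as the kernel of the quotient map $\Smax(B)\to\Smax(A)$. The only things to check here are that the hypotheses of that proposition are met (handled by the remark on finite generation) and the two routine additivity facts, for the dimension of algebraic groups in an exact sequence and for transcendence degree in a tower of fields.
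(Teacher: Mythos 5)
Your proof is correct and follows exactly the same route as the paper: the group-rank identity is read off from Proposition~\ref{Smax extensions} (additivity of dimension in the short exact sequence $0 \to \Smax(B/A) \to \Smax(B) \to \Smax(A) \to 0$), and the predimension identity then follows by additivity of transcendence degree in the tower $C \subs A \subs B$. The only difference is that you spell out the routine details the paper leaves implicit, including the observation that $A$ is automatically finitely generated.
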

\begin{proof}
  The statement for group rank is immediate from proposition~\ref{Smax
  extensions}. The same property holds for transcendence degree, that
  is, $\td(B/C) = \td(B/A) + \td(A/C)$, and the result for the
  predimension follows.
\end{proof}

An essential property of $\delta$ is that it is \emph{submodular}.
\begin{lemma}\label{submodularity}
  The predimension $\delta$ is submodular on $\K$. That is, for any
  finitely generated $B \in \K$ and any $A_1, A_2 \subs B$, such that
  $A_1,A_2 \in \K$,
  \[\delta(A_1 \cup A_2) + \delta(A_1 \cap A_2) \le \delta(A_1) +
  \delta(A_2).\]
\end{lemma}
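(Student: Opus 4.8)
The plan is to reduce submodularity of $\delta$ to submodularity of each of its two constituents: transcendence degree, which is classically submodular, and the group rank $\grk$, for which we must establish the reverse inequality (supermodularity) so that subtracting it preserves submodularity of the difference. Concretely, write $\delta = \td - \grk$ everywhere relative to the base $C$, and recall that all the structures involved have algebraically closed underlying fields and contain $C$. The first step is to dispense with the transcendence-degree part: the function $A \mapsto \td(A/C)$ is submodular on algebraically closed subfields (this is the standard fact that $\td(FG/C) + \td(F \cap G/C) \le \td(F/C) + \td(G/C)$ for algebraically closed $F, G$, since $\td$ is the dimension function of a matroid and $\langle A_1 \cup A_2 \rangle$ has underlying field the algebraic closure of the compositum). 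So it remains to show
\[
\grk(A_1 \cup A_2) + \grk(A_1 \cap A_2) \ge \grk(A_1) + \grk(A_2).
\]

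For the group rank, I would work with the additive characterisation coming from Proposition~\ref{Smax extensions} and Lemma~\ref{relative delta}. Using Lemma~\ref{relative delta} relative to the base $A_1 \cap A_2$, the desired inequality $\grk(A_1 \cup A_2) + \grk(A_1 \cap A_2) \ge \grk(A_1) + \grk(A_2)$ is equivalent, after cancelling $\grk(A_1 \cap A_2)$ and rewriting $\grk(A_i) = \grk(A_i/A_1\cap A_2) + \grk(A_1 \cap A_2)$, to
\[
\grk(A_1 \cup A_2 / A_1 \cap A_2) \ge \grk(A_1 / A_1 \cap A_2) + \grk(A_2 / A_1 \cap A_2).
\]
Relativising everything to the base $A_1$ (again via Lemma~\ref{relative delta}) further reduces this to the single clean statement
\[
\grk(A_1 \cup A_2 / A_1) \ge \grk(A_2 / A_1 \cap A_2),
\]
i.e.\ $\dim \Smax(\langle A_1 \cup A_2\rangle / A_1) \ge \dim \Smax(A_2 / A_1 \cap A_2)$. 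To prove this I would take a basis $g \in \Gamma_{S}(A_2)$ witnessing $S = \Smax(A_2/A_1\cap A_2)$, so $g$ lies in no $(A_1\cap A_2)$-coset of $TH$ for any proper $H \le S$, and show that $g$, now viewed inside $\langle A_1 \cup A_2\rangle$, still does not lie in any $A_1$-coset of $TH$ for a proper algebraic subgroup $H \le S$. Granting this, $g$ witnesses that $\Smax(\langle A_1\cup A_2\rangle/A_1)$ has dimension at least $\dim S$, which is exactly what is needed (the maximal $S'$ for $\langle A_1\cup A_2\rangle/A_1$ could be larger, but that only helps).

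The crux — and the step I expect to be the main obstacle — is this non-descent claim: if $g$ lies in an $A_1$-coset $\gamma \cdot TH$ with $H \le S$ proper and $\gamma \in \Gamma_H(A_1)$ (we may take $\gamma$ in $\Gamma$ by axiom~U6, since $\Gamma_H = \Gamma_S \cap TH$), then in fact $g$ already lies in an $(A_1 \cap A_2)$-coset of $TH$, contradicting the choice of $g$. The point is that $g \in \Gamma_S(A_2)$ and $\gamma \in TH(A_1)$, so $\gamma = g \cdot e^{-1}$ with $e \in TH(B)$; I want to conclude $\gamma \in TH(A_1 \cap A_2)$. This should follow because $\gamma$ is an element of $B$ definable (over $\emptyset$, using that $H$ and the group operations are defined over $C_0 \subs C$) from $g$ and data in $A_1$, hence $\gamma \in \dcl(A_1) = A_1$; but also $\gamma = g \cdot e^{-1}$ where the components need a little care — the cleanest route is to use that the coordinates of $\gamma$ in the affine chart lie in the field $A_1$ and simultaneously, since $g$'s coordinates lie in $A_2$ and $H$ is defined over $C \subs A_1\cap A_2$, a Galois/field-theoretic argument over the algebraically closed subfields shows the coset representative can be chosen in $A_1 \cap A_2$. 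Here the hypothesis that $A_1, A_2, A_1\cap A_2$ are all \emph{algebraically closed} subfields and that everything is defined over $C_0$ is essential: it lets one intersect fields of definition. I would spell this out by noting that if a point of $\Gamma_S(B)$ lies in $\gamma \cdot TH$ with $\gamma \in TH(B)$, then the set of all such $\gamma$ is a coset of $TH(B)$, and it is defined over the field generated over $C$ by $g$, which lies in $A_2$; intersecting with $A_1$ (where at least one $\gamma$ lives) forces a representative in $A_1\cap A_2$ because $A_1 \cap A_2$ is relatively algebraically closed and contains all the coefficients. This intersection-of-definition-fields argument, carried out carefully with the identification $TS \cong LS \times S$ and the groups over $C_0$, is where the real work lies; the rest is bookkeeping with Lemma~\ref{relative delta}.
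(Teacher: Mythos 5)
Your reduction is correct and in fact slightly slicker than the paper's. You reduce, via Lemma~\ref{relative delta}, to the single inequality $\grk(\gen{A_1\cup A_2}/A_1)\ge \grk(A_2/A_1\cap A_2)$, and then show that a basis $g\in\Gamma_S(A_2)$ for $\Gamma(A_2/A_0)$ (writing $A_0=A_1\cap A_2$) already witnesses this over $A_1$. The paper instead takes bases $g_1,g_2$ of $\Gamma(A_1/A_0)$, $\Gamma(A_2/A_0)$, supposes $\dim\Smax(A_3/A_0)<\dim S_1\cross S_2$, extracts a quotient $p:S_1\cross S_2\to J$ with $(Tp)(g_1,g_2)\in TJ(A_0)$, splits $p$ into $p_1,p_2$, and observes $(Tp_1)(g_1)\in TJ(A_1)\cap TJ(A_2)=TJ(A_0)$. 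Both proofs rest on the same ingredient; yours just applies it once and more directly.

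Where you go wrong is in dramatising the ``crux.'' There is no Galois argument, no appeal to $\dcl$, and no delicate choice of coset representative needed. Saying that $g$ lies in an $A_1$-coset of $TH$ is exactly the statement that $q_H(g)\in T(S/H)(A_1)$, where $q_H:TS\to T(S/H)$ is the quotient map, defined over $C\subs A_0$. Since $g\in TS(A_2)$ and $q_H$ is over $C$, also $q_H(g)\in T(S/H)(A_2)$. But $T(S/H)$ is a variety over $C$, so $T(S/H)(A_1)\cap T(S/H)(A_2)=T(S/H)(A_1\cap A_2)=T(S/H)(A_0)$ — this is just the definition of $K$-points, since the coordinate functions of any affine chart over $C$ take values in both $A_1$ and $A_2$, hence in $A_0$. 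Thus $g$ lies in an $A_0$-coset of $TH$, contradicting the choice of $g$. You do not need axiom~U6, and your parenthetical ``$\gamma\in\Gamma_H(A_1)$'' is not right (the coset representative need only lie in $TS(A_1)$, and working with $q_H(g)$ sidesteps the issue of choosing one). If you replace the speculative last paragraph with this two-line observation, your proof is complete and arguably preferable to the one in the paper.
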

\begin{proof}
  Let $A_0 = A_1 \cap A_2$, and $A_3 = \gen{A_1 \cup A_2}$. We first
  show that $\grk(A_3/A_0) \ge \grk(A_1/A_0) + \grk(A_2/A_0)$. For
  $i=1,2,3$, let $S_i = \Smax(A_i/A_0)$ and let $g_i \in TS_i$ be a
  basis for $\Gamma(A_i/A_0)$. By lemma~\ref{Smax lemma}, there are
  (up to isogeny) homomorphisms $\ra{S_3}{q_i}{S_i}$ for $i=1,2$ such
  that $g_i = (Tq_i)(g_3)$. 

  Suppose $\grk(A_3/A_0) < \grk(A_1/A_0) + \grk(A_2/A_0)$. Then $\dim
  S_3 < \dim S_1 \cross S_2$, and by definition of $\Smax$, there is a
  proper algebraic subgroup $H$ of $S_1 \cross S_2$ such that
  $(g_1,g_2)$ lies in an $A_0$-coset of $TH$. Now $H$ is normal in
  $S_1 \cross S_2$ since the groups are commutative, so it is the
  kernel of some algebraic group homomorphism $\ra{S_1\cross
    S_2}{p}{J}$, and $(Tp)(g_1,g_2) = \alpha \in TJ(A_0)$. Since the
  product $S_1\cross S_2$ is also the direct sum $S_1 \oplus S_2$ in
  the category of commutative algebraic groups, we can write
  $(Tp)(g_1,g_2)$ as $(Tp_1)(g_1)\cdot (Tp_2)(g_2)$, where
  $\ra{S_1}{p_1}{J}$ is given by $p_1(x) = p(x,1)$, and symmetrically
  $p_2$. Then $(Tp_1)(g_1) = \alpha \cdot (Tp_2)(g_2)^{-1} \in
  TJ(A_2)$, so $Tp_1(g_1) \in TJ(A_1 \cap A_2) = TJ(A_0)$. Thus $g_1$
  lies in an $A_0$-coset of $T(\ker p_1)$, but $\dim J > 0$, so $\ker
  p_1$ is a proper algebraic subgroup of $S_1$, which contradicts
  $g_1$ being a basis of $\Gamma(A_1/A_0)$. So $\grk(A_3/A_0) \ge
  \grk(A_1/A_0) + \grk(A_2/A_0)$. Thus
  \[\grk(A_1\cup A_2/A_0) + 2\grk(A_0) \ge \grk(A_1/A_0) +
  \grk(A_2/A_0) + 2\grk(A_0)\] 
  and hence, by lemma~\ref{relative delta},
  \begin{equation}\label{grkeqn}
    \grk(A_1\cup A_2) + \grk(A_1 \cap A_2) \ge \grk(A_1) + \grk(A_2).
  \end{equation}
  Now
  \begin{equation}\label{tdeqn}
    \td(A_1 \cup A_2/C) + \td(A_1 \cap A_2/C) \le \td(A_1/C) +
    \td(A_2/C)
  \end{equation}
  and so, subtracting (\ref{grkeqn}) from (\ref{tdeqn}), we see that
  $\delta$ is submodular.
\end{proof}

\subsection{Self-sufficient embeddings}

The intuition behind the predimension function $\delta$ is that is
measures the number of ``degrees of freedom'', which could be thought
of as the number of variables minus the number of constraints. We
cannot amalgamate over all embeddings because an amalgam of arbitrary
embeddings will not always have the Schanuel property. That is, $\K$
does not have the amalgamation property. The problem is that for some
embeddings $A \into B$ there will be extra constraints on $A$ which
are not apparent in $A$ but are witnessed only in the extension $B$.
We will amalgamate only over those embeddings where this does not
occur. Informally, an embedding $A \into B$ is \emph{self-sufficient}
if any dependency (constraint) between members of $A$ in $B$ is
already witnessed in $A$. The formal definition does not require the
structures to be finitely generated.
\begin{defn}
  We say that an embedding of structures $A \into B$ is
  \emph{self-sufficient} iff for every $X \fgsub B$ we have $\delta(X
  \cap A) \le \delta(X)$. In this case, we write the embedding as $A
  \strong B$ or $A \rStrong B$ and we say that $A$ is self-sufficient
  in $B$.
\end{defn}

\begin{lemma}
  Taking all the objects of $\K$ with just the self-sufficient
  embeddings gives a subcategory $\Ks$ of $\K$.
\end{lemma}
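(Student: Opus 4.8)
The plan is to verify the two defining conditions of a subcategory: that $\Ks$ contains all identity morphisms, and that it is closed under composition. The first is trivial: for any $A \in \K$ and any $X \fgsub A$ we have $X \cap A = X$, so $\delta(X \cap A) = \delta(X)$, and the identity $A \to A$ is self-sufficient. So the content is closure under composition: given self-sufficient embeddings $A \strong B$ and $B \strong C$, I must show the composite $A \strong C$ is self-sufficient, i.e.\ that $\delta(X \cap A) \le \delta(X)$ for every $X \fgsub C$.

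**The key step.**

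First I would reduce to a statement about finitely generated pieces inside $B$. Given $X \fgsub C$, consider $Y = \gen{(X \cap B), (X \cap A)}$ computed inside $B$; note $Y \fgsub B$ since it is finitely generated over $C$, and $Y \cap A = X \cap A$ (using that $X \cap A \subs B$ and that $\gen{\cdot}$ is algebraic closure, which is monotone and idempotent). Since $A \strong B$, this gives $\delta(X \cap A) = \delta(Y \cap A) \le \delta(Y)$. The remaining task is to bound $\delta(Y)$ by $\delta(X)$. Here I would use submodularity (lemma~\ref{submodularity}) applied inside $C$ to the pair $X$ and $B$ (or rather a suitable finitely generated chunk of $B$ containing $Y$): since $X \cap B \subs Y$, monotonicity-type arguments together with $\delta(X \cap B) \le \delta(X)$ coming from $B \strong C$ should yield $\delta(Y) \le \delta(X)$. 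More carefully, the cleanest route is: $B \strong C$ gives $\delta(X \cap B) \le \delta(X)$, and since $Y$ is generated by $X \cap B$ together with $X \cap A \subs X$, one has $Y \subs \gen{X \cap B, X}= \gen{X}$ but also $Y \sups X \cap B$; applying submodularity to $X$ and $Y$ inside $\gen{X}\cup B$ and using $X \cap Y \sups X \cap B$ plus $X \cup Y = \gen{X}$ lets me conclude. I would chase through the relative-predimension identity of lemma~\ref{relative delta} to make the bookkeeping of the intermediate structures precise.

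**The main obstacle.**

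The delicate point is that $\gen{\cdot}$ depends on the ambient structure, so one must be careful that intersections like $X \cap A$, computed in $C$, agree with the corresponding intersections computed in $B$, and that the finitely generated structures I introduce genuinely lie in $\Ks$'s objects (i.e.\ in $\K$) and in the right ambient objects. Since $\gen{X}$ in $C$ is just $\acl(C \cup X)$ in $C$, and $A \strong C$ is not yet known, I should phrase everything in terms of $\delta$ of literal intersections of underlying sets and invoke submodularity only for subsets $A_1, A_2 \subs B$ that are themselves objects of $\K$ — which, by the discussion at~\ref{small in K}, means algebraically closed subfields containing $C$. So the real work is choosing the intermediate algebraically closed subfield correctly and checking the set-theoretic identities among these closures; the inequality itself then falls out of submodularity and self-sufficiency of the two given embeddings. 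I do not expect any genuinely hard step, only careful attention to which structure each closure and intersection is taken in.
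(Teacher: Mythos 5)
Your argument is correct but much more elaborate than it needs to be, and the extra machinery obscures rather than aids. The set $Y = \gen{(X \cap B),(X \cap A)}$ that you introduce is just $X \cap B$: since $A \subs B$ we have $X \cap A \subs X \cap B$, and $X \cap B$, being the intersection of two objects of $\K$, is already an algebraically closed subfield containing the constants, hence already an object of $\K$ and closed under $\gen{\cdot}$. Once you see this, the whole argument collapses to the one-line chain
\[
\delta(X \cap A) = \delta\bigl((X \cap B) \cap A\bigr) \le \delta(X \cap B) \le \delta(X),
\]
where the first inequality applies $A \strong B$ to $X \cap B \fgsub B$ (finitely generated because it is an algebraically closed subfield of $X$ of finite transcendence degree over the constants) and the second applies $B \strong C$ directly to $X$. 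No submodularity is needed, nor any bookkeeping of ambient structures beyond noting that $\gen{\cdot}$ here is literal relative algebraic closure. This is precisely the content of the paper's remark that the composite of self-sufficient embeddings is ``immediate.'' Your ``main obstacle'' paragraph is therefore a false alarm: the only set-theoretic identity you need is $X \cap A = (X\cap B)\cap A$, which holds because $A \subs B$, independent of any closure operators. The simplification of the self-sufficiency definition (intersect with a finitely generated piece of the ambient structure, rather than the historical quantifier over all finitely generated extensions of a fixed finitely generated subset of $A$) was chosen precisely so that transitivity becomes this trivial.
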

\begin{proof}
  It is immediate that identity embeddings are self-sufficient and the
  composite of self-sufficient embeddings is self-sufficient.
\end{proof}

It is customary to write self-sufficient embeddings as $A \le B$, but
this seems to me to be an unnecessary duplication of a common symbol
and potentially confusing, so I prefer to avoid it. This is a
simplification of the original definition of a self-sufficient
embedding (see for example \cite{Hru93}), and it is equivalent to the
original definition for any $\delta$ which is submodular, as
predimension functions for Hrushovski-type constructions are.

\begin{lemma}\label{Ks has intersections}
  If $A_i \strong B$ for each $i$ in some index set $I$ and $A =
  \bigcap_{i \in I}A_i$ is the intersection in $\K$, then $A \strong
  B$. In particular, the category $\Ks$ has intersections.
\end{lemma}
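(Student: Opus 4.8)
The plan is to unwind the definition of self-sufficiency and reduce to a statement about finitely generated substructures, then use submodularity of $\delta$ to handle the intersection. First I would recall that $A \strong B$ means: for every finitely generated $X \fgsub B$, we have $\delta(X \cap A) \le \delta(X)$. So to show $A = \bigcap_{i\in I} A_i \strong B$, I must fix an arbitrary $X \fgsub B$ and prove $\delta(X \cap A) \le \delta(X)$. Note that $X \cap A = \bigcap_{i\in I}(X \cap A_i)$, and each $X \cap A_i$ is finitely generated (being a substructure of the finitely generated structure $X$, by the remark at the end of section on \K). Also each $X \cap A_i \in \K$ since it is an intersection of objects of \K, and $X \cap A_i \subs X$.

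The main work is the following reduction: it suffices to treat the case of two indices, $I = \{1,2\}$, and then iterate. Indeed, if $A_1, A_2 \strong B$ then for $X \fgsub B$, submodularity (lemma~\ref{submodularity}, applied inside the finitely generated structure $X$ with $A_1' = X\cap A_1$, $A_2' = X \cap A_2$) gives
\[
\delta\big((X\cap A_1)\cup(X\cap A_2)\big) + \delta\big((X\cap A_1)\cap(X\cap A_2)\big) \le \delta(X\cap A_1) + \delta(X\cap A_2).
\]
Since $A_1 \strong B$ gives $\delta(X\cap A_1)\le\delta(X)$ and similarly for $A_2$, and since $(X\cap A_1)\cup(X\cap A_2) \subs X$ so that $\delta$ of the generated substructure relates to $\delta(X)$ appropriately, I can rearrange to get $\delta(X \cap A_1 \cap A_2) \le \delta(X)$. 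A little care is needed here: the clean inequality wants $\delta\big(\gen{(X\cap A_1)\cup(X\cap A_2)}\big) \ge \delta(X) - \big(\delta(X) - \delta(X\cap A_1)\big) - \big(\delta(X)-\delta(X\cap A_2)\big)$-type bookkeeping, so the honest argument is to combine submodularity with the two self-sufficiency inequalities and the monotonicity-type fact that an object sitting inside $X$ can be analysed relative to $X$. The cleanest route is: set $Y = \gen{(X\cap A_1)\cup(X\cap A_2)} \fgsub X$; then $\delta(X\cap A_1\cap A_2) \le \delta(X\cap A_1) + \delta(X\cap A_2) - \delta(Y)$, and I want $\delta(Y)\ge \delta(X\cap A_1)+\delta(X\cap A_2)-\delta(X)$, which follows because $A_1\strong B$ applied to the finitely generated set $Y\cup(X\setminus\text{stuff})$... — rather, the slick finish is to observe $A_1 \cap A_2 \strong A_1$ (two-index submodularity again, now with $B$ replaced by $A_1$) combined with $A_1 \strong B$, and use transitivity of $\strong$, which holds since self-sufficient embeddings form a category. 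So the two-index case really reduces to: $A_1, A_2 \strong B$ implies $A_1 \cap A_2 \strong A_1$, which is a direct submodularity computation, and then $A_1\cap A_2 \strong A_1 \strong B$ composes.

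For arbitrary index sets $I$, I would reduce to the finite case by a compactness/finiteness observation: for any single $X \fgsub B$, the structure $X\cap A = \bigcap_i (X\cap A_i)$ is determined by finitely many of the $A_i$ — more precisely, $\td(X\cap A/C)$ is finite, and one can find finitely many indices $i_1,\dots,i_k$ with $X \cap A_{i_1}\cap\cdots\cap A_{i_k} = X \cap A$ (since the $X\cap A_i$ are substructures of the Noetherian-like object $X$ of finite transcendence degree, a descending chain stabilises). Then $\delta(X\cap A) = \delta(X\cap A_{i_1}\cap\cdots\cap A_{i_k}) \le \delta(X)$ by the finite case. The final sentence, that $\Ks$ has intersections, is then immediate: the intersection $\bigcap_i A_i$ computed in $\K$ is an object of $\K$, it embeds self-sufficiently in $B$ by what we just proved, and each $A_i$ contains it with $A \strong A_i$ (same argument relativised to $A_i$ in place of $B$), so it is the limit of the diagram in $\Ks$.

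The main obstacle I anticipate is the bookkeeping in the two-index reduction — getting the submodular inequality to talk about $\delta$ relative to the right base and confirming that $A_1 \cap A_2 \strong A_1$ follows cleanly, i.e. verifying that for every $X \fgsub A_1$ one has $\delta(X \cap A_2) \le \delta(X)$ using only $A_2 \strong B$ and submodularity inside $B$. This should go through because $X \fgsub A_1 \fgsub B$ and $X \cap A_2 = X \cap (A_1 \cap A_2)$; but one must be slightly careful that submodularity as stated in lemma~\ref{submodularity} requires all the pieces to be substructures lying inside a common finitely generated $B$, which is fine here since everything sits inside $X$ itself.
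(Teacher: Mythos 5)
Your proof plan is essentially correct and lands on the same argument as the paper: show the binary case by establishing $A_1 \cap A_2 \strong A_1$ and composing with $A_1 \strong B$; then handle arbitrary index sets by the finite-character observation that inside a fixed $X \fgsub B$ the descending chain of algebraically closed subfields $X \cap A_{i_1} \cap \cdots \cap A_{i_k}$ must stabilise, because $\td(X/C)$ is finite.

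The one place where you make the argument look harder than it is: you keep reaching for submodularity to establish $A_1 \cap A_2 \strong A_1$, and worry about ``bookkeeping'' relative to the right base. In fact no submodularity is needed at all for this step. Fix $X \fgsub A_1$. Then $X \fgsub B$, so the definition of $A_2 \strong B$ applied to this $X$ gives $\delta(X \cap A_2) \le \delta(X)$ directly; and since $X \subs A_1$ one has $X \cap A_2 = X \cap (A_1 \cap A_2)$, which is exactly the inequality required for $A_1 \cap A_2 \strong A_1$. This is precisely what the paper does, and once stated this way the ``submodular inequality'' and the attempted rearrangement at the start of your plan are unnecessary detours. (Submodularity is, of course, what makes the simplified definition of $\strong$ equivalent to Hrushovski's original one, but for this particular lemma you never need to invoke it.) Everything else in your plan — transitivity of $\strong$, finite character for infinite intersections, and the closing observation that the intersection is a limit in $\Ks$ — matches the paper and is sound.
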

\begin{proof}
  First we show that it holds for binary intersections. Suppose
  $A_1,A_2 \strong B$. Let $X \fgsub A_1$. Then $\delta(X \cap (A_1
  \cap A_2)) = \delta(X \cap A_2) \le \delta(X)$ since $A_2 \strong B$
  and $X \fgsub B$. So $A_1 \cap A_2 \strong A_1$, but also $A_1
  \strong B$ and so $A_1 \cap A_2 \strong B$. By induction, any finite
  intersection of self-sufficient substructures of $B$ is also
  self-sufficient in $B$.

  The case of an arbitrary intersection of self-sufficient subsets
  follows by a finite character argument. Let $X \fgsub B$. Then $X
  \cap \bigcap_{i \in I}A_i$ is an algebraically closed subfield of
  $X$, which has finite transcendence degree. The lattice of
  algebraically closed subfields of $X$ has no infinite chains, hence
  there is a finite subset $I_0$ of $I$ such that $X \cap \bigcap_{i
  \in I}A_i = X \cap \bigcap_{i \in I_0}A_i$. By the above,
  $\bigcap_{i \in I_0}A_i \strong B$, and so $\delta(X \cap \bigcap_{i
  \in I}A_i) \le \delta(X)$. So $\bigcap_{i \in I}A_i \strong B$ as
  required.
\end{proof}

As with $\K$, the existence of intersections allows one to define the
subobject \emph{generated} by some set, and consequently the notion of
a finitely generated object in $\Ks$. This greatly simplifies the
presentation, and is one reason for working in the category $\K$
rather than the category of all $\L_\Ss$-substructures of models. To
distinguish this notion of generation from that in $\K$, we give it a
different name.
\begin{defn}
  If $B$ is a structure and $X$ is a subset of $B$ then the
  \emph{hull} of $X$ in $B$ is given by $\hull{X} = \bigcap \class{A
  \strong B}{X \subs A}$.
\end{defn}
Note that as for $\gen{X}$, the hull $\hull{X}$ depends on $B$,
although we do not write the dependence explicitly. Hulls give another
way of showing that an embedding is self-sufficient.
\begin{lemma}\label{hulls for selfsuff}
  $A \strong B$ iff for every $Y \fgsub A$, $\hull{Y} \subs A$.
\end{lemma}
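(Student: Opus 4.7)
The plan is to prove both directions using the definition of hull together with Lemma \ref{Ks has intersections}. The forward direction is essentially immediate from the definition of hull as an intersection, while the backward direction requires one small trick: showing that for any $X \fgsub B$, the intersection $X \cap \hull{X \cap A}$ is simply $X \cap A$.

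For the forward direction ($\Rightarrow$), assume $A \strong B$. Given any $Y \fgsub A$, observe that $A$ itself is a self-sufficient substructure of $B$ containing $Y$, so $A$ appears among the objects whose intersection defines $\hull{Y}$. Hence $\hull{Y} \subs A$.

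For the backward direction ($\Leftarrow$), assume that $\hull{Y} \subs A$ for every $Y \fgsub A$, and let $X \fgsub B$. Set $Y = X \cap A$; this is an algebraically closed subfield of $X$, so $Y \fgsub B$ and in particular $Y \fgsub A$, so by hypothesis $\hull{Y} \subs A$. By Lemma \ref{Ks has intersections}, $\hull{Y} \strong B$. Applying the definition of self-sufficiency with the finitely generated set $X$ gives
\[ \delta(X \cap \hull{Y}) \le \delta(X). \]
Now $Y \subs X \cap \hull{Y}$ since $Y \subs X$ and $Y \subs \hull{Y}$; conversely $X \cap \hull{Y} \subs X \cap A = Y$ because $\hull{Y} \subs A$. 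Hence $X \cap \hull{Y} = Y = X \cap A$, and the displayed inequality becomes $\delta(X \cap A) \le \delta(X)$. Since $X \fgsub B$ was arbitrary, $A \strong B$.

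There is no real obstacle here; the only thing to be careful about is checking that all intersections entering the $\delta$ inequality are in fact finitely generated objects of $\K$, which follows because they are algebraically closed subfields of the finitely generated object $X$ and so automatically have finite transcendence degree over $C$.
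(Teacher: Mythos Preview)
Your proof is correct and the backward direction is essentially the same argument as the paper's, just phrased directly rather than contrapositively. Your forward direction is actually cleaner than the paper's: you simply observe that $A$ itself appears in the intersection defining $\hull{Y}$, whereas the paper argues the contrapositive by setting $X = \hull{Y}$ and claiming $\delta(X) < \delta(X \cap A)$, which implicitly uses that $\hull{Y}$ realises the minimal value of $\delta$ among finitely generated extensions of $Y$.
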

\begin{proof}
  Suppose $Y \fgsub A$ and $\hull{Y} \not\subs A$. Let $X =
  \hull{Y}$. Then $\delta(X) < \delta(X \cap A)$, so $A \nstrong
  B$. Conversely, suppose $A \nstrong B$, so there is $X \fgsub B$
  such that $\delta(X) < \delta(X \cap A)$. Then $X \cap A$ is
  finitely generated so take $Y = X \cap A$.
\end{proof}

\subsection{The amalgamation property}

\begin{lemma}\label{finitely generated}
A structure is finitely generated in the sense of $\Ks$ iff it is
finitely generated in the sense of $\K$.
\end{lemma}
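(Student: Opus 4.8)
The plan is to show both directions by relating the two notions of generation. First I would recall the characterization from page~\pageref{small in K}: a structure $A \in \K$ is finitely generated in the sense of $\K$ iff $\td(A/C)$ is finite, equivalently iff $A$ is the algebraic closure (in $A$) of $C$ together with a finite set. I would also use Lemma~\ref{Ks has intersections}, which guarantees that hulls $\hull{X}$ are well defined, and Lemma~\ref{hulls for selfsuff}, which identifies self-sufficient embeddings with those closed under taking hulls of finitely generated (in the $\K$-sense) subsets.

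For the easy direction, suppose $B$ is finitely generated in the sense of $\Ks$, say $B = \hull{X}$ for some finite $X \subs B$. Since $\hull{X} = \bigcap\class{A \strong B}{X \subs A}$ and each such $A$ is in particular a $\K$-substructure of $B$ containing $\gen{X}$, we get $\gen{X} \subs \hull{X} = B$; in fact $\gen{X} \strong B$ would already give $B = \gen{X}$, but more simply: $\hull{X}$ is a $\K$-substructure containing the finite set $X$, and every $\K$-substructure containing $X$ contains $\gen{X}$, while $\hull{X}$ is by definition the smallest \emph{self-sufficient} one. What I actually want is just that $B$ has finite transcendence degree over $C$. But $\hull{X}$ is built from $X$ by a closure operation that, at worst, repeatedly adjoins hulls of finitely generated pieces; since $X$ is finite, $\gen{X}$ has finite $\td$ over $C$, and I must check $\hull{X}$ does too. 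The cleanest route: by Lemma~\ref{Ks has intersections} the hull exists, and one shows directly that $\td(\hull{X}/C) < \infty$ because the amalgamation machinery (or rather, the submodularity of $\delta$, Lemma~\ref{submodularity}) bounds $\delta(\hull{X}/C)$ — in a Hrushovski construction $\hull{X}$ is finite-dimensional whenever $X$ is. Concretely, $\delta(\hull X / C)\le \delta(\gen X / C)$ since $\hull X \strong B$ and $\gen X \subs \hull X$; combined with $\grk \ge 0$ this bounds $\td(\hull X/C)$. Hence $\hull{X}$, and so $B$, is finitely generated in the sense of $\K$.

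For the converse, suppose $B$ is finitely generated in the sense of $\K$, so $B = \gen{X}$ for a finite $X$ and $\td(B/C) < \infty$. Then $\hull{X}$ is a self-sufficient substructure of $B$ containing $X$, hence $\gen{X} \subs \hull{X} \subs B$, forcing $\hull{X} = B$. Thus $B$ is generated as a self-sufficient substructure of itself by the finite set $X$, i.e. finitely generated in the sense of $\Ks$.

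The main obstacle is the easy-looking direction: verifying that $\hull{X}$ has finite transcendence degree over $C$ when $X$ is finite. This is exactly where the predimension does its work — one needs that the self-sufficient closure of a finite-dimensional set stays finite-dimensional, which follows from $\delta(\hull X/C) \le \delta(\gen X/C) < \infty$ together with $\td = \delta + \grk$ and $\grk \ge 0$. Once that bound is in hand, both directions are immediate from the characterization of $\K$-finite-generation as finiteness of $\td(-/C)$, and the fact that $\gen{X} \subs \hull{X}$ for any $X$.
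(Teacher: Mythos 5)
The easy direction (a structure $\K$-finitely generated is $\Ks$-finitely generated) is fine, and matches the paper. The issue is the direction you call ``easy'' but which is actually the substantive one: showing that $\hull{X}$ has finite transcendence degree over $C$ whenever $X$ is finite.

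Your key step, ``$\delta(\hull{X}/C) \le \delta(\gen{X}/C)$ since $\hull{X} \strong B$ and $\gen{X} \subs \hull{X}$,'' does not follow from the definition of self-sufficiency. Unwinding $\hull{X} \strong B$ gives: for every $Y \fgsub B$, $\delta(Y \cap \hull{X}) \le \delta(Y)$. Taking $Y = \gen{X}$ just gives $\delta(\gen{X}) \le \delta(\gen{X})$, since $\gen{X} \subs \hull{X}$. To get what you want you would need to take $Y = \hull{X}$, but that requires already knowing $\hull{X} \fgsub B$ --- which is precisely what is being proved. (Also, $\delta$ is only defined for finitely generated structures, so the left-hand side of your inequality is not yet meaningful.) A secondary problem: even granting that inequality, $\td = \delta + \grk$ and $\grk \ge 0$ bounds $\td$ from \emph{below} by $\delta$, not from above, so a bound on $\delta(\hull{X})$ alone would not bound $\td(\hull{X}/C)$.

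The missing idea, which is the paper's argument, is to bypass $\hull{X}$ entirely at first: look at $\class{\delta(A)}{X \subs A \fgsub B}$, a nonempty subset of $\N$, and pick $A$ achieving the minimum. Submodularity (Lemma~\ref{submodularity}) then gives, for any $Y \fgsub B$,
\[
0 \le \delta(A \cup Y) - \delta(A) \le \delta(Y) - \delta(A \cap Y),
\]
the first inequality by minimality of $\delta(A)$, so $A \strong B$. Since $\hull{X}$ is the intersection of all self-sufficient substructures containing $X$, we get $\hull{X} \subs A$, and since $A$ is $\K$-finitely generated and substructures of $\K$-finitely generated structures are $\K$-finitely generated, so is $\hull{X}$. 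Your instinct that ``submodularity of $\delta$ bounds $\hull{X}$'' is correct in spirit, but the concrete step you wrote down is circular and the route through comparing $\delta$-values of $\hull{X}$ and $\gen{X}$ directly cannot be made to work without something like the minimization argument.
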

\begin{proof}
The right to left direction is immediate, since for any set $X$,
$\gen{X} \subs \hull{X}$.

We show that if $B \in \K$ and $X \subs B$ is a finite subset then
$\hull{X}$ is finitely generated in $\K$. Consider
$\class{\delta(A)}{X \subs A \fgsub B}$, a nonempty subset of
$\N$. Let $A$ be such that $\delta(A)$ is least. Then for any $Y
\fgsub B$,
\[0 \le \delta(A \cup Y) - \delta(A) \le \delta(Y) - \delta(A \cap
Y)\]
with the first comparison holding by the minimality of $\delta(A)$ and
the second by submodularity of $\delta$. Thus $A \strong B$. In
particular, $\hull{X} \subs A$, and so $\hull{X}$ is finitely
generated in $\K$.
\end{proof}

We define the category $\Kso$ to be the subcategory of $\Ks$ consisting
of the finitely generated structures, together with all
self-sufficient embeddings. In order to apply the amalgamation
theorem, we need to show that $\Kso$ has the amalgamation property. In
fact, we show more than this, which is necessary when it comes to
axiomatizing the amalgam.
\begin{prop}[Free asymmetric amalgamation]\label{free amalgamation}
  If we have embeddings $A \strong B_1$ and $A \into B_2$ in $\K$
  then there is $E \in \K$ (the \emph{free amalgam} of $B_1$ and
  $B_2$ over $A$) and embeddings $B_1 \into E$ and $B_2 \strong E$
  such that the square
\begin{diagram}[height=2em,width=2em]
&&E\\
&\ruInto&&\luStrong\\
B_1&&&&B_2\\
&\luStrong&&\ruInto\\
&&A
\end{diagram}
commutes, and $E = \gen{B_1,B_2}$. Furthermore, if $A \strong B_2$
then $B_1 \strong E$.
\end{prop}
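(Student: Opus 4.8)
The plan is to construct the free amalgam $E$ on the level of fields first, then equip it with a $\Gamma$-structure, and finally verify the self-sufficiency claims using submodularity of $\delta$. Concretely, I would take the underlying field of $E$ to be the algebraic closure of a free compositum of the underlying fields of $B_1$ and $B_2$ over the underlying field of $A$, inside some large algebraically closed field; this exists because $A \into B_1$ and $A \into B_2$ are embeddings of algebraically closed fields, so we can form $B_1 \otimes_A B_2$, quotient by a prime ideal, and take the algebraic closure. By construction $\td(E/A) = \td(B_1/A) + \td(B_2/A)$, and more generally for finitely generated $X \fgsub B_1$ and $Y \fgsub B_2$ the compositum is free, i.e.\ $\td(\gen{X,Y}/A) = \td(X/A) + \td(Y/A)$ whenever $X \cap A = Y \cap A = A$, and the relevant intersections of subfields behave as expected ($B_1 \cap B_2 = A$ inside $E$).

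Next I would define $\Gamma_S(E)$ for each $S \in \Ss$. The idea is that $E$ should have no more solutions than are forced: using Lemma~\ref{Smax lemma}, every point of $\Gamma_S$ in a finitely generated structure is, up to isogeny and homomorphism, a translate by a structure-point of a \emph{basis} coming from $\Smax$. So one sets $\Smax(E/A)$ to be (isogenous to) $\Smax(B_1/A) \cross \Smax(B_2/A)$ — this is the group-theoretic content of the fact that we are taking a free amalgam and expecting group ranks to add — picks bases $g_1 \in \Gamma_{\Smax(B_1/A)}(B_1)$ and $g_2 \in \Gamma_{\Smax(B_2/A)}(B_2)$, lets $(g_1,g_2)$ be a basis for $\Gamma(E/A)$, and then \emph{defines} $\Gamma_S(E)$, for $S\in\Ss$, to consist exactly of the points $(Tq)(g)\cdot\gamma$ with $q\colon S''\to S$ a homomorphism from something isogenous to this $\Smax$, $g$ the corresponding translate of $(g_1,g_2)$, and $\gamma\in\Gamma_S(A)$. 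One must check this is well-defined (independent of choices up to isogeny, using the uniqueness in Lemma~\ref{Smax lemma}) and that it satisfies U1--U7: U1 is clear; U2--U7 are routine verifications that the prescribed set is a subgroup closed under the relevant $Tf$, compatible with sub- and quotient groups and products, and contains $TS(C)$; and crucially SP holds because, by the very construction, any solution not lying in an $A$-coset of a $TH$ contributes to the group rank, so $\delta(X)\ge 0$ for all $X\fgsub E$. It must also be checked that the inclusions $B_1 \into E$, $B_2 \into E$ are $\L_\Ss$-embeddings, i.e.\ that $\Gamma_S(E)\cap TS(B_i) = \Gamma_S(B_i)$; the $\supseteq$ direction is built in, and $\subseteq$ follows because a solution in $B_i$ that is a translate of the $\Smax(E/A)$-basis must, after projecting away the other coordinate, be accounted for inside $B_i$ by the characterization of $\Smax(B_i/A)$.

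Finally, for the self-sufficiency statements: to show $B_2 \strong E$, take $X \fgsub E$ and argue $\delta(X\cap B_2)\le\delta(X)$ directly, or more slickly reduce to showing that for any $X\fgsub E$ one has $\delta(X/X\cap B_2)\ge 0$ together with relative additivity (Lemma~\ref{relative delta}); the point is that the part of $X$ transcendental over $B_2$ contributes freely to the transcendence degree and the group rank it uses is at most what it contributes — this is where the freeness of the field amalgam and the additivity of $\grk$ over $\Smax$ get used, essentially recapitulating the submodularity computation in Lemma~\ref{submodularity}. For the last sentence, if moreover $A \strong B_2$, then $B_1 \strong E$ follows by the same kind of argument symmetrically, now using $A\strong B_2$ to control the descent of constraints from $E$ into $B_1$: any $X\fgsub E$ with $\delta(X\cap B_1)>\delta(X)$ would produce, after intersecting with $B_2$ and using $A\strong B_2$, a violation of $A\strong B_1$.

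The main obstacle I expect is the careful construction and well-definedness of $\Gamma_S(E)$ together with the verification that the field embeddings lift to $\L_\Ss$-embeddings — i.e.\ that we have genuinely added \emph{no} extra solutions in $E$ beyond those forced by U1--U7 applied to the chosen $\Smax(E/A)$; all the subgroup/quotient/product compatibilities (U4--U7) have to be threaded through Lemma~\ref{Smax lemma}'s normal form simultaneously. The self-sufficiency verifications at the end, by contrast, should be a fairly mechanical submodularity argument once $\grk$ and $\td$ are known to be additive in the amalgam.
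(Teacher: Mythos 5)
Your overall framework is right — free field amalgam, equip it with a $\Gamma$-structure, verify axioms and self-sufficiency — but there is a genuine gap at the heart of the argument, and the $\Gamma_S(E)$ construction is needlessly baroque.

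On the definition of $\Gamma_S(E)$: the paper simply takes $\Gamma_S(E)$ to be the subgroup of $TS(E)$ \emph{generated} by $\Gamma_S(B_1) \cup \Gamma_S(B_2)$. With this definition U1--U7 follow essentially by inspection, and the fact that $B_1, B_2 \into E$ are $\L_\Ss$-embeddings is an easy computation using $B_1 \cap B_2 = A$ (if $h \cdot b \in TS(B_1)$ with $h \in \Gamma_S(B_1)$, $b \in \Gamma_S(B_2)$, then $b \in TS(B_1) \cap TS(B_2) = TS(A)$). Your proposal instead defines $\Gamma_S(E)$ via the normal form of Lemma~\ref{Smax lemma} with $\gamma \in \Gamma_S(A)$; as written this is ambiguous (``translate of $(g_1,g_2)$'' by what?), and verifying that it is well-defined, closed under the group operation, and coincides with the $B_i$-structures on $TS(B_i)$ requires threading the isogeny normal form through every axiom. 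That complication is avoidable.

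The real gap is in the self-sufficiency and SP verification, which you dismiss as ``a fairly mechanical submodularity argument.'' It is not, for two reasons. First, submodularity of $\delta$ (Lemma~\ref{submodularity}) was proved for objects of $\K$, i.e.\ structures already known to satisfy SP; but whether $E$ satisfies SP is precisely one of the things being proved here, so you cannot appeal to submodularity of $\delta$ in $E$ without circularity. The paper is explicit about this: it notes $\delta$ and $\grk$ were defined only for structures with SP, and that here their definitions happen to make sense because the \emph{conclusion} of Lemma~\ref{Smax lemma} holds. Second, the actual argument that $\delta(X/X\cap B_2) \ge 0$ has genuine content that uses the generated-subgroup construction: one takes a basis $g$ of $\Gamma(X/X\cap B_2)$, writes it (by construction of $\Gamma_S(E)$) as $g = h \cdot b$ with $h \in \Gamma_S(B_1)$ and $b \in \Gamma_S(B_2)$, and then chains
$\td(g/X\cap B_2) \ge \td(g/B_2) = \td(h/B_2) = \td(h/A) \ge \grk(h/A)$,
using algebraic disjointness of $B_1$ from $B_2$ over $A$ in the middle and the hypothesis $A \strong B_1$ at the end. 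One must then show $\grk(h/A) = \dim \Smax(X/X\cap B_2)$, which needs a separate contradiction argument (otherwise $g$ lies in a $B_2$-coset of a $TH$, contradicting that $g$ is a basis). None of this is ``mechanical submodularity''; it is the content of the proposition and the place where $A \strong B_1$ is actually consumed. Your proposal never says where $A \strong B_1$ enters the proof of $B_2 \strong E$.

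Finally, the verification that $E$ satisfies SP is also not just ``by the very construction.'' In the paper it is a consequence of the just-proved $B_2 \strong E$ together with $B_2 \models \mathrm{SP}$ (giving $\delta(X) = \delta(X/X\cap B_2) + \delta(X\cap B_2) \ge 0$), followed by a further case analysis to handle the equality case ($\delta(X)=0 \Rightarrow X \subs C$), which requires showing $X \cap B_2 \subs C$ and then that $g$ is independent of $B_2$ over $C$, hence lies in $B_1$, and applying SP in $B_1$. That final step needs to be written out; it does not come for free.
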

\begin{proof}
  Let $\beta_1, \beta_2$ be transcendence bases of $B_1, B_2$ over
  $A$. As a field, take $E$ to be the algebraic closure of the
  extension of $A$ with transcendence base the disjoint union $\beta_1
  \sqcup \beta_2$. This defines the field $E$ and the embeddings $B_1
  \into E$ and $B_2 \into E$ uniquely up to isomorphism, because $B_1$
  and $B_2$ are algebraically disjoint over $A$ in $E$. For each $S
  \in \Ss$, define $\Gamma_S(E)$ to be the subgroup of $TS(E)$
  generated by $\Gamma_S(B_1) \cup \Gamma_S (B_2)$. Axioms U1---U7
  then hold by the construction.

  Let $X$ be a finitely generated algebraically closed substructure of
  $E$. Note that $\delta$ and $\grk$ were originally defined only for
  structures satisfying the Schanuel property, and we do not yet know
  that it holds for $E$. However, the definitions of $\delta$ and
  $\grk$ make sense for $X$ because the conclusion of lemma~\ref{Smax
  lemma} holds, and so $\grk(X)$ is well-defined and finite.

  Let $S = \Smax(X/X \cap B_2)$, and let $g\in \Gamma_S(X)$ be a basis
  for $\Gamma(X/X \cap B_2)$. Then by the construction of
  $\Gamma_S(E)$, there are $h \in \Gamma_S(B_1)$ and $b \in
  \Gamma_S(B_2)$ such that $g = h\cdot b$. The group operation of $S$
  is defined over $C$, so certainly over $B_2$, and so
  \[\td(g/X \cap B_2) \ge \td(g/B_2) = \td(h/B_2) = \td(h/A) \ge \grk(h/A)\]
  with the second equation because $B_1$ is algebraically independent
  of $B_2$ over $A$ and the final comparison because $A \strong B_1$.

  We now show that $\grk(h/A) = \dim S$. If not, then there is $a \in
  TS(A)$ and a proper algebraic subgroup $H$ of $S$ such that $h\cdot
  a^{-1} \in TH(B_1)$. Now $h\cdot a^{-1} = g\cdot (a\cdot b)^{-1}$,
  and $a\cdot b \in TS(B_2)$, so $g$ lies in a $B_2$-coset of
  $TH$. This contradicts the fact that $g$ is a basis for
  $\Gamma(X/X \cap B_2)$. So $\grk(h/A) = \dim S$, and thus
  \[\delta(X/X \cap B_2) \ge \td(g/X \cap B_2) - \dim S \ge 0.\]
  Thus $B_2 \strong E$. The symmetric argument shows that if $A
  \strong B_2$ then $B_1 \strong E$.

  Now $\delta(X \cap B_2) \ge 0$ because $B_2$ satisfies SP, so 
  \[\delta(X) = \delta(X/X\cap B_2) + \delta(X \cap B_2) \ge 0.\]
  Suppose $\delta(X) = 0$. Let $e \in \Gamma_{S'}$ be a basis for
  $\Gamma(X\cap B_2/C)$. Then
  \[0 = \delta(X) = \td(g/C(e)) + \td(e/C) - \dim S - \dim S'\] but,
  since $B_2$ satisfies SP, either $\td(e/C) > \dim S'$ or $X \cap B_2
  \subs C$. By the calculation above, $\td(g/C(e)) \ge \td(g/X \cap
  B_2) \ge \dim S$, so $\td(e/C) \le \dim S'$, and hence $X \cap B_2
  \subs C$. Thus $g$ is independent from $B_2$ over $C$, so $g \in
  B_1$. But then $\td(g/C) = \dim S$, so $X \subs C$ using SP for
  $B_1$. Hence $E$ has SP.
\end{proof}

\subsection{The amalgamation theorem}

 The category $\Ks$ is not the category of all finitely generated models of a universal first order theory, because its objects are all algebraically closed field extensions of a fixed $C$. Thus we must use a more abstract version of the Fraiss\'e amalgamation theorem than that given, for example, in \cite{Hodges93}. We use a variant of the category-theoretic version given in \cite{DG92}. We must explain how some standard notions are translated into this setting.

Fix an ordinal $\lambda$, and consider a category $\Cat$.  A
\emph{chain} of length $\lambda$ in $\Cat$ is a collection $(Z_i)_{i <
\lambda}$ of objects of $\Cat$ together with arrows
$\ra{Z_i}{\gamma_{ij}}{Z_j}$ for each $i \le j < \lambda$, such that
for each $i$, $\lambda_{ii} = 1_{Z_i}$, and if $i \le j \le k <
\lambda$ then $\gamma_{jk} \circ \gamma_{ij} = \gamma_{ik}$. The
\emph{union} or \emph{direct limit} of a $\lambda$-chain is an object
$Z = Z_\lambda$ with arrows $\ra{Z_i}{\gamma_{i\lambda}}{Z}$ for each
$i < \lambda$, satisfying the usual universal property of a direct
limit.

For $\lambda$ an infinite regular cardinal, identified with its
initial ordinal, an object $A$ of $\Cat$ is said to be
\emph{$\lambda$-small} iff for every $\lambda$-chain $(Z_i,
\gamma_{ij})$ in $\Cat$ with direct limit $Z$, any arrow
$\ra{A}{f}{Z}$ factors through the chain, that is, there is $i <
\lambda$ and $\ra{A}{f^*}{Z_i}$ such that $f = \gamma_{i\lambda} \circ
f^*$. For example, in the category of sets a set is $\aleph_0$-small
iff it is finite.  Write $\Cat_{<\lambda}$ for the full subcategory of
$\Cat$ consisting of all the $\lambda$-small objects of $\Cat$, and
$\Cat_{\le \lambda}$ for the full subcategory of $\Cat$ consisting of
all unions of $\lambda$-chains of $\lambda$-small objects.

\begin{defn}
  We say that $\Cat$ is a \emph{$\lambda$-amalgamation category} iff the
  following hold.
  \begin{itemize}
  \item Every arrow in $\Cat$ is a monomorphism.
  \item $\Cat$ has direct limits (unions) of chains of every ordinal
    length up to $\lambda$.
  \item $\Cat_{<\lambda}$ has at most $\lambda$ objects up to
    isomorphism.
  \item For each object $A \in \Cat_{<\lambda}$ there are at most
    $\lambda$ extensions of $A$ in $\Cat_{<\lambda}$, up to isomorphism.
  \item $\Cat_{<\lambda}$ has the \emph{amalgamation property} (AP),
    that is, any diagram of the form
    \begin{diagram}[height=2em,width=2em]
      B_1 &&&&B_2\\
      &\luTo&& \ruTo\\
      &&A
    \end{diagram}
    can be completed to a commuting square
    \begin{diagram}[height=2em,width=2em]
      &&C\\
      &\ruTo&&\luTo\\
      B_1 &&&&B_2\\
      &\luTo&& \ruTo\\
      &&A
    \end{diagram}
    in $\Cat_{<\lambda}$.
  \item $\Cat_{<\lambda}$ has the \emph{joint embedding property} (JEP),
    that is, for every $B_1,B_2 \in \Cat_{<\lambda}$ there is $C \in
    \Cat_{<\lambda}$ and arrows
    \begin{diagram}[height=2em,width=2em]
      &&C\\
      &\ruTo&&\luTo\\
      B_1 &&&&B_2\\
    \end{diagram}
    in $\Cat_{<\lambda}$.
  \end{itemize}
\end{defn}

An \emph{extension} of $A$ is simply an arrow with domain $A$. To say
that two extensions $\ra{A}{f}{B}$ and $\ra{A}{f'}{B'}$ are isomorphic
means that there is an isomorphism $\ra{B}{g}{B'}$ such that $f'=gf$.
In \cite{DG92}, Droste and G\"obel consider a stronger condition than
bounding the number of extensions of each $A$, namely that for any
pair of objects $A$ and $B$ there are only $\lambda$ arrows from $A$
to $B$. This allows them to use the pre-existing definition of a
$\lambda$-algebroidal category, but it is not strong enough for our
purposes. For example, if $A$ is a pure algebraically closed field
extension of $C$ of transcendence degree one then there are
$2^{\aleph_0}$ embeddings of $A$ into itself over $C$, but they are
all isomorphisms, and hence isomorphic extensions. The condition
bounding only the number of extensions is model-theoretically much
more natural.

To say that an object $U$ of $\Cat$ is $\Cat_{\le\lambda}$-universal means
that for every object $A \in \Cat_{\le\lambda}$ there is an arrow
$\ra{A}{}{U}$ in $\Cat$.  To say that $U$ is $\Cat_{<\lambda}$-saturated
means that for any $A,B \in \Cat_{<\lambda}$ and any arrows
$\ra{A}{f}{U}$ and $\ra{A}{g}{B}$ there is an arrow $\ra{B}{h}{U}$
such that $h \circ g = f$. These are just the translations into
category-theoretic language of the usual model-theoretic notions.

\begin{theorem}[Amalgamation theorem]\label{amalgamation theorem}
  If $\Cat$ is a $\lambda$-amalgamation category then there is an object
  $U \in \Cat_{\le\lambda}$, the ``Fraiss\'e limit'', which is $\Cat_{\le\lambda}$-universal and
  $\Cat_{<\lambda}$-saturated. Furthermore, $U$ is unique up to
  isomorphism.
\end{theorem}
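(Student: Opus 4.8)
The plan is to construct $U$ as the direct limit of a carefully built $\lambda$-chain of $\lambda$-small objects, and then verify universality, saturation, and uniqueness by back-and-forth. First I would do some bookkeeping: since $\Cat_{<\lambda}$ has at most $\lambda$ objects up to isomorphism and each has at most $\lambda$ extensions up to isomorphism, the collection of all ``extension problems'' — triples consisting of an object $A \in \Cat_{<\lambda}$, an extension $\ra{A}{g}{B}$ with $B \in \Cat_{<\lambda}$, together with the data needed to recognise a copy of $A$ inside a partial limit — forms a set of size at most $\lambda$. I would fix an enumeration $(A_i, g_i : A_i \to B_i)_{i < \lambda}$ of representatives of these problems in which every problem recurs cofinally often (using that $\lambda$ is regular, so $\lambda \times \lambda$ reorders to $\lambda$).

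Next I would build the chain $(Z_i, \gamma_{ij})_{i < \lambda}$ by transfinite recursion. Start with $Z_0$ any object of $\Cat_{<\lambda}$ (or the JEP-amalgam of a chosen pair, to guarantee non-triviality). At a successor stage $i+1$: if there is an arrow $\ra{A_i}{f}{Z_i}$ realising the $i$-th extension problem, use the amalgamation property on the span $Z_i \leftarrow A_i \rightarrow B_i$ to obtain $Z_{i+1} \in \Cat_{<\lambda}$ with arrows $Z_i \to Z_{i+1}$ and $B_i \to Z_{i+1}$ making the square commute; otherwise set $Z_{i+1} = Z_i$ with $\gamma_{i,i+1} = 1$. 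At a limit stage $i$, let $Z_i$ be the direct limit of the chain built so far — this exists and lies in $\Cat_{\le\lambda}$; and since the chain has length $< \lambda$ and consists of $\lambda$-small objects, $Z_i$ is again $\lambda$-small (this is the one place where I need that a $\lambda$-chain of $\lambda$-small objects of length $<\lambda$ has a $\lambda$-small union, which follows from regularity of $\lambda$ by a standard cofinality argument on factoring arrows out of a limit). Finally set $U = Z_\lambda$, the direct limit of the whole $\lambda$-chain, which lies in $\Cat_{\le\lambda}$ by definition of that subcategory.

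For $\Cat_{<\lambda}$-saturation: given $A, B \in \Cat_{<\lambda}$ and arrows $\ra{A}{f}{U}$ and $\ra{A}{g}{B}$, since $A$ is $\lambda$-small the arrow $f$ factors through some $\ra{A}{f^*}{Z_j}$. The extension problem $(A, g : A \to B)$ equals $(A_i, g_i : A_i \to B_i)$ for cofinally many $i$, so pick such an $i \ge j$; then $A_i \to Z_i$ exists (namely $\gamma_{ji} \circ f^*$, after identifying along the isomorphism of extension problems), so at stage $i+1$ the construction amalgamated in $B_i \cong B$, giving an arrow $B \to Z_{i+1} \to U$ whose composite with $g$ agrees with $f$. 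For $\Cat_{\le\lambda}$-universality: any $A \in \Cat_{\le\lambda}$ is the union of a $\lambda$-chain of $\lambda$-small pieces $A = \colim_\mu C_\mu$; build an arrow $A \to U$ by transfinitely choosing compatible arrows $C_\mu \to U$ using saturation at each successor (to extend across $C_\mu \strong C_{\mu+1}$, though here one only needs arbitrary arrows, applying saturation with the span $C_\mu \to U$, $C_\mu \to C_{\mu+1}$) and taking limits at limit stages; at the base use the JEP or simply that $Z_0$ embeds, then saturation. Uniqueness up to isomorphism is the usual back-and-forth between two such $U$, $U'$: both are $\Cat_{<\lambda}$-saturated and $\Cat_{\le\lambda}$-universal, so one alternately extends a partial isomorphism along the $\lambda$-small approximations of each, using $\lambda$-smallness to keep factoring through finite stages and regularity of $\lambda$ to close up at limits.

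The main obstacle I expect is not any single step but the category-theoretic hygiene around limit stages: ensuring that the union of a short chain of $\lambda$-small objects is $\lambda$-small, that the transition arrows into $U$ assemble coherently, and — most delicately — that ``realising an extension problem $(A_i, g_i)$'' is stable under the isomorphisms of extension problems used in the enumeration, so that the cofinal-recurrence bookkeeping actually does its job. In the classical Fraïssé setting these are automatic because objects are genuine finite structures; here, with $\Cat_{<\lambda}$ an abstract category whose objects carry a proper class of self-embeddings (as the authors note for pure transcendental extensions), I must phrase everything in terms of arrows and factorisations rather than subsets, and check that bounding extensions *up to isomorphism* (rather than bounding hom-sets) still suffices to make the enumeration have size $\lambda$ — which it does, precisely because saturation only ever needs to solve each extension problem once.
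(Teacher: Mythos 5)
The paper's own ``proof'' is just a citation: it notes that the argument of Droste and G\"obel \cite{DG92} goes through under the weaker hypothesis (bounding extensions up to isomorphism rather than bounding hom-sets). Your proposal attempts to supply that argument directly by the standard Fra\"iss\'e chain construction, which is the right general shape, but it stumbles on precisely the point the paper flags.

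The gap is in the bookkeeping for the chain. You enumerate abstract extension problems $(A_i,\, g_i\colon A_i\to B_i)$ up to isomorphism, and at successor stage $i+1$ you say: \emph{if} there is an arrow $A_i\to Z_i$, amalgamate $B_i$ with $Z_i$ over $A_i$ along \emph{that} arrow. But there may be more than $\lambda$ arrows $A_i\to Z_i$ --- this is exactly the phenomenon the paper points to with the example of a transcendence-degree-one algebraically closed extension having $2^{\aleph_0}$ self-embeddings over $C$. Your construction picks one such arrow at stage $i$. In the saturation step you then take a specific $f\colon A\to U$ factoring through $f^*\colon A\to Z_j$ and claim that some later stage $i\ge j$ handled the arrow $\gamma_{ji}\circ f^*$. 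It did not: stage $i$ handled whichever arrow $A_i\to Z_i$ it happened to choose, and amalgamating $B$ into $Z_{i+1}$ along one arrow $A_i\to Z_i$ gives no control over the others, since the arrows need not differ by an automorphism of $B$ compatible with $g$. Your closing remark that ``saturation only ever needs to solve each extension problem once'' is the false step: it needs to be solved once \emph{for each arrow into the chain}, and there may be too many of those to enumerate.

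The fix --- and the actual content of the paper's remark that the DG92 argument ``goes through'' --- is to enumerate not abstract extension problems but pairs $(j,\, \alpha\colon Z_j\to E)$ where $Z_j$ is an object already built in the chain and $\alpha$ ranges over its $\lambda$-many extensions in $\Cat_{<\lambda}$ up to isomorphism, interleaving these queues into a single $\lambda$-chain by regularity of $\lambda$. At stage $i$ you amalgamate $E$ with $Z_i$ over $Z_j$ using the \emph{canonical} transition arrow $\gamma_{ji}$, so there is no choice of arrow to make. For saturation, given $f\colon A\to U$ factoring as $\gamma_{j\lambda}\circ f^*$ and $g\colon A\to B$, you first amalgamate $Z_j\leftarrow A\rightarrow B$ to get $\alpha\colon Z_j\to E$ and $\beta\colon B\to E$ with $\alpha f^* = \beta g$, and then appeal to richness of the chain with respect to extensions of $Z_j$ to realise $\alpha$ at some later stage; the composite $B\to E\to Z_{i+1}\to U$ then satisfies the required commutativity. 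Realising one representative $\alpha$ of an isomorphism class of extensions of $Z_j$ does solve the problem for all of them, because an isomorphism of extensions over $Z_j$ is exactly what lets you transport the solution, and $Z_j$ sits in the chain by a canonical arrow. That is the sense in which ``bounding extensions up to isomorphism'' suffices; your version, which decouples the source $A_i$ from the chain, loses this canonicity and the argument no longer closes.
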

\begin{proof}
  The proof in \cite{DG92} goes through, even with the slightly weaker
  hypothesis bounding the number of extensions rather than the number
  of arrows.
\end{proof}

The notion of $\aleph_0$-small is the same as finitely generated in
our example.
\begin{lemma}\label{aleph_0 small}
  An object $A$ of $\K$ is $\aleph_0$-small in $\K$ or in $\Ks$ iff it
  is finitely generated (that is, iff $\td(A/C)$ is finite).
\end{lemma}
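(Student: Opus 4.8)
The plan is to prove both implications and to handle $\K$ and $\Ks$ in parallel. I will use two facts established earlier: by the discussion preceding the lemma together with Lemma~\ref{finitely generated}, ``finitely generated'' has the same meaning in $\K$ and in $\Ks$, namely $\td(A/C)<\aleph_0$; and a direct limit of an $\omega$-chain in either category is formed as the union of the terms (identified with their images), this union being again an algebraically closed field, hence an object of $\K$, because an increasing union of algebraically closed fields is algebraically closed, and a union of a chain of models of the universal part of $T_\Ss^U$ is again such a model.

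For the direction ``finitely generated $\Rightarrow$ $\aleph_0$-small'', suppose $A=\gen X$ with $X$ finite, let $(Z_i,\gamma_{ij})_{i<\omega}$ be an $\omega$-chain with direct limit $Z=\bigcup_i Z_i$, and let $\ra A f Z$ be an arrow. Since $X$ is finite and the $Z_i$ form an increasing sequence of substructures of $Z$ with union $Z$, we have $f(X)\subs Z_i$ for some $i$. Now $f$ identifies $A=\acl^A(C\cup X)$ with the substructure $f(A)$ of $Z$, and because relative algebraic closure over $C$ is absolute between algebraically closed fields, $f(A)=\gen{f(X)}^{Z}=\gen{f(X)}^{Z_i}\subs Z_i$; as $f(A)$ and $Z_i$ are both substructures of $Z$ with $f(A)\subs Z_i$, $f(A)$ is in fact a substructure of $Z_i$, so $f$ factors as $A\to Z_i\into Z$, witnessing that $A$ is $\aleph_0$-small in $\K$. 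In $\Ks$ the factoring arrow $A\to Z_i$ is automatically self-sufficient: its image $f(A)$ is self-sufficient in $Z$, and for any $Y\fgsub Z_i$ we have $Y\fgsub Z$, whence $\delta(Y\cap f(A))\le\delta(Y)$, so $f(A)\strong Z_i$. Thus $A$ is $\aleph_0$-small in $\K$ and in $\Ks$.

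For the converse, suppose $\td(A/C)=\aleph_0$ and fix an infinite transcendence basis $(t_n)_{n\ge 1}$ of $A$ over $C$. Working in $\K$, set $A_n=\gen{t_1,\dots,t_n}$; working in $\Ks$, set $A_n=\hull{t_1,\dots,t_n}$, which is finitely generated by Lemma~\ref{finitely generated} and satisfies $A_n\strong A$ by Lemma~\ref{hulls for selfsuff}. In either case $\gen{\cdot}$ (resp.\ $\hull{\cdot}$) is monotone in its argument, so $A_m\subs A_n$ for $m\le n$, and in the $\Ks$ case $A_m\strong A_n$ because $A_m\strong A$ and $A_m\subs A_n\subs A$. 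Hence $(A_n)_{n\ge 1}$ is an $\omega$-chain in $\K$ (resp.\ $\Ks$) whose union is an algebraically closed subfield of $A$ containing $C$ and every $t_n$, and therefore equals $A$; so $A$ is the direct limit of this chain. The identity arrow $\ra A {\id} A$ cannot factor through any $A_n$, since a factorization $\id=(A_n\into A)\circ g$ would force $A\subs A_n$, while $\td(A_n/C)=n<\aleph_0=\td(A/C)$. So $A$ is not $\aleph_0$-small, in $\K$ or in $\Ks$.

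The content of the argument is just that a finite generating set must already appear at some finite stage of a chain, whereas an infinite transcendence basis never does; the rest is bookkeeping. The only points needing a little care, and the mildest obstacle, are the $\Ks$-side verifications: that self-sufficiency of a substructure of $Z$ descends to any chain term $Z_i$ containing it and propagates down the chain of hulls $A_n$, and that direct limits of chains in $\Ks$ are again computed as unions. All of these are routine consequences of submodularity of $\delta$ and of the characterisations of hulls and of finite generation already in hand.
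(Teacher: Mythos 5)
Your argument follows the paper's proof closely in both directions. The easy direction (finitely generated $\Rightarrow$ $\aleph_0$-small) matches, and your explicit verification that the factoring arrow $A \to Z_i$ is itself a self-sufficient embedding in the $\Ks$ case is a useful detail that the paper dismisses with ``this argument works for both categories.'' The one genuine gap is in the converse: you open with ``suppose $\td(A/C)=\aleph_0$,'' but the lemma asserts that $A$ fails to be $\aleph_0$-small whenever $\td(A/C)$ is infinite, and objects of $\K$ need not have countable transcendence degree over $C$ (take any large algebraically closed extension of $C$ with $\Gamma_S = TS(C)$ for every $S$). When $\td(A/C) > \aleph_0$, a countable sequence of transcendentals $(t_n)_{n\ge 1}$ cannot be a transcendence base, so the chain $A_n = \gen{t_1,\ldots,t_n}$ (or $\hull{t_1,\ldots,t_n}$) has union of transcendence degree only $\aleph_0$ over $C$; its direct limit is therefore a proper subobject of $A$, and your non-factoring argument does not apply to $A$ itself. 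The fix is exactly what the paper does: choose a transcendence base of the form $X \cup \{x_j\}_{j<\omega}$, so that the possibly uncountable set $X$ rides along in every term, and set $Z_i = \gen{X \cup \{x_j\}_{j\le i}}$ (resp.\ $W_i = \hull{X \cup \{x_j\}_{j\le i}}$), ensuring the union of the chain really is $A$.
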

\begin{proof}
  If $A$ is finitely generated by $x_1,\ldots,x_n$ and $A \into Z$
  where $Z$ is the union of an $\omega$-chain $(Z_i)_{i <\omega}$ then
  each $x_j$ lies in some $Z_{i(j)}$, so taking $i$ greater than each
  $i(j)$ the embedding factors through $Z_i$. This argument works for
  both categories $\K$ and $\Ks$.

  Conversely, if $\td(A/C)$ is infinite, let $X \cup \{x_j\}_{j <
  \omega}$ be an transcendence base for $A$ over $C$, and let $Z_i =
  \gen{X \cup \class{x_j}{j \le i}}$. Then $A$ is the union of the
  chain $(Z_i)$ in $\K$, but is not equal to any of the $Z_i$. Hence
  it is not $\aleph_0$-small in $\K$.

  Now let $W_i = \hull{X \cup \class{x_j}{j \le i}}$. By
  lemma~\ref{finitely generated} together with the existence of free
  amalgams, $\td(\hull{B}/B)$ is finite for any $B$. Thus $W_i$ is an
  $\omega$-chain in $\Ks$, with a strictly increasing cofinal subchain
  and union $A$, and so $A$ is not $\aleph_0$-small in $\Ks$.
\end{proof}

\begin{lemma}\label{finitely generated extensions}
  Let $A \in \K$ and let $B$ be a self-sufficient extension of $A$
  which is finitely generated over $A$. Then $B$ is determined up to
  isomorphism by $\Smax(B/A)$, the algebraic locus $\loc_A(g)$ of a
  basis $g$ for $\Gamma(B/A)$, and the natural number $\td(B/A(g))$.
\end{lemma}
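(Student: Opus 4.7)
The plan is to prove that any two self-sufficient extensions $B_1, B_2$ of $A$ in $\K$, finitely generated over $A$ and sharing the same data---namely the isogeny class $S := \Smax(B_i/A)$, the locus $V := \loc_A(g_i) \subs TS$ of a basis $g_i$ of $\Gamma(B_i/A)$, and the transcendence degree $n := \td(B_i/A(g_i))$---are isomorphic as $\L_\Ss$-structures over $A$.

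First I would construct a field isomorphism $\phi: B_1 \to B_2$ over $A$ sending $g_1 \mapsto g_2$. Since $g_1$ and $g_2$ are generic points of the common $A$-variety $V$, there is a field isomorphism $A(g_1) \to A(g_2)$ over $A$ with $g_1 \mapsto g_2$. Each $B_i$ is algebraically closed with $\td(B_i/A(g_i)) = n$, so $B_i$ is the algebraic closure of a purely transcendental extension of $A(g_i)$ of degree $n$; any two such extensions of isomorphic base fields are isomorphic over the common base, so the isomorphism extends to $\phi: B_1 \to B_2$ over $A$.

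Second, I would verify that $\phi$ respects the predicates $\Gamma_{S'}$ for every $S' \in \Ss$. Given $g' \in \Gamma_{S'}(B_1)$, Lemma \ref{Smax lemma} (inspecting its proof) supplies $R \in \Ss$, a direct isogeny $f: R \to S$, a point $h \in \Gamma_R(B_1)$ satisfying $(Tf)(h) = g_1 \cdot \alpha$ for some $\alpha \in \Gamma_S(A)$, a homomorphism $q: R \to S'$, and $\gamma \in \Gamma_{S'}(A)$, with $g' = (Tq)(h) \cdot \gamma$. All of $f, q$ and the group operations are defined over $C_0 \subs A$, so $\phi$ (which fixes $A$ pointwise) commutes with $Tf$ and $Tq$ on points and fixes $\alpha, \gamma$. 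Hence $(Tf)(\phi(h)) = \phi((Tf)(h)) = g_2 \cdot \alpha \in \Gamma_S(B_2)$. By the isogeny clause of axiom U5, $\Gamma_R(B_2) = (Tf)^{-1}(\Gamma_S(B_2))$, so $\phi(h) \in \Gamma_R(B_2)$, and then the first clause of U5 together with U2 gives $\phi(g') = (Tq)(\phi(h)) \cdot \gamma \in \Gamma_{S'}(B_2)$. The symmetric argument with $\phi^{-1}$ yields the reverse inclusion.

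The main obstacle is that in Lemma \ref{Smax lemma} the auxiliary point $h$ need not lie in $\Gamma_S$ itself, but only in some $\Gamma_R$ with $R$ isogenous to $S$, so $\phi(h)$ cannot be identified directly with a translate of $g_2$. This is precisely what the second (isogeny) clause of U5 is designed to handle: it identifies $\Gamma_R$ as the full preimage of $\Gamma_S$ under $Tf$, reducing the problem to checking that $(Tf)(\phi(h)) \in \Gamma_S(B_2)$, which follows from $\phi(g_1) = g_2$ together with $\alpha \in \Gamma_S(A) \subs \Gamma_S(B_2)$.
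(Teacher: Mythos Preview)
Your proof is correct and follows the same approach as the paper: determine the underlying field by transcendence degree, then use Lemma~\ref{Smax lemma} to pin down all the $\Gamma_{S'}$ predicates. The paper's proof is a two-sentence sketch that simply cites Lemma~\ref{Smax lemma} for the second step; you have unpacked that citation carefully, and in particular your handling of the isogeny issue via the second clause of axiom U5 makes explicit exactly what the paper leaves to the reader.
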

\begin{proof}
  As a field extension, $B$ is determined by its transcendence degree
  over $A$. By lemma~\ref{Smax lemma}, the points of $\Gamma_S(B)$ for
  each $S \in \Ss$ are determined by $\Smax(B/A)$ and the basis $g$.
\end{proof}

\begin{prop}\label{Ks is amalg cat}
$\Ks$ is an $\aleph_0$-amalgamation category.
\end{prop}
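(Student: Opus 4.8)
The plan is to verify the six bullet points in the definition of an $\aleph_0$-amalgamation category for $\Cat = \Ks$ one at a time, with the bulk of the real work already in hand. First, every arrow in $\Ks$ is an embedding of $\L_\Ss$-structures, hence a monomorphism, so the first bullet is immediate. For direct limits of chains of length up to $\aleph_0$, i.e. of $\omega$-chains: given a chain $(A_i)_{i<\omega}$ with self-sufficient transition maps, form the union $A$ of the underlying $\L_\Ss$-structures (which is again a model of $T_\Ss^U$, since that theory is $\Loo$-axiomatizable and in particular closed under unions of chains, the field being a union of algebraically closed fields hence algebraically closed, and $C$ being fixed); then I must check that each $A_i \strong A$. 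This is a finite-character argument: any $X \fgsub A$ lies in some $A_j$, and by composing the self-sufficient embeddings $A_i \strong A_j$ one reduces to the known fact that self-sufficiency passes along chains, so $\delta(X\cap A_i) = \delta(X \cap A_i \cap A_j) \le \delta(X \cap A_j) \le \delta(X)$, using $A_i \strong A_j$ and then $A_j \strong A$ — but since $A$ itself is what we are building, the cleanest route is to check $A_i \strong A$ directly using $A_i\strong A_j$ for all $j\ge i$ together with the fact that $X\cap A = X\cap A_j$ for $j$ large.

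Next, the smallness/cardinality conditions: by Lemma~\ref{aleph_0 small}, $\Ks_{<\aleph_0}$ consists exactly of the finitely generated objects, those with $\td(A/C)$ finite. Such an $A$ is the algebraic closure of $C$ together with finitely many elements, and by Lemma~\ref{finitely generated extensions} (applied with the base $C$) each is determined up to isomorphism by $\Smax(A/C)$ (a semiabelian variety in $\Ss$, up to isogeny — countably many choices since $\Ss$ is a countable collection), the algebraic locus over $C$ of a basis $g$ (a subvariety of $T\Smax(A)$ defined over the countable field $C$ — countably many), and a natural number $\td(A/C(g))$. Hence there are only countably many isomorphism types, giving the "$\le \aleph_0$ objects" bullet; the same lemma applied relative to an arbitrary finitely generated base $A$ shows $A$ has only countably many finitely generated self-sufficient extensions up to isomorphism, giving the "$\le\aleph_0$ extensions" bullet. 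The joint embedding property follows from the amalgamation property together with the existence of the object $C$ itself (which is in $\Ks_{<\aleph_0}$ and embeds self-sufficiently into every object, since $\delta(X\cap C) = \delta(C) = 0 \le \delta(X)$ for every $X\fgsub B$ by the Schanuel property), amalgamating $B_1$ and $B_2$ over $C$.

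The one substantive point is the amalgamation property for $\Ks_{<\aleph_0}$, and this is essentially already proved: given self-sufficient embeddings $A\strong B_1$ and $A\strong B_2$ with $A,B_1,B_2$ finitely generated, Proposition~\ref{free amalgamation} produces the free amalgam $E = \gen{B_1,B_2} \in \K$ with $B_1\strong E$ and $B_2\strong E$, and $E$ is finitely generated over $A$ hence finitely generated, so $E\in\Ks_{<\aleph_0}$ and the square commutes in $\Ks_{<\aleph_0}$. So the main obstacle — amalgamation — is not really an obstacle here; the only thing requiring a little care is the verification that $\Ks$ has unions of $\omega$-chains with the transition maps remaining self-sufficient in the limit, which is the finite-character argument sketched above. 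I would write the proof as a short enumeration of the bullets, citing Lemma~\ref{aleph_0 small}, Lemma~\ref{finitely generated extensions}, and Proposition~\ref{free amalgamation}, and spelling out only the chain-union verification in detail.
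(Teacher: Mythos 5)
Your proof takes essentially the same route as the paper's: monomorphisms from concreteness, chain-unions by a finite-character check, cardinality bounds from Lemma~\ref{aleph_0 small} and Lemma~\ref{finitely generated extensions}, amalgamation from Proposition~\ref{free amalgamation}, and joint embedding via the fact that $C$ embeds self-sufficiently into everything. The only differences are cosmetic (you spell out the $\omega$-chain verification that the paper waves away as ``easy to see''), though I would tighten the parenthetical: $\Loo$-axiomatizability per se does not give closure under unions of chains — what makes the argument go is that the axioms other than ACF are universal, ACF is preserved under unions of algebraically closed subfields, $C$ is held fixed, and SP is preserved because any finite witness sits in some $A_j$ of the chain.
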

\begin{proof}
  Every embedding in $\Ks$ is certainly a monomorphism, because $\Ks$
  is a concrete category and the underlying function is injective. It
  is also easy to see that $\Ks$ has unions of chains of any ordinal
  length, and in particular unions of $\omega$-chains.

  There are only countably many $S \in \Ss$, and only countably many
  algebraic varieties defined over $A$, so by lemma~\ref{finitely
  generated extensions} there are only countably many self-sufficient
  extensions of $A$. The structure $C$ embeds self-sufficiently into
  every $B \in \Ks$, so taking $A = C$ it follows in particular that
  $\Kso$ has only countably many objects.  The amalgamation property
  for $\Kso$ is given by proposition~\ref{free amalgamation}, and the
  joint embedding property follows from the amalgamation property,
  again since $C$ embeds self-sufficiently into each $B \in \K$.
\end{proof}

Putting proposition~\ref{Ks is amalg cat} and
theorem~\ref{amalgamation theorem} together, we get the universal
structure we want.
\begin{theorem}\label{amalgam is unique}
  There is a countable model $U$ of $T_\Ss^U$ which is universal and
  saturated with respect to self-sufficient embeddings. Furthermore,
  $U$ is unique up to isomorphism.\qed
\end{theorem}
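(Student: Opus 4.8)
The plan is to verify that the category $\Ks$ satisfies all the clauses in the definition of an $\aleph_0$-amalgamation category, and then quote the abstract amalgamation theorem~\ref{amalgamation theorem} (with $\lambda = \aleph_0$) to produce the Fraïss\'e limit $U$, which will be $\Ks_{\le\aleph_0}$-universal and $\Kso$-saturated and unique up to isomorphism. Essentially all the ingredients have already been assembled in the preceding lemmas, so the proof is mostly a matter of citing them in the right order: this is exactly what proposition~\ref{Ks is amalg cat} does.

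First I would recall that every arrow of $\Ks$ is a monomorphism, since $\Ks$ is a concrete category whose arrows are injective $\L_\Ss$-embeddings. Next, $\Ks$ has unions of $\omega$-chains (indeed of chains of every ordinal length): given a chain $(Z_i)$ of self-sufficient embeddings, the field-theoretic union carries an obvious $\L_\Ss$-structure, it satisfies $T_\Ss^U$ (the Schanuel property SP is preserved under unions of self-sufficient chains because $\delta$ of a finitely generated piece is already witnessed at a finite stage), and each $Z_i$ is self-sufficient in the union. Then I would invoke lemma~\ref{aleph_0 small} to identify the $\aleph_0$-small objects of $\Ks$ with the finitely generated ones, i.e.\ those $A$ with $\td(A/C)<\aleph_0$; so $\Ks_{<\aleph_0} = \Kso$. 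Lemma~\ref{finitely generated extensions} shows that a finitely generated self-sufficient extension of $A$ is determined up to isomorphism by the finite data $(\Smax(B/A), \loc_A(g), \td(B/A(g)))$; since there are only countably many $S \in \Ss$ and only countably many algebraic varieties over a fixed countable $A$, there are at most $\aleph_0$ extensions of each $A$ in $\Kso$, and taking $A = C$ (which embeds self-sufficiently in every object) gives that $\Kso$ has at most $\aleph_0$ objects up to isomorphism.

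For the amalgamation property of $\Kso$: given $B_1 \strong A'$ wait --- given self-sufficient embeddings $B_1 \strong A \strong$, I mean given $A \strong B_1$ and $A \strong B_2$ in $\Kso$, proposition~\ref{free amalgamation} (free asymmetric amalgamation) produces the free amalgam $E = \gen{B_1,B_2}$ in $\K$ with $B_1 \strong E$ and $B_2 \strong E$; since $\td(E/C)$ is finite, $E \in \Kso$, and the square commutes, so AP holds in $\Kso$. The joint embedding property follows from AP because $C$ embeds self-sufficiently into every object of $\Ks$, so any two objects can be amalgamated over $C$. This completes the verification that $\Ks$ is an $\aleph_0$-amalgamation category (proposition~\ref{Ks is amalg cat}). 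Applying theorem~\ref{amalgamation theorem} then yields the countable $U \in \Ks_{\le\aleph_0}$ that is $\Ks_{\le\aleph_0}$-universal and $\Kso$-saturated, unique up to isomorphism; and since $U$ is a union of a chain of objects of $\Kso$, each of which is a model of the universal theory $T_\Ss^U$, and $T_\Ss^U$ is preserved under such unions (again using that SP is preserved under self-sufficient chains), $U$ is itself a model of $T_\Ss^U$, as required.

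The main obstacle, such as it is, is not in any one clause but in having set up the abstract framework so that the pieces fit: the genuinely substantive inputs --- submodularity of $\delta$ (lemma~\ref{submodularity}), the stability of SP under free amalgamation (proposition~\ref{free amalgamation}), and the finiteness of the classifying data for self-sufficient extensions (lemmas~\ref{finitely generated extensions} and~\ref{aleph_0 small}) --- have all been proved already, so at this point the theorem is essentially a bookkeeping exercise of quoting proposition~\ref{Ks is amalg cat} and theorem~\ref{amalgamation theorem}. The one point that deserves a sentence of care is checking that the direct limit $U$ still satisfies $T_\Ss^U$ (in particular SP), since the amalgamation theorem is purely category-theoretic and does not know about the axioms; this is handled by the finite-character observation that any potential failure of $\delta \ge 0$ on $U$ would occur inside some $X \fgsub U$, hence inside some stage of the chain, where SP holds.
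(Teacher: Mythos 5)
Your proposal is correct and follows exactly the same route as the paper: the theorem is stated with an immediate \qed, preceded by the remark that it follows by combining Proposition~\ref{Ks is amalg cat} with Theorem~\ref{amalgamation theorem}, and your write-up simply unpacks the verification of the amalgamation-category clauses (which is the content of Proposition~\ref{Ks is amalg cat}) before invoking the abstract Fra\"{\i}ss\'e theorem. Your extra sentence checking that the direct limit $U$ still satisfies SP by a finite-character argument is a sensible explicit note on a point the paper delegates to the reader via the ``$\Ks$ has unions of chains'' clause, and it is correct.
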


Note that this Fraiss\'e limit $U$ is a union of an $\omega$-chain of countable
structures, hence is countable. Every countable model of $T_\Ss^U$ can
be self-sufficiently embedded in some $A \in \K_{\le\aleph_0}$, by
extending the constant field and taking the algebraic closure. Thus
the $\Ks_{\le\aleph_0}$-universality of $U$ implies that every
countable model of $T_\Ss^U$ can be self-sufficiently embedded into
$U$. Similarly, $U$ is saturated with respect to self-sufficient
embeddings for any self-sufficient substructures of finite
transcendence degree.

\subsection{Pregeometry and dimension}\label{pregeometry section}

The geometry of the Fraiss\'e limit $U$ is controlled by a pregeometry, which we
now describe. For any model $M$ of $T_\Ss^U$, in particular $U$, the
predimension function $\delta$ gives rise to a \emph{dimension} notion
on $M$. The dimension function is conventially denoted $d$ and is
defined as follows.

\begin{defn}
  For $X \finsub M$ (or even $X \subs M$ with $\td(X/C)$ finite),
  define $d(X) = \delta(\hull{X})$ or, equivalently, $d(X)= \min
  \class{\delta(XY)}{Y \finsub M}$.\\
  For $X$ as above and any $A \subs M$, the dimension of $X$ over $A$
  is defined to be
  \[d(X/A) = \min \class{d(XY) - d(Y)}{Y \finsub A}.\]
\end{defn}
Note that $d(X) = d(X/\emptyset)$, so the two definitions agree.

\begin{lemma}[Properties of d]\label{Properties of d}
  Let $X,Y \finsub M$ and $A,B \subs M$.
  \begin{enumerate}
  \item If $X \subs Y$ then $d(X/A) \le d(Y/A)$.
  \item If $A \subs B$ then $d(X/A) \ge d(X/B)$.
  \item $d$ is submodular: $d(XY) + d(X \cap Y) \le d(X) + d(Y)$.
  \item $d(X/Y) = d(XY) - d(Y)$.
  \item $d(X) \ge 0$, with equality iff $X \subs C$.
  \item For any $x \in M$, $d(x/A) = 0$ or $1$.
  \end{enumerate}
\end{lemma}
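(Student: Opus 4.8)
The plan is to derive all six properties from the defining formula $d(X) = \delta(\hull{X})$, the submodularity of $\delta$ (lemma~\ref{submodularity}), and the basic behaviour of $\delta$ and $\grk$ established earlier (lemmas~\ref{relative delta} and~\ref{hulls for selfsuff}, and the remark that $\delta(A)\ge 0$ with equality iff $A = C$). The key preliminary observation is the alternative description $d(X) = \min\class{\delta(XY)}{Y\finsub M}$: indeed $\hull{X}$ is finitely generated by lemma~\ref{finitely generated}, so $\delta(\hull{X})$ occurs in that set, and the proof of lemma~\ref{finitely generated} shows that any $A$ with $X\subs A\finsub M$ minimising $\delta(A)$ satisfies $A\strong M$, whence $\hull{X}\subs A$ and $\delta(\hull{X})\le\delta(A)$ by self-sufficiency applied the other way. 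So the minimum is attained at $\hull{X}$.

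First I would prove submodularity of $d$ (item 3), since items 1, 2 and 4 will lean on it. Given $X,Y$, pick $Y_1, Y_2\finsub M$ witnessing $d(X) = \delta(XY_1)$ and $d(Y) = \delta(YY_2)$; set $A_1 = \gen{XY_1}$ and $A_2 = \gen{YY_2}$ (algebraically closed, hence in $\K$). Then $A_1\cup A_2 \sups \gen{XY}$ and $A_1\cap A_2 \sups \gen{X\cap Y}$ (using that $X\cap Y \subs A_1\cap A_2$ because $X\subs A_1$ and $X\subs A_2$ — here one should be slightly careful, but $X\subs\gen{XY_1}$ is clear, and $X\cap Y\subs X\subs A_1$, $X\cap Y\subs Y\subs A_2$), so by the min-characterisation $d(XY)\le\delta(A_1\cup A_2)$ and $d(X\cap Y)\le\delta(A_1\cap A_2)$, and lemma~\ref{submodularity} gives $\delta(A_1\cup A_2)+\delta(A_1\cap A_2)\le\delta(A_1)+\delta(A_2) = d(X)+d(Y)$. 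For item 4, $d(X/Y) = \min\class{d(XY')-d(Y')}{Y'\finsub Y}$; the term $Y' = Y$ gives $d(X/Y)\le d(XY)-d(Y)$, and for the reverse inequality submodularity gives $d(XY')\ge d(XYY')+d(Y') - d(Y) = d(XY)+d(Y')-d(Y)$ wait — more simply, $d(XY)\le d(XY') + d(Y) - d(Y')$ is exactly submodularity applied to $XY'$ and $Y$ (since $(XY')\cup Y = XY$ up to algebraic closure and $(XY')\cap Y \sups Y'$), giving $d(XY')-d(Y')\ge d(XY)-d(Y)$ for all $Y'\finsub Y$, hence $d(X/Y)\ge d(XY)-d(Y)$.

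Items 1 and 2 are then easy: for 1, if $X\subs Y$ then for any $Y'\finsub A$, $XYY' = YY'$ up to algebraic closure so $d(XY')-d(Y')\le d(XYY')-d(Y') \le$ no — cleaner: $d(X/A) = d(X\cup A\text{-bits})\dots$; concretely $d(XY')\le d(YY')$ fails in the wrong direction, so instead argue $d(X/A) = \min_{Y'}(d(XY')-d(Y'))$ and using item 4 with $XY'\subs YY'$: by item 4 (applied twice) $d(XY')-d(Y') = d(X/Y')\le d(Y/Y') = d(YY')-d(Y')$, where $d(X/Y')\le d(Y/Y')$ follows from the min-definition of $d(-/Y')$ since $X\subs Y$ means every $Z\finsub Y'$ also sits inside the relevant sets — in fact monotonicity $d(X/W)\le d(Y/W)$ for $X\subs Y$ is itself part of what we want, so I would instead establish item 1 directly from submodularity: $d(XY')+d(\emptyset\cdot\text{stuff})\le\ldots$. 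Let me just say: item 1 follows because $d(YA') - d(XA')\ge 0$ reduces, via item 4, to $d(Y/XA')\ge 0$, which is item 5 (non-negativity). For item 2, $A\subs B$ means the minimum defining $d(X/B)$ ranges over a larger collection of $Y'$, hence is $\le d(X/A)$. Item 5: $d(X) = \delta(\hull{X})\ge 0$ by SP, with equality iff $\hull{X} = C$ iff $X\subs C$ (as $C\strong M$, so $\hull{X} = C$ exactly when $X\subs C$). Item 6: by item 3 (or directly) $d(x/A)\le d(x) = \delta(\hull{x})$, and $\td(\hull{x}/C) \le 1 + \td(\hull{x}/C(x))$; more to the point, for any single $x$ and $Y\finsub A$, $\delta(xY) = \td(xY) - \grk(xY)$ and $\td(x(Y)) - \td(Y) \in\{0,1\}$ while $\grk(xY)-\grk(Y)\ge 0$, so $d(xY)-d(Y)\le 1$; combined with item 5 ($d(x/A)\ge 0$) this forces $d(x/A)\in\{0,1\}$.

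The main obstacle is bookkeeping the algebraic-closure issue throughout: statements like ``$\gen{XY_1}\cup\gen{YY_2}\sups\gen{XY}$'' and ``$\gen{XY_1}\cap\gen{YY_2}\sups\gen{X\cap Y}$'' are true but rely on $\gen{-}$ being algebraic closure relative to $C$ (remarked on page~\pageref{small in K}), and the min-characterisations of $d$ and $d(-/A)$ must be invoked with the right witnessing finite sets each time. None of the individual steps is deep — everything is a consequence of submodularity of $\delta$ plus $\delta\ge 0$ — but assembling items 1–4 in a non-circular order (I would prove 3, then 4, then 5, then 1, then 2, then 6) is the part that needs care.
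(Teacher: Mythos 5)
Your overall strategy is the same as the paper's --- reduce everything to submodularity of $\delta$, non-negativity (SP), and the min-characterisation of $d$ --- but two of the places you flag as ``needing care'' are exactly where there are genuine gaps.

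First, the proposed order $3\to 4\to 5\to 1\to 2\to 6$ has a latent circularity. Your proof of item~4 applies submodularity to $XY'$ and $Y$ and then discards the term $d((XY')\cap Y)$ in favour of $d(Y')$, which requires $d((XY')\cap Y)\ge d(Y')$, i.e.\ \emph{absolute} monotonicity $X\subs Y\Rightarrow d(X)\le d(Y)$. You later derive item~1 from item~4 and item~5, so as written the argument is circular. The repair is cheap and you should state it: absolute monotonicity is immediate from $d(X)=\min\{\delta(XZ):Z\finsub M\}$, since every term $\delta(YZ)=\delta(X(YZ))$ in the min for $d(Y)$ also appears in the min for $d(X)$. (The paper's ``items 1 and 2 are immediate from the definition'' is exactly this observation in relative form; once you have it, you can in fact prove 1, 2 directly without routing through 4 and 5.)

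Second, your item~6 has a real gap. You compute that $\delta(\gen{xY})-\delta(\gen{Y})\le 1$ (correct: $\td$ goes up by at most one, $\grk$ doesn't decrease), but then conclude $d(xY)-d(Y)\le 1$. This does not follow: $d(xY)=\delta(\hull{xY})$ and $d(Y)=\delta(\hull{Y})$, and the drop from $\delta(\gen{\cdot})$ to $\delta(\hull{\cdot})$ need not be uniform. The paper avoids this by passing to the hull \emph{first}: take $A_0$ realising the min, then $d(x/A_0)=\delta(\hull{\hull{A_0}x})-\delta(\hull{A_0})\le\delta(\hull{A_0}x)-\delta(\hull{A_0})\le\td(x/\hull{A_0})\le 1$, where the first inequality uses that $\hull{\cdot}$ minimises $\delta$. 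Alternatively and more simply, once you have item~2: $d(x/A)\le d(x/\emptyset)=d(x)\le\delta(\gen{x})\le\td(x/C)\le 1$.

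The rest (your item~3 via two separate minimisers $Y_1,Y_2$ rather than the paper's $\hull{X},\hull{Y}$, your item~5) is correct and essentially the paper's argument with slightly more bookkeeping.
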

\begin{proof}
The first two parts are immediate from the definition.
For submodularity:
\begin{eqnarray*}
  d(XY) + d(X\cap Y) &=& \delta(\hull{XY}) + \delta(\hull{X\cap Y})\\
  &\le& \delta(\hull{X}\hull{Y}) + \delta(\hull{X} \cap \hull{Y})\\
  &\le& \delta(\hull{X}) + \delta(\hull{Y})\\
&& = d(X) + d(Y)
\end{eqnarray*}
For part 4, let $Z\subs Y$. Then 
\[d(XY) - d(Y) \le d(XZ) - d(XZ \cap Y) \le d(XZ) - d(Z)\]
by submodularity and monotonicity of $d$. Thus the minimum
value of $d(XZ) - d(Z)$ occurs when $Z=Y$.

Part 5 follows from the Schanuel property.

For part 6, take $A_0 \finsub A$ such that $d(x/A) = d(x/A_0)$. Then
\begin{eqnarray*}
d(x/A_0) & = & d(A_0x) - d(A_0)\\
& = & \delta(\hull{A_0x}) - \delta(\hull{A_0})\\ 
& = & \delta(\hull{\hull{A_0}x}) - \delta(\hull{A_0})\\
& \le & \delta(\hull{A_0}x) - \delta(\hull{A_0})\\
& \le & \td(x/\hull{A_0}) \le 1
\end{eqnarray*} 
so $d(x/A_0) = 0$ or $1$.
\end{proof}

\begin{prop}\label{cl is pregeometry}
  The operator $\ra{\powerset M}{\cl}{\powerset M}$ given by $x \in
  \cl{A} \iff d(x/A) = 0$ is a pregeometry on $M$. If $X \subs M$ is
  such that $d(X)$ is defined (that is, $\td(X/C)$ is finite) then
  $d(X)$ is equal to the dimension of $X$ in the sense of the
  pregeometry.
\end{prop}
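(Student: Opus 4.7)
\medskip
\noindent\textbf{Plan.} I would verify the five pregeometry axioms (reflexivity, monotonicity, idempotence, finite character, and exchange) for $\cl$ directly from the properties of $d$ collected in Lemma~\ref{Properties of d}, and then deduce the dimension equality by telescoping part~4 of that lemma.

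Reflexivity, monotonicity of $\cl$ in its argument, and finite character are essentially rewordings of parts~1--2 and of the definition of $d(x/A)$ as a minimum over $Y\finsub A$: if $x\in A$ then $Y=\{x\}$ gives $d(x/A)=0$; if $A\subs B$ then $d(x/A)\ge d(x/B)\ge 0$; and any $Y\finsub A$ attaining $d(x/A)=0$ lies in a finite subset of $A$. For idempotence, given $x\in\cl(\cl A)$ I would pick $Y=\{y_1,\ldots,y_n\}\finsub\cl A$ with $d(x/Y)=0$, and for each $y_j$ a finite $A_j\finsub A$ with $d(y_j/A_j)=0$; setting $A_0=\bigcup_j A_j$, monotonicity (part~2) gives $d(y_j/A_0)=0$, and submodularity (part~3) yields $d(Y/A_0)=0$ by induction on $n$. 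Then parts~1 and~4 combine to give
\[
d(xA_0)\le d(xYA_0) = d(A_0)+d(Y/A_0)+d(x/YA_0) \le d(A_0)+0+d(x/Y)=d(A_0),
\]
so $d(x/A_0)=0$, and hence $d(x/A)=0$ by part~2, i.e.\ $x\in\cl A$.

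For exchange, assume $x\in\cl(Ay)\minus\cl A$. By finite character one can pick a single $A_0\finsub A$ that simultaneously realises $d(x/A_0)=d(x/A)=1$ and $d(x/A_0\,y)=d(x/Ay)=0$ (take the union of two witnessing finite sets from $A$ and use part~2 to squeeze the values). Two applications of part~4 give
\[
d(y/A_0) = d(xy/A_0) = d(y/A_0 x)+d(x/A_0)=d(y/A_0 x)+1,
\]
and since $d(y/A_0 x)\ge 0$ while $d(y/A_0)\in\{0,1\}$ by part~6, we must have $d(y/A_0)=1$ and $d(y/A_0 x)=0$, i.e.\ $y\in\cl(Ax)$.

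For the dimension identity, enumerate a finite $X=\{x_1,\ldots,x_n\}$ and telescope with part~4 to get $d(X)=\sum_{i=1}^n d(x_i/x_1\cdots x_{i-1})$. Each summand is $0$ or $1$ by part~6, and equals $1$ exactly when $x_i\notin\cl(x_1\cdots x_{i-1})$; hence $d(X)$ counts the elements of any maximal $\cl$-independent subset of $X$, which is $\dim_{\cl}X$. I expect the main obstacle to be the idempotence step, where the finitely many separate witnesses $A_j$ must be amalgamated into one $A_0$ and the non-monotone behaviour of $\delta$ must be controlled via submodularity; the remaining axioms are nearly immediate from Lemma~\ref{Properties of d}.
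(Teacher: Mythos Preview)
Your proposal is correct and follows the same approach as the paper; the paper simply declares the closure-operator axioms ``straightforward to check'' and gives essentially the same exchange computation via part~4 of Lemma~\ref{Properties of d} that you spell out. One small omission: the statement allows $X$ to be infinite (with $\td(X/C)$ finite), whereas your telescoping argument treats only finite $X$ --- but this reduces immediately to the finite case since any such $X$ has a finite subset $X_0$ with $\gen{X_0}=\gen{X}$, giving $d(X)=d(X_0)$ and $X\subs\cl(X_0)$.
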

\begin{proof}
  It is straightforward to check that $\cl$ is a closure operator with
  finite character. It remains to check the exchange property. Let $A
  \subs M, a,b \in M$, and $a \in \cl(Ab)\minus \cl(A)$.  By finite
  character, there is a finite $A_0 \subs A$ such that $a \in
  \cl(A_0b)$. Then $d(a/A_0) = 1$.
Using part 4 of lemma~\ref{Properties of d}, we have 
\begin{eqnarray*}
d(b/A_0a) &=& d(A_0ab) - d(A_0a) \\
 & = & d(A_0b) - d(A_0a)\\
 & = & [d(A_0) + d(b/A_0)] - [d(A_0) + d(a/A_0)]\\
 & = & [d(A_0) + 1] - [d(A_0) + 1] = 0
\end{eqnarray*}
and so $b \in \cl(Aa)$.

Finally, $x$ is independent from $A$ iff $d(x/A) = 1$, and so $d$
agrees with the dimension coming from the pregeometry.
\end{proof}

From now on, by the \emph{dimension} of a structure $A \in \K$ we mean
the dimension in the sense of this pregeometry on $A$. Note that
self-sufficient embeddings are precisely those embeddings which
preserve the dimension.

\subsection{Freeness and Rotundity}\label{rotundity section}

To explain what the theory of the structure $U$ is, we must translate
$\Kso$-saturation into a more tractable form. We will show that it is
equivalent to saying that certain algebraic subvarieties of $TS$ have
a nonempty intersection with $\Gamma_S$.

\begin{defn}\label{rotund defn}
  An irreducible subvariety $V$ of $TS$ is \emph{free} iff $V$ is not
  contained in a coset of $TH$ for any proper algebraic subgroup $H$
  of $S$. It is \emph{absolutely free} iff $\pr_S V$ is not contained
  in a coset of any such $H$ and $\pr_{LS} V$ is not contained in a coset
  of $LH$ for any such $H$.

  A point $g \in TS$ is (\emph{absolutely}) \emph{free} over a field
  $A$ iff $\loc_A(g)$ is (absolutely) free.

  An irreducible subvariety $V$ of $TS$ is \emph{rotund} iff
  for every quotient map $S \rOnto^f H$,
  \[\dim (Tf)(V) \ge \dim H\]
  and \emph{strongly rotund} iff for every such $f$ with $H \neq 1$,
  \[\dim (Tf)(W) \ge \dim H + 1.\]

  A point $g \in TS$ is (\emph{strongly}) \emph{rotund} over a field
  $A$ iff $\loc_A(g)$ is (strongly) rotund. A reducible variety is
  (\emph{strongly}) \emph{rotund} iff at least one of its irreducible
  components is.
\end{defn}

\begin{lemma}
  Let $A \strong B$ be a self-sufficient extension in $\Ks$, with $B$ finitely generated over $A$. Let $S = \Smax(B/A)$ and let $g \in \Gamma_S$ be a basis for $B$ over $A$. 

Then $g$ is free over $A$, absolutely free over $C$, rotund over $A$, and strongly rotund over $C$.
\end{lemma}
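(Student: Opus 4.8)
The plan is to verify the four assertions in the order freeness, rotundity, absolute freeness, strong rotundity, since the later ones build on the earlier ones and on the relative predimension calculus already developed.

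\emph{Freeness over $A$.} This is essentially the definition of $\Smax$. By definition~\ref{Smax defn}, $g$ witnesses the maximality, meaning $g$ does not lie in an $A$-coset of $TH$ for any proper algebraic subgroup $H$ of $S$. So $\loc_A(g)$ cannot be contained in such a coset, i.e. $g$ is free over $A$.

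\emph{Rotundity over $A$.} Let $S \rOnto^f H$ be a quotient map. Then $(Tf)(g) \in \Gamma_H(B)$ by axiom U5. I claim $\grk((Tf)(g)/A) = \dim H$: if not, then $(Tf)(g)$ would lie in an $A$-coset of $TH'$ for some proper algebraic subgroup $H'$ of $H$, whence $g$ would lie in an $A$-coset of $T(f^{-1}H')$, contradicting freeness of $g$ over $A$ since $f^{-1}H'$ is a proper subgroup of $S$. Now compute in $B$: $\dim (Tf)(V) = \td((Tf)(g)/A) \ge \grk((Tf)(g)/A) = \dim H$, where the inequality is because $A \strong B$ (so $\delta((Tf)(g)/A) \ge 0$, i.e. $\td \ge \grk$; here one uses that $\loc_A((Tf)(g))$ is the image of $V$ under $Tf$). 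This gives rotundity of $V$ over $A$.

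\emph{Strong rotundity over $C$, and absolute freeness over $C$.} Here the extra $+1$ and the passage to $C$ both come from the Schanuel property SP, via the dichotomy built into $\delta$. For strong rotundity, let $S \rOnto^f H$ with $H \neq 1$; then $(Tf)(g) \in \Gamma_H(B)$ and, by the argument above with $A$ replaced by $C$ and freeness over $A$ replaced by absolute freeness over $C$ (which we are about to establish), $(Tf)(g)$ does not lie in a $C$-coset of $TH'$ for any \emph{proper} $H' \lhd H$; in particular $g$ does not lie in a $C$-coset of $TH$ unless $H = 1$, so SP applies to $(Tf)(g)$ and yields $\td((Tf)(g)/C) \ge \dim H + 1$, i.e. $\dim(Tf)(V) \ge \dim H + 1$. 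For absolute freeness over $C$ one must rule out $\pr_S V \subs$ (coset of $H$) and $\pr_{LS} V \subs$ (coset of $LH$) for proper $H$. The key point is that $g$ is a basis for $\Gamma(B/A)$ \emph{and} $A$ is self-sufficient in $B$, so $C \strong B$ as well (composition of self-sufficient embeddings), and one can reduce to showing $\Smax(B/C) = S$ — i.e. the basis over $A$ is still a basis over $C$ — using proposition~\ref{Smax extensions} and the fact that $A$ itself, being self-sufficient, contributes only ``$C$-rational'' constraints; then freeness over $C$ follows exactly as freeness over $A$ did, and the projection statements follow since a coset of $TH$ projects into a coset of $H$ (resp. $LH$) so membership of $\pr V$ in such a coset would force, after pulling back, $V$ into a coset of $T(\text{something proper})$.

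\emph{Main obstacle.} The delicate point is the transition from ``over $A$'' to ``over $C$'': freeness and rotundity over $A$ are almost definitional, but absolute freeness and strong rotundity over $C$ require knowing that $g$ remains a basis for $\Gamma(B/C)$, equivalently that $\Smax(B/C)$ is (isogenous to) $S = \Smax(B/A)$ rather than something larger coming from $\Gamma_{S'}(A)$ for other $S' \in \Ss$. I expect this to hinge on proposition~\ref{Smax extensions} (relating $\Smax(B)$, $\Smax(A)$, $\Smax(B/A)$) combined with $A \strong B$, and on carefully tracking how a $C$-coset containing $\pr_S V$ or $\pr_{LS} V$ pulls back to a coset of a proper algebraic subgroup of $S$ in $TS$ — the asymmetry between the $S$-part and the $LS$-part of $TS$ is where I would be most careful, since a subgroup of $TS$ need not be of the form $TH$, but for the freeness statements only cosets of the canonical subgroups $TH$, $H$, $LH$ are at issue.
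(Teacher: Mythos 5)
Your arguments for freeness over $A$ and rotundity over $A$ are correct and essentially match the paper's proof. The issues are in the other two assertions, and the absolute-freeness argument has a genuine gap.

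\textbf{Absolute freeness over $C$.} You reduce to showing $\Smax(B/C) = S$, but this is false in general: by proposition~\ref{Smax extensions}, $\Smax(B/C)$ is an extension of $\Smax(A/C)$ with kernel $\Smax(B/A) = S$, so $\Smax(B/C) \neq S$ whenever $A$ has nontrivial group rank over $C$. More seriously, your final ``pulling back'' step does not go through: if $\pr_S \loc_C(g)$ lies in a coset of a proper subgroup $H \lhd S$, pulling back only gives that $\loc_C(g)$ lies in a coset of $LS \cross H$, which is a proper algebraic subgroup of $TS$ but \emph{not} of the form $TH = LH \cross H$. The basis property only rules out cosets of $TH$, so there is no contradiction. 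The missing idea is the paper's use of axioms U4 and U5: because $g \in \Gamma_S$, the condition ``$\pr_{LS} \loc_C(g)$ lies in a coset of $LH$'' and the condition ``$\pr_S \loc_C(g)$ lies in a coset of $H$'' are \emph{equivalent}, so if either held, both would, and then $\loc_C(g)$ would lie in a $C$-coset of $TH$ --- which the basis property does forbid (since $C \subs A$). Without that coupling of the two projections, one cannot pass from a constraint on one projection to a constraint on $\loc_C(g)$ itself.

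\textbf{Strong rotundity over $C$.} Your argument is circular as written, since you invoke ``absolute freeness over $C$ (which we are about to establish)'' to deduce that $(Tf)(g)$ does not lie in a $C$-coset of $TH'$ for proper $H' \lhd H$. This circularity is avoidable: freeness over $A$ already gives the needed fact, because $C \subs A$ so ``not in an $A$-coset'' implies ``not in a $C$-coset.'' Once you have $\grk((Tf)(g)/C) = \dim H$, the Schanuel property gives $\delta((Tf)(g)/C) \ge 1$ (as $(Tf)(g)$ is a non-constant point of $\Gamma_H$), and then $\dim (Tf)(\loc_C g) = \td((Tf)(g)/C) \ge \dim H + 1$. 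So this part is fixable, but as stated it both contains the circularity and inherits the incorrect detour through $\Smax(B/C)$.
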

\begin{proof}
  If $S = 1$ then the result is trivial. Assume $S \neq 1$. 
  By the definition of a basis, $g$ does not lie in an $A$-coset of
  $TH$ for any proper algebraic subgroup $H$ of $S$, and hence
  $\loc_A(g)$ is not contained in such a coset. By axioms U4 and U5,
  $\pr_{LS}\loc_C(g)$ lies in a coset of $LH$ iff $\pr_S\loc_C(g)$
  lies in a coset of $H$, since $g \in \Gamma_S$. If both held then
  $g$ would lie in a $C$-coset of $TH$, but it does not, so $g$ is
  absolutely free over $C$.

  For each quotient map $S \rOnto^f H$,
  \[\dim((Tf)(\loc_A g)) - \dim H = \dim(\loc_A((Tf)(g)) - \dim H =
  \delta((Tf)(g)/A) \ge 0\] 
  as $g$ is free over $A$ and $A \strong B$, so $g$ is rotund over $A$.

  Similarly, if $H \neq 1$ then
  \[\dim((Tf)(\loc_C g)) - \dim H = \dim(\loc_C((Tf)(g)) - \dim H =
  \delta((Tf)(g)/C) \ge 1\] 
  as $B$ satisfies the Schanuel property, so $g$ is strongly rotund over $C$.
\end{proof}

It is useful to isolate the subvarieties which occur as the locus of a
basis of $\Gamma(B/A)$ for an extension $B/A$ which cannot be split
into a tower of smaller extensions. We call these \emph{perfectly
  rotund} subvarieties.
\begin{defn}
  A subvariety $V$ of $TS$ is \emph{perfectly rotund} iff it is rotund,
  $\dim V = \dim S$, and for every proper, nontrivial quotient map $S
  \rOnto^f H$,
  \[\dim (Tf)(V) > \dim H.\]
\end{defn}

\subsection{Existential closedness}\label{EC section}

\begin{defn}\label{parametric family}
  Let $X$ be a variety. Any constructible set $P$ and Zariski-closed $V \subs X \cross P$ defines a \emph{parametric family} $(V_p)_{p \in P}$ of subvarieties of $X$, where $V_p$ is the fibre above $p$ of the natural projection $X \cross P \to P$, restricted to $V$. We write $(V_p)_{p \in P(C)}$ to be the fibres over the $C$-points of $P$, and also call this a parametric family.
\end{defn}

\begin{defn}
  We consider three forms of Existential Closedness, and two notions
  relating to dimension: Non-Triviality and Infinite Dimensionality,
  for a model $M$ of $T_\Ss^U$.
  \begin{description}\label{EC defn}
  \item[EC] For each $S \in \Ss$, each irreducible rotund subvariety $V$ of
    $TS$, and each parametric family $(W_e)_{e \in Q(C)}$ of proper subvarieties of $V$, with $Q$ a constructible set defined over $C_0$, there is $g \in \Gamma_S \cap V \minus \bigcup_{e \in Q(C)} W_e$.

  \item[EC$'$] The same as EC except only for perfectly rotund $V$.
  \item[SEC (Strong existential closedness)] For each $S \in \Ss$,
    each rotund subvariety $V$ of $TS$, and each finitely generated
    field of definition $A$ of $V$, the intersection $\Gamma_S \cap V$
    contains a point which is generic in $V$ over $A \cup C$.
  \item[NT] There is $x \in M$ such that $x \notin C$.
  \item[ID] The structure $M$ is infinite dimensional.
  \end{description}
\end{defn}
NT is equivalent to saying that the dimension of $M$ is nonzero, so it
is implied by ID. Clearly SEC implies EC and EC implies EC$'$. We prove
that EC$'$ implies EC using of the tool of intersecting a
variety with generic hyperplanes. For $p = (p_1,\ldots,p_N) \in \mathbb{A}^N \minus \{0\}$, let the hyperplane $\Pi_p$ in the
affine space $\mathbb{A}^N$ be given by
\[x \in \Pi_p \quad \mbox{ iff } \quad \sum_{i=1}^N p_ix_i = 1.\]
Consider the family of hyperplanes $(\Pi_p)_{p \in \mathbb{A}^N\minus
\{0\}}$, which is the family of all affine hyperplanes which do not
pass through the origin. From the equation defining the hyperplanes it
follows that there is a duality: $a \in \Pi_p$ iff $p \in \Pi_a$.

The lemma we use is in the style of model-theoretic geometry, and is
adapted from part of a proof in \cite{Zilber04}. Here and later, $x \in \acl{X}$ means that $x$ is a point \emph{in some variety} which is algebraic over $X$. We do not restrict this notation to $x$ being an element or tuple from affine space.

\begin{lemma}\label{generic hyperplanes}
  Let $A$ be a field, let $g \in \mathbb{A}^N$ and let $p$ be generic
  in $\Pi_g$ over $A$. Suppose that $h$ is any tuple (a point in any
  algebraic variety) such that $h \in \acl(Ag)$. Then either $g \in
  \acl(Ah)$ or $\td(h/Ap) = \td(h/A)$ (that is, $h$ is independent
  of $p$ over $A$).
\end{lemma}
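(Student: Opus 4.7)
The plan is to prove the contrapositive: assuming $\td(h/Ap) < \td(h/A)$, I will deduce that $g \in \acl(Ah)$. Without loss of generality I take $A$ to be algebraically closed, since neither the hypotheses nor the conclusion change if we replace $A$ by $\acl(A)$.

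First I would do a transcendence-degree count. By the additivity $\td(hp/A) = \td(h/A) + \td(p/Ah) = \td(p/A) + \td(h/Ap)$, the hypothesis is equivalent to $\td(p/Ah) < \td(p/A)$. Since $h \in \acl(Ag)$, we have $\acl(Ah) \subs \acl(Ag)$, and so $\td(p/Ah) \ge \td(p/Ag) = N-1$, the latter by genericity of $p$ in the hyperplane $\Pi_g$. Combined with $\td(p/A) \le N$, this pins down $\td(p/A) = N$ and $\td(p/Ah) = N-1$.

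Next, let $W \subs \mathbb{A}^N$ be the algebraic locus of $p$ over $A(h)$, an $A(h)$-irreducible variety of dimension $N-1$. Over the algebraic closure $\overline{A(h)}$ it decomposes into finitely many Galois-conjugate geometric components $W_1,\dots,W_r$, each still of dimension $N-1$. I would next show $\Pi_g \subs W$: the intersection $W \cap \Pi_g$ is defined over $\overline{A(g)}$ and contains $p$, which is the generic point of $\Pi_g$ over $A(g)$; were $\Pi_g \not\subs W$, this intersection would be a proper closed subvariety of $\Pi_g$ and could not contain such a generic $p$. Since $\Pi_g$ is absolutely irreducible (defined by a single degree-one polynomial) and contained in the union $\bigcup_i W_i$, it lies inside some $W_i$. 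Both $\Pi_g$ and $W_i$ are irreducible of dimension $N-1$, hence they are equal.

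Thus the hyperplane $\Pi_g$ equals $W_i$, which is defined over $\overline{A(h)} = \acl(Ah)$. Since the defining equation $\sum_i g_i X_i - 1 = 0$ of $\Pi_g$ is unique up to scalar and is already normalized by its constant term $-1$, each coefficient $g_i$ must lie in $\acl(Ah)$, giving $g \in \acl(Ah)$. The main delicate point is the identification of $\Pi_g$ with a geometric component of $W$: the presumed dependence of $h$ on $p$ forces the codimension-one locus of $p$ over $A(h)$ to contain $\Pi_g$, and the coincidence of dimensions then upgrades that containment to equality, which is precisely what lets us read $g$ off from an equation defined over $\acl(Ah)$.
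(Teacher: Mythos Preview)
Your proof is correct and follows essentially the same line as the paper's: the transcendence-degree count forces the locus of $p$ over $\acl(Ah)$ to have dimension $N-1$ and to contain $\Pi_g$, hence to equal it, from which $g \in \acl(Ah)$ follows. The only cosmetic differences are that the paper works directly over $\acl(Ah)$ (so the locus is already absolutely irreducible and no decomposition into geometric components is needed) and recovers $g$ via the duality $\{g\} = \{x : (\forall y \in \Pi_g)\, x \in \Pi_y\}$ rather than by normalizing the defining equation.
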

\begin{proof}
  If $g$ is algebraic over $A$ then the result is trivial, so we
  assume not.

  Let $U = \loc(p/\acl(Ah))$. Suppose $\td(h/Ap) < \td(h/A)$. Then, by
  counting transcendence bases, $\dim U = \td(p/Ah) < \td(p/A) = N$,
  the last equation holding because $g \notin \acl(A)$ and so $p$ is
  generic in $\mathbb{A}^N$ over $A$. But $\td(p/Ah) \ge \td(p/Ag) =
  N-1$ as $p$ is generic in $\Pi_g$, an $(N-1)$-dimensional variety
  defined over $Ag$. Hence $\dim U = N-1$. Now $\acl(Ah) \subs
  \acl(Ag)$, so $U = \loc(p/\acl(Ah) \sups \loc(p/\acl(Ag)) = \Pi_g$.
  But $\dim U = \dim \Pi_g$ and both $U$ and $\Pi_g$ are irreducible
  and Zariski-closed in $\mathbb{A}^N$, so $U = \Pi_g$.

  Hence $\Pi_g$ is defined over $\acl(Ah)$, and so is the set
  \[\class{x \in \mathbb{A}^N}{(\forall y \in \Pi_g)[x \in \Pi_y]} =
  \{g\}.\] Thus $g \in \acl(Ah)$.
\end{proof}

To have generic hyperplanes definable in the structure, we need to
know that it has large enough transcendence degree.
\begin{lemma}
  Suppose $\Ss \neq \{1\}$ and $M \models T_\Ss^U + \mathrm{NT} +
  \mathrm{EC}'$. Then $\td(M/C)$ is infinite.
\end{lemma}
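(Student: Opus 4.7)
The strategy is to argue by contradiction. Suppose $\td(M/C) = n < \infty$; we aim to produce $g \in M$ with $\td(g/C) > n$. By NT we have $n \ge 1$. Choose a nontrivial $S_0 \in \Ss$; since $\Ss$ is closed under products, $S := S_0^{n+1} \in \Ss$ has $\dim S = (n+1)\dim S_0 > n$.

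Next, construct a perfectly rotund subvariety $V \subs TS$ of dimension $\dim S$ defined over $C$, for instance by intersecting $TS$ with $\dim S$ affine hyperplanes whose coefficients are chosen generically from $C$ (available because $\td(C/C_0) = \aleph_0$). A dimension count gives $\dim V = \dim S$, and for sufficiently generic parameters the strict inequality $\dim(Tf)(V) > \dim H$ holds for every proper nontrivial quotient $f \colon S \twoheadrightarrow H$, so $V$ is perfectly rotund (hence in particular free, since rotundity forbids containment in any coset of $TH$). Apply EC$'$ to $V$ with a parametric family $(W_e)_{e \in Q(C)}$ of proper subvarieties of $V$ designed so that every $g \in \Gamma_S \cap V$ with $\td(g/C) \le n$ lies in some $W_e$. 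By the Schanuel property SP, any such $g$ lies in a coset $\gamma \cdot TH$ for some proper algebraic subgroup $H$ of $S$ and some $\gamma \in TS(C)$; since $V$ is free, each intersection $V \cap (\gamma \cdot TH)$ is a proper subvariety of $V$. Taking these intersections as the parametric family, EC$'$ yields $g \in \Gamma_S \cap V$ with $\td(g/C) > n$, contradicting $\td(M/C) = n$.

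The main obstacle is organizing all pairs $(H, \gamma)$ into a single parametric family $(W_e)_{e \in Q(C)}$ with $Q$ definable over $C_0$. The $\gamma$-component is provided directly by $TS$, which is defined over $C_0$. The subgroup parameter $H$ ranges over the countably many proper algebraic subgroups of $S$; these organize into a countable disjoint union of $C_0$-constructible components of the subgroup scheme of $S$. One handles this either by restricting at each stage to subgroups of bounded Hilbert polynomial (yielding a single $C_0$-constructible parameter space) and iterating over the components, or by noting that it suffices to produce, for each finite $n$, some application of EC$'$ whose output exceeds transcendence degree $n$ over $C$. The role of NT is to rule out the degenerate case $M = C$, in which $\Gamma_S = TS(C)$ consists only of $C$-points and no contradiction with finite $\td(M/C)$ could arise from EC$'$ alone.
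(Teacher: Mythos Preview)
Your approach has a genuine gap: the cosets $\gamma \cdot TH$ cannot be organized into a single parametric family $(W_e)_{e \in Q(C)}$ with $Q$ constructible. The proper algebraic subgroups of a semiabelian variety $S$ form a countably infinite set that is \emph{not} parametrized by any constructible set --- indeed, this non-definability is precisely what is exploited later in the paper to prove the uniform Schanuel property via compactness. Your suggested workarounds do not help: bounding the Hilbert polynomial captures only finitely many $H$ at a time, whereas a single application of EC$'$ must avoid all of them simultaneously, and your second suggestion merely restates the goal rather than achieving it. A secondary issue is your appeal to $\td(C/C_0) = \aleph_0$ to construct a perfectly rotund $V$ by generic hyperplanes: the lemma is stated for arbitrary models of $T_\Ss^U + \mathrm{NT} + \mathrm{EC}'$, so $C$ may be as small as $\overline{\Q}$, where avoiding countably many bad loci by a $C$-point need not be possible.

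The paper's proof takes a different and much simpler route. Rather than arguing by contradiction with one large $S$, it fixes a single nontrivial $S \in \Ss$ and builds a tower $C \subsetneq K_1 \subsetneq K_2 \subsetneq \cdots$ inside $M$ directly. At each step it applies EC$'$ to the fibre variety $V_i = \{(x,y) \in LS \times S : x = x_i\}$ with $x_i \in LS(K_i) \setminus LS(K_{i-1})$; this $V_i$ is perfectly rotund and requires \emph{no avoidance family at all}. The Schanuel property then gives $\td(K_{i+1}/C) \ge i\dim S + 1$, so $\td(M/C)$ is unbounded. Your idea can in fact be salvaged by parametrizing only the $LS$-projections $\alpha + LH$ of the cosets --- these \emph{do} form a parametric family, since vector subspaces of $LS$ are parametrized by a Grassmannian --- but this is an additional argument you did not supply.
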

\begin{proof}
  We build a tower of algebraically closed field extensions $C \subsetneq K_1 \subsetneq K_2 \subsetneq \cdots $ inside $M$. By NT, we can find a proper extension $K_1$ of $C$.

  Now suppose inductively that we have built the tower up to $K_i$ for some $i \ge 1$. Let $S \in \Ss$ be nontrivial, and let $x_i \in LS(K_i)\minus LS(K_{i-1})$. Let $V_i$ be the subvariety of $TS$ given by $V_i = \class{(x,y)\in LS \cross S}{x=x_i}$. Then $V_i$ is perfectly rotund, so, by EC$'$, there is $y_i \in S(M)$ such that $(x_i,y_i)\in \Gamma_S$. Let $K_{i+1} = K_i(y_i)^\alg$. By SP, $\td(K_{i+1}/C) \ge i\dim S + 1$ for each $i$. Thus $\td(M/C)$ is infinite.
\end{proof}

\begin{prop}\label{EC' implies EC}
  $\mathrm{EC}' \implies \mathrm{EC}$.
\end{prop}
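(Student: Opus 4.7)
The strategy is to reduce from a general irreducible rotund $V\subs TS$ to a perfectly rotund one by intersecting with generic affine hyperplanes in an affine embedding of $TS$, and then apply $\mathrm{EC}'$. We may assume $\Ss\neq\{1\}$ (otherwise $\mathrm{EC}$ is vacuous), so by the preceding lemma $\td(M/C)$ is infinite and generic parameters in $M$ are always available.

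\textbf{Dimension reduction.} Let $V$ be irreducible rotund of dimension $d\ge \dim S$, with parametric family $(W_e)_{e\in Q(C)}$ of proper subvarieties of $V$ to avoid. If $d>\dim S$, embed $TS$ affinely in some $\mathbb{A}^N$ and pick a hyperplane $\Pi_p$ with $p\in M^N$ generic over the field of definition of $V$ and of the total space of the family $(W_e)$, invoking Lemma~\ref{generic hyperplanes} to control algebraic interactions. Take $V'$ to be the irreducible component of $V\cap\Pi_p$ through a generic point of $V$. Then $\dim V' = d-1$, and $V'\not\subs W_e$ for any $e$ by genericity of $p$. For rotundity of $V'$: for each quotient $f:S\onto H$, the slice reduces $\dim(Tf)(V)$ only when the fiber of $(Tf)\restrict{V}$ is zero-dimensional (i.e.\ $\dim(Tf)(V)=d$), and even then $\dim(Tf)(V')=d-1\ge\dim S\ge\dim H$ since $d>\dim S$. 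Iterating produces $V^*$ with $\dim V^*=\dim S$, still irreducible rotund, still avoiding each $W_e$.

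\textbf{Applying $\mathrm{EC}'$.} If $V^*$ is perfectly rotund, applying $\mathrm{EC}'$ to $V^*$ with the (still parametric over $Q(C)$) family $(V^*\cap W_e)_{e\in Q(C)}$ of proper subvarieties yields $g\in\Gamma_S\cap V^*\minus\bigcup_e W_e\subs\Gamma_S\cap V\minus\bigcup_e W_e$, as required. If $V^*$ has $\dim V^*=\dim S$ but fails perfect rotundity, there is a proper nontrivial quotient $f:S\onto H$ with $\dim(Tf)(V^*)=\dim H$, and I would proceed by outer induction on $\dim S$. The pushforward $(Tf)(V^*)\subs TH$ is rotund of dimension $\dim H<\dim S$, so by the inductive hypothesis we can find $h\in\Gamma_H\cap (Tf)(V^*)$ avoiding the pushed-forward family; lifting $h$ to $g\in\Gamma_S\cap V^*$ uses the short exact sequence $0\to K\to S\to H\to 0$ (with $K=\ker f$) and a second inductive application to the generic fiber of $V^*\to (Tf)(V^*)$, which sits as a rotund subvariety of a single $TK$-coset over $h$.

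\textbf{Main obstacle.} The decisive difficulty is precisely this non-perfectly-rotund case with $\dim V=\dim S$: hyperplane slicing cannot continue without violating the rotundity requirement $\dim V\ge\dim S$, so a separate induction on $\dim S$ is needed. One must carefully track how the family $(W_e)$ descends to $TH$ under $Tf$ and how it pulls back through the $TK$-fibers. In particular, the surjectivity of $\Gamma_S\to\Gamma_H$ along $f$ is not immediate from axioms U5--U7 (which give only $(Tf)(\Gamma_S)\subs\Gamma_H$), so the lifting step may require an auxiliary invocation of $\mathrm{EC}'$ on a perfectly rotund subvariety of $TS$ whose image under $Tf$ is a single point, to produce an initial lift of $h$ into $\Gamma_S$.
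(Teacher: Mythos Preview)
Your two-step plan matches the paper's: slice by generic hyperplanes down to $\dim V=\dim S$, then handle the remaining case separately. Step~1 is essentially the paper's argument, using Lemma~\ref{generic hyperplanes} in the same way.

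For Step~2 you correctly isolate the real difficulty --- when $\dim V=\dim S$ but $V$ is not perfectly rotund, slicing must stop --- and propose induction on $\dim S$ via a quotient $f\colon S\onto H$ with $\dim Tf(V^*)=\dim H$, followed by a lift. The lift is indeed the crux, and your suggested fix cannot work as stated: a subvariety of $TS$ whose image under $Tf$ is a single point lies in a coset of $T(\ker f)$, so its image in $TH$ has dimension $0<\dim H$ and it is never rotund in $TS$, let alone perfectly rotund. There is nothing of that shape to hand to $\mathrm{EC}'$.

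The paper avoids the lifting problem by recasting Step~2 in the language of the amalgamation category rather than as induction on $\dim S$. Take $g$ generic in $V$ over a finitely generated field of definition $A$ and form the abstract extension $B=A(g)^{\alg}$ with $g\in\Gamma_S$; rotundity gives $A\strong B$ with $\delta(B/A)=0$. Refine this into a \emph{maximal} chain $A=B_0\strong B_1\strong\cdots\strong B_l=B$ of proper self-sufficient extensions. Maximality forces the basis $b_i$ of $\Gamma(B_i/B_{i-1})$ to have perfectly rotund locus in $TS_i$, where $S_i=\Smax(B_i/B_{i-1})$. Now realize the chain inside $M$ one step at a time: each step is a single application of $\mathrm{EC}'$ to a perfectly rotund variety in $TS_i$ (a smaller group than $S$), with the avoidance family pushed forward along the quotient $S\onto S_1$. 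By Lemma~\ref{finitely generated extensions} the $\L_\Ss$-structure on each $B_i$ over $B_{i-1}$ is determined by $b_i$, so the successive realizations assemble into an $\L_\Ss$-embedding $B\into M$, and the image of $g$ under this embedding automatically lies in $\Gamma_S\cap V\minus\bigcup_e W_e$. No surjectivity of $\Gamma_S\to\Gamma_H$ is ever invoked.
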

\begin{proof}
  The proof is a sequence of reductions. Suppose $M \models T_\Ss^U$ +
  EC$'$. If $M = C$ or $\Ss = \{1\}$ then trivially $M \models$
  EC, so we assume $M \models$ NT and $\Ss \neq \{1\}$.

  Let $S \in \Ss$, and let $V \subs TS$ be rotund. We may assume that
  $V$ is irreducible.

  \Step{1: $\mathbf{\dim V = \dim S}$} 
  We first show that if $\dim V > \dim S$, we can find a subvariety
  $V'$ of $V$ which is still rotund and irreducible, with $\dim
  V' = \dim V - 1$. By induction, we can assume that $\dim V = \dim
  S$.

  Let $A$ be a subfield of $M$ which is a field of definition of $V$,
  with finite transcendence degree over $C$. Let $g \in V(M)$, generic
  over $A$. (Such a $g$ exists because $M$ is algebraically closed and
  has infinite transcendence degree over $C$, but we don't assume $g \in
  \Gamma_S$.)

  Although $TS$ will not in general be an affine variety, we can embed
  it in some affine space $\mathbb{A}^N$ as a constructible set in a
  way which preserves the notion of algebraic dependence. (This
  follows from the model-theoretic definition of a variety.) Now we
  consider $g$ as a point in $\mathbb{A}^N$, and choose $p$ in
  $\Pi_g(M)$ such that $p_1,\ldots,p_{N-1}$ are algebraically
  independent over $A(g)$. 

  Let $A' = A(p)^\alg$ and let $V' = \loc(g/A')$, the locus being
  meant as a subvariety of $V$, not of $\mathbb{A}^N$. Then $\dim V' =
  \dim V - 1$. We show that $V'$ is rotund.


  Let $\ra{S}{q}{H}$ be an algebraic quotient map, and consider the
  image $h = (Tq)(g)$ in $TH$. Then
  $h \in \acl(Ag)$, and $\dim (Tq)(V') = \td(h/A')$. If $g \in
  \acl(Ah)$ then
  \[\td(h/A') = \td(g/A') = \dim V - 1 \ge \dim S \ge \dim H.\]
  Otherwise, by lemma~\ref{generic hyperplanes}, $\td(h/A') =
  \td(h/A)$, so $\dim (Tq)(V') = \dim (Tq)(V)$ which is at least $\dim
  H$ by rotundity of $V$. Thus $V'$ is rotund.

  \Step{2: Perfect Rotundity} Now we have $V$ rotund, irreducible, and
  of dimension equal to $\dim S$. Again, let $A$ be a subfield of $M$
  which is a field of definition of $V$, of finite transcendence
  degree over $C$, and now assume $A$ is algebraically closed.

  Consider the extension $B$ of $A$ where $B = A(g)^\alg$, and $g \in
  \Gamma_S \cap V$, with $g$ generic in $V$ over $A$. The extension $A
  \into B$ is self-sufficient, since $V$ is rotund. Also $\delta(B/A)
  = 0$, as $\dim V = \dim S$. Split the extension up into a maximal
  chain of self-sufficient extensions 
  \[A = B_0 \strong B_1 \strong B_2 \strong \cdots \strong B_l = B\]
  with each $B_i$ algebraically closed and each inclusion proper. We
  show inductively that $B_i$ is realised in $M$ over $B_{i-1}$. 

  Let $b_i$ be a basis for $\Gamma(B_i/B_{i-1})$. We have
  $\delta(B_i/B_{i-1}) = 0$, because $B_{i-1} \strong B_i$ and $B_i
  \strong B$, so $\loc(b_i/B_{i-1})$ is free and rotund, and its dimension is
  equal to $\dim \Smax(B_i/B_{i-1})$. If
  $\ra{\Smax(B_i/B_{i-1})}{q}{H}$ were a proper nontrivial quotient and
  $\dim(Tq)(\loc(b_i/B_{i-1}) = \dim H$ then $B_{i-1}((Tq)(b_i))$ would
  be a self-sufficient extension intermediate between $B_{i-1}$ and
  $B_i$. By assumption, no intermediate extensions exist, and so
  $\loc(b_i/B_{i-1})$ is perfectly rotund.

  Let $(W_e)_{e \in Q(C)}$ be a parametric family of proper subvarieties of $V$, the family defined over $C_0$, and $S \rOnto^f \Smax(B_1/A)$. Then $Tf(W_e)_{e \in Q(C)}$ is a parametric family of proper subvarieties of $Tf(V)$. Since $f$ is defined over $C_0$, so is this family. Hence, by EC$'$, there is $b_1' \in Tf\left(V \minus \bigcup_{e \in Q(C)}W_e\right)$. Replacing $A$ by $B_1$, we inductively construct $b' \in V \minus \bigcup_{e \in Q(C)}W_e$. Thus $M \models$ EC.
\end{proof}

\begin{prop}\label{SEC lemma}
  The Fraiss\'e limit $U$ satisfies SEC and ID.
\end{prop}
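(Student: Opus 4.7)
The plan is to derive both properties from the $\Ks_{<\aleph_0}$-saturation of $U$ established in Theorem~\ref{amalgam is unique}. For SEC, given $S \in \Ss$, a rotund (hence without loss of generality irreducible) subvariety $V \subs TS$, and a finitely generated field of definition $A$, I will build an abstract finitely generated self-sufficient extension $B$ of $A_0 \leteq \hull{A \cup C}$ (computed inside $U$) in which a generic point of $V$ is realised in $\Gamma_S$, and then invoke saturation to embed $B$ into $U$ over $A_0$. ID will then follow directly by applying saturation to pure transcendental extensions of $C$.

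For the SEC construction I first reduce to the case where $V$ is free in $TS$: if $V$ lies in an $A_0$-translate of $TH$ for some proper algebraic subgroup $H \le S$, replace $S$ by the smallest such $H$ and translate $V$ accordingly, checking that rotundity is preserved. Next I take $g$ generic in $V$ over $A_0$, let $B \leteq A_0(g)^\alg$ as a field, declare $g \in \Gamma_S(B)$, and define $\Gamma_{S'}(B)$ for the remaining $S' \in \Ss$ using axioms U5, U6, U7 applied to $g$ together with $\Gamma_{S'}(A_0)$; by Lemma~\ref{finitely generated extensions} this is forced once we require $g$ to be a basis for $\Gamma(B/A_0)$ in the sense of Definition~\ref{Smax defn}. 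Then $\Smax(B/A_0) = S$, $\grk(B/A_0) = \dim S$, and $\delta(B/A_0) = \dim V - \dim S \ge 0$. The core calculation is to verify that $A_0 \strong B$ and that $B$ satisfies the Schanuel property: for any $X \fgsub B$, points of $\Gamma_{S'}(X)$ arise from $A_0$-translates of images of $g$ under quotient maps, and rotundity gives $\dim(Tf)(V) \ge \dim H$ for each quotient $f \colon S \onto H$; combining this with SP on $A_0$ yields $\delta(X) \ge \delta(X \cap A_0) \ge 0$, by essentially the same argument as in the last paragraph of the proof of Proposition~\ref{free amalgamation}.

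Once $A_0 \strong B$ is established, $B \in \Kso$ by Lemma~\ref{finitely generated}, and $\Kso$-saturation of $U$ produces a self-sufficient embedding $B \strong U$ fixing $A_0$. The image of $g$ lies in $\Gamma_S \cap V$ and is generic in $V$ over $A_0$, hence over $A \cup C$, proving SEC. For ID, for each $n$ I consider the abstract structure $B_n = C(t_1, \ldots, t_n)^\alg$ with $\Gamma_{S'}(B_n) \leteq TS'(C)$ for every $S' \in \Ss$; axioms U1--U7 and SP are immediate, $C \strong B_n$, and $\delta(B_n) = n$, so saturation gives $B_n \strong U$ and hence $\dim U \ge n$ for every $n$. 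The principal obstacle is the verification that the constructed $B$ for SEC is a self-sufficient extension of $A_0$ satisfying SP; this is precisely where rotundity enters, and the required bookkeeping on $\Smax$ and $\delta$ is essentially already present in the proof of free amalgamation.
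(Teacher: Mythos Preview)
Your proposal is correct and follows essentially the same plan as the paper: construct a self-sufficient extension of (the hull of) $A$ in which a generic point of $V$ lies in $\Gamma_S$, then invoke $\Kso$-saturation; the ID argument is identical. The only organizational difference is that the paper builds $B$ over $A$ itself, checks $A \strong B$ from rotundity, and then cites Proposition~\ref{free amalgamation} to amalgamate $B$ with $\hull{A}$ over $A$, whereas you work directly over $A_0 = \hull{A}$ and in effect redo that amalgamation argument inline --- this saves you the freeness reduction, since the paper's appeal to free amalgamation already handles the case where $V$ is not free over $A$.
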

\begin{proof}
  Let $V$ be a rotund subvariety of $TS$, defined over a finitely
  generated subfield $A$ of $U$. Let $g$ be a generic point of $V$
  over $A$. Let $B$ be the extension of $A$ defined by taking $g$ as a
  basis for $\Gamma(B/A)$. Since $V$ is rotund, the extension $A \into
  B$ is self-sufficient.

  The hull $\hull{A}$ of $A$ has finite transcendence degree over $A$,
  so by theorem~\ref{free amalgamation} there is a free amalgam $E$ of
  $\hull{A}$ and $B$ over $A$ such that $\hull{A} \strong E$. Hence,
  by the $\Kso$-saturation of $U$, there is an embedding $\theta$ of
  $E$ into $U$ over $\hull{A}$. Then $\theta(g) \in \Gamma_S \cap V$,
  so $U$ satisfies SEC.

  For $n \in \N$, let $A_n$ be an algebraically closed field extension
  of transcendence degree $n$ over $C$, and for each $S \in \Ss$, let
  $\Gamma_S = TS(C)$. So there are no points of $\Gamma$ outside
  $C$. Then each $A_n \in \K$, so it embeds self-sufficiently into
  $U$. The dimension of $A_n$ is $n$, and self-sufficient embeddings
  preserve the dimension, so the dimension of $U$ is at least $n$ for
  every $n \in \N$. Hence it is infinite.
\end{proof}

\begin{theorem}\label{uniqueness of U}
  The Fraiss\'e limit $U$ is the unique countable model of $T_\Ss^U$ which
  satisfies SEC and ID and has $\td(C/C_0) = \aleph_0$.
\end{theorem}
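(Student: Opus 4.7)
By Proposition~\ref{SEC lemma} and the construction, the Fraissé limit itself is a countable model of $T_\Ss^U + \mathrm{SEC} + \mathrm{ID}$ with $\td(C/C_0) = \aleph_0$, so it suffices to show every such countable model $M$ is isomorphic to $U$. My plan is to prove that $M$ is $\Kso$-saturated with respect to self-sufficient embeddings; once this holds, the uniqueness part of Theorem~\ref{amalgamation theorem} yields $M \cong U$. Identifying the constant subfield of $M$ with the distinguished $C$ of $\Ks$ poses no problem, since any two countable algebraically closed fields of transcendence degree $\aleph_0$ over $C_0$ are isomorphic over $C_0$ by Steinitz, so we may view $M$ as an object of $\Ks_{\le\aleph_0}$.

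For $\Kso$-saturation, let $A \strong M$ be in $\Kso$ and let $A \strong B$ be any extension in $\Kso$. By Lemma~\ref{finitely generated extensions}, the extension $B/A$ is determined up to isomorphism by the triple $(S,V,k)$ with $S = \Smax(B/A)$, $V = \loc_A(g)$ the locus of a basis $g \in \Gamma_S(B)$, and $k = \td(B/A(g))$; moreover $V$ is rotund over $A$ since $A \strong B$. Choose a finitely generated subfield $A_0 \subs A$ containing both a field of definition of $V$ and a transcendence basis of $A$ over $C$, and apply SEC in $M$ to obtain $g_0 \in \Gamma_S(M) \cap V$ generic in $V$ over $A_0 \cup C$. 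Since $A \subs \acl(A_0 \cup C)$, this forces $\td(g_0/A) = \dim V$, so $\loc_A(g_0) = V$ and $g_0$ is a basis for $\Gamma(A(g_0)^{\alg}/A)$ with $\Smax(A(g_0)^{\alg}/A) = S$---any strictly larger $\Smax$ would, via Lemma~\ref{Smax lemma}, confine $g_0$ to a proper $A$-definable subvariety of $V$, contradicting its genericity. Finally, use ID to pick $t_1, \ldots, t_k \in M$ that are pregeometry-independent over $A(g_0)^{\alg}$, and set $B' = \acl(A, g_0, t_1, \ldots, t_k)$; Lemma~\ref{finitely generated extensions} then delivers an isomorphism $B \cong B'$ over $A$.

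The one remaining check---and the step where I expect the main technical work---is that $B' \strong M$. By Lemma~\ref{hulls for selfsuff} this reduces to showing $\hull{Y}_M \subs B'$ for every $Y \finsub B'$, which I would argue by observing that any failure would produce a new $\Gamma$-relation or algebraic dependence among elements of $B'$ realised in $M$, ultimately forcing $g_0$ into a proper subvariety of $V$ definable over $A$ (or violating the chosen independence of the $t_i$). With $\Kso$-saturation of $M$ established and $U$ already $\Kso$-saturated by construction, the standard back-and-forth between enumerations of $M$ and $U$---at each stage replacing the next element by the hull of what has been absorbed and applying $\Kso$-saturation on the opposite side---produces the desired isomorphism $M \cong U$.
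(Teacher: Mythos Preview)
Your overall strategy---prove that $M$ is $\Kso$-saturated and invoke the uniqueness in Theorem~\ref{amalgam is unique}---is exactly the paper's, but the heart of the argument, the step you flag as ``the main technical work'', has a genuine gap that your sketch does not close.

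The problem is that you apply SEC directly to $V = \loc_A(g)$ without first reducing to the case $\dim V = \dim S$. SEC hands you $g_0 \in \Gamma_S \cap V$ which is \emph{algebraically} generic in $V$ over $A \cup C$, but says nothing about what further $\Gamma$-structure $M$ imposes on $\gen{A,g_0}$. When $\dim V > \dim S$ you have $\delta(g_0/A) = \dim V - \dim S > 0$, and nothing prevents $M$ from containing an extra point $g^* \in \Gamma_{S^*}$ algebraic over $A(g_0)$ with $\dim S^* > \dim S$; since $A \strong M$ forces only $\dim S^* \le \td(g^*/A) \le \dim V$, this is no contradiction. So your assertion that a larger $\Smax$ would ``confine $g_0$ to a proper $A$-definable subvariety of $V$'' is simply false in general: the extra $\Gamma$-relation lives \emph{above} $g_0$, not on it, and places no algebraic constraint on $g_0$ over $A$. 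For the same reason, the hull $\hull{A,g_0}$ in $M$ can be strictly larger than $\gen{A,g_0}$, and your final sketch---``a failure would force $g_0$ into a proper subvariety''---does not go through. Thus neither the isomorphism $\gen{A,g_0} \cong B$ over $A$ (which requires $\Smax(\gen{A,g_0}/A) = S$) nor the self-sufficiency $B' \strong M$ is established.

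The paper repairs exactly this point, and the repair is the real content of the proof. First it absorbs the transcendental tail (your $t_1,\ldots,t_k$) into the $\Gamma$-data by enlarging $S$, so one may assume $k=0$. Then it uses ID to choose \emph{$\cl$-independent} hyperplane parameters $p$ and cuts $V$ down (as in Step~1 of Proposition~\ref{EC' implies EC}) to a rotund $V'$ with $\dim V' = \dim S$, defined over $A' = \gen{A,p}$. The $\cl$-independence of $p$ gives $A \strong A' \strong M$. Now SEC produces $h \in \Gamma_S \cap V'$ generic over $A'$, and the key payoff is $\delta(\gen{A'h}/A') = \dim V' - \dim S = 0$: together with $A' \strong M$ and submodularity this \emph{forces} $\gen{A'h} \strong M$, with no room for extra $\Gamma$-structure. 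One then descends to $\gen{Ah}$, which is isomorphic to $B$ over $A$ and is self-sufficient in $M$ because $\td(\gen{A'h}/\gen{Ah}) = d(\gen{A'h}/\gen{Ah})$. The missing idea in your proposal is precisely this ``cut down to $\delta = 0$'' manoeuvre, which is what ID is really being used for.
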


\begin{proof}
  The case where $\Ss = \{1\}$ is trivial, so we assume $\Ss \neq
  \{1\}$. Let $M$ be any such model. We will show that $M$ is
  $\Kso$-saturated. The result follows by theorem~\ref{amalgam is
    unique}. Let $A$ be a self-sufficient finitely generated
  substructure of $M$ and let $A \strong B$ be a self-sufficient
  extension with $B$ finitely generated. We must show that $B$ can be
  embedded self-sufficiently in $M$ over $A$.

  By lemma~\ref{finitely generated extensions}, the extension $B$ of
  $A$ is determined by the group $S = \Smax(B/A)$, the locus
  $\loc_A(g) \subs \Smax(B/A)$ of a basis $g$ for $\Gamma(B/A)$, and
  the natural number $t = \td(B/A(g))$. Suppose that $b$ is a
  transcendence base for $B/A(g)$. Take $S' \in \Ss$ of dimension at
  least $t$ and extend $b$ to an algebraically independent tuple $b'
  \in \ga^{\dim S'}$. Take $s \in S'$ generic over $B(b')$. Then there
  is a self-sufficient extension $B \rStrong B'$ generated by $(b',s)$
  such that $(b',s) \in \Gamma_{S'}$. By replacing $B$ by $B'$, and $S$
  by $S \cross S'$, we may assume that $t=0$, that is, that $B$ is
  generated by $g$ over $A$.

  Let $V = \loc_A(g)$. Then $V$ is rotund and irreducible. We use the
  method of step 1 of the proof of \ref{EC' implies EC} above to
  reduce to replace $V$ by a subvariety $V'$ with $\dim V' = \dim
  \Smax(B/A)$, with $V'$ also rotund and irreducible. However, for
  each generic hyperplane $\Pi_p$, by ID we may choose the
  $p_1,\ldots,p_{N-1}$ not just to be algebraically independent, but
  in fact $\cl$-independent. Let $A'$ be the extension of $A$
  generated by all the $p_i$, for each hyperplane used. Then $A'$ is
  generated over $A$ by $\cl$-independent elements, and hence $A
  \strong A'$ and $A' \strong M$.

  By SEC, there is $h \in \Gamma_S \cap V'$ in $M$, generic in $V'$
  over $A'$. Thus $h$ is also generic in $V$ over $A$. Let $B'' =
  \gen{A'h}$. Then $\delta(B''/A') = 0$, and so $B'' \strong M$. Also
  $B' \leteq \gen{Ah}$ is isomorphic to $B$ over $A$, and $B' \strong
  M$ as $\td(B''/B') = d(B''/B')$. Hence $M$ is $\Kso$-saturated, and
  $M \iso U$, as required.
\end{proof}

\medskip

\begin{defn}\label{defn of T_S}
  Let $T_\Ss$ be the theory $T_\Ss^U$ + EC + NT, that is, U1 --- U7 +
  SP + EC + NT.
\end{defn}
We have already seen (\ref{U axioms are first order}) that U1 --- U7
are expressible as first order axiom schemes, and NT is a first order
axiom.  In section~\ref{fot section} we will show that SP and EC are
also expressible as first order schemes, so $T_\Ss$ is axiomatizable
as a first order theory. We will also show that $T_\Ss$ is complete.


\section{Reducts of differential fields}\label{diff fields section}

\subsection{Differential forms in differential algebra}

Given a field $C$ and a $C$-algebra $A$, we form the $A$-module
$\Omega(A/C)$ of K\"ahler differentials as in \cite{Shafarevich1} or
\cite[p386]{Eisenbud}. If $A$ is a field, $F$, we can identify the
$F$-vector space $\Der(F/C)$ of derivations on $F$ which are constant
on $C$ with the dual space of $\Omega(F/C)$, by means of the universal
property of $\Omega$. If $\omega \in \Omega(F/C)$ and $D \in
\Der(F/C)$ we write $D^*$ for the associated element of
$\Omega(F/C)^*$.

Let $V$ be an irreducible affine variety defined over a field $C$, and
let $A$ be the coordinate ring of $V$, a $C$-algebra. If $F$ is a
field extension of $C$, an $F$-point $x$ of $V$ is associated with a
$C$-algebra homomorphism $\ra{A}{x}{F}$, and by functoriality of
$\Omega$ this defines a map
\map{\Omega(A/C)}{x_*}{\Omega(F/C)}{\omega}{\omega(x)}

More generally, if $V$ is not affine (for example, $V$ is an abelian
variety) we replace $A$ by the sheaf of coordinate rings on $V$, and
consider the module of global differentials which we write
$\Omega[V]$. Again, an $F$-point of $V$ defines a map
\map{\Omega[V]}{x_*}{\Omega(F/C)}{\omega}{\omega(x)}

Allowing $x$ to vary over $V(F)$ gives a map \map{V(F) \cross
\Omega[V]}{}{\Omega(F/C)}{(x,\omega)}{\omega(x)} and fixing $\omega$
gives a map which we write $\ra{V(F)}{\omega}{\Omega(F/C)}$.

If $V$ is a commutative algebraic group $G$ then $\Omega[G]$ is spanned by
a basis of invariant differential forms. These forms are related to
the logarithmic derivative.
\begin{lemma}\label{diff char is homo}
  If $\zeta \in \Omega[G]$ is an invariant differential form then the
  map 
  \map{G(F)}{\zeta}{\Omega(F/C)}{x}{\zeta(x)} is a group homomorphism.

  If $\zeta_1,\ldots,\zeta_n$ is a basis of invariant forms of
  $\Omega[G]$, then the logarithmic derivative $\logd_G(x) =
  \tuple{D^*\zeta_1(x),\ldots,D^*\zeta_n(x)}$.
\end{lemma}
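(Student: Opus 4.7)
The plan is to deduce both statements from the translation-invariance of $\zeta$ together with the basic functoriality of $\Omega$. The key identity to establish first is
\[ m^*\zeta = p_1^*\zeta + p_2^*\zeta \quad \text{in } \Omega[G \cross G], \]
where $\ra{G \cross G}{m}{G}$ is multiplication and $p_1, p_2$ are the two projections. This is the standard characterization of invariant forms on a commutative algebraic group: the form $\eta = m^*\zeta - p_1^*\zeta - p_2^*\zeta$ on $G \cross G$ is invariant under translations in each factor separately (using both the invariance of $\zeta$ and commutativity of $G$), and an easy calculation at the identity $(1,1)$ shows $\eta$ vanishes there. An invariant form which vanishes at one point vanishes everywhere, so $\eta=0$.

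Given this identity, the homomorphism property is almost automatic. For $x,y \in G(F)$ the pair $(x,y)$ is an $F$-point of $G\cross G$, and $m\circ(x,y)$ is the $F$-point $xy$ of $G$. Functoriality of $\Omega$ gives $(xy)_* = (x,y)_* \circ m^*$, while projection onto the $i$-th factor composed with $(x,y)$ gives back $x$ or $y$, so $(x,y)_* \circ p_i^* = x_*$ or $y_*$ respectively. Applying $(x,y)_*$ to the displayed identity then yields
\[ \zeta(xy) = (xy)_*\zeta = (x,y)_*(m^*\zeta) = x_*\zeta + y_*\zeta = \zeta(x) + \zeta(y), \]
i.e.\ $\ra{G(F)}{\zeta}{\Omega(F/C)}$ is a group homomorphism.

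For the second assertion, I would appeal to the standard identification of the space of invariant forms with $(LG)^*$: an invariant form is uniquely determined by its value at the identity in $\Omega[G]_1 \iso (LG)^*$. Fixing a basis $\zeta_1,\ldots,\zeta_n$ of invariant forms picks out the dual basis of $LG$, which identifies $LG(F)$ with $F^n$. Under this identification $\logd_G$ becomes, by its definition via $D$ paired with invariant forms (compare \cite{MarkerMK}), precisely $x \mapsto (D^*\zeta_1(x),\ldots,D^*\zeta_n(x))$. The homomorphism property of each coordinate follows from the first part (and $D^*$ being $F$-linear), which is also consistent with the fact that $\logd_G$ is a group homomorphism.

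The only genuinely content-bearing step is the invariant-form identity $m^*\zeta = p_1^*\zeta + p_2^*\zeta$; the rest is bookkeeping with the universal property of $\Omega$. It is also worth sanity-checking the formula in the two basic cases: $\zeta = dy$ on $\ga$ yields $D^*\zeta(y) = Dy = \logd_{\ga}(y)$, and $\zeta = dy/y$ on $\gm$ yields $D^*\zeta(y) = Dy/y = \logd_{\gm}(y)$, matching the conventions set up in the introduction.
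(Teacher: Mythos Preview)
Your argument is correct: the identity $m^*\zeta = p_1^*\zeta + p_2^*\zeta$ for an invariant form on a commutative group is exactly the right content, and your derivation of it (translation-invariance of $\eta$ in each factor, vanishing at the identity, hence vanishing everywhere) is the standard one. The functoriality step deducing $\zeta(xy) = \zeta(x) + \zeta(y)$ is clean, and the second part is indeed just an unwinding of the identification of invariant forms with $(LG)^*$ together with the definition of $\logd_G$.

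By way of comparison, the paper does not actually give a proof here: it simply cites \cite{MarkerMK} for the statement as a whole and attributes the first part to Rosenlicht \cite{Rosenlicht57}. So you have supplied the details the paper omits. Your argument is essentially Rosenlicht's; the only point worth a remark is that your sanity checks on $\ga$ and $\gm$ are a nice confirmation that the sign and normalisation conventions match those fixed in the introduction.
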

\begin{proof}
  This is a restatement of the last result from \S3 of
  \cite{MarkerMK}. The first part is due to Rosenlicht
  \cite{Rosenlicht57}.
\end{proof}

In \cite{Ax71}, Ax used the \emph{Lie derivative} without naming or
defining it explicitly, and we will use it for the same purpose. Many
differential geometry books give an account of the Lie derivative in
that context, but for clarity we include a description for this
algebraic context.

Rewriting $\Omega(F/C)$ as $\Omega^1(F/C)$, the map
$\ra{F}{d}{\Omega^1(F/C)}$ can be thought of as the coboundary map in
the de Rham complex
\begin{diagram}
  0 & \rTo & F = \Omega^0(F/C) & \rTo^d & \Omega^1(F/C) & \rTo^d
  &\Omega^2(F/C) & \rTo^d & \cdots &.
\end{diagram}
We write $\Omega^\bullet(F/C)$ for the union of the complex.

For any derivation $D \in \Der(F/C)$, the map
$\ra{\Omega^1(F/C)}{D^*}{F}$ defined previously extends to a map
$\ra{\Omega^\bullet(F/C)}{D^*}{\Omega^\bullet(F/C)}$ which is defined
for $\omega \in \Omega^n(F/C)$ by
\[(D^*\omega)(D_1,\ldots,D_{n-1}) = \omega(D,D_1,\ldots,D_{n-1}).\]

This map $D^*$ has degree $-1$, that is if $\omega \in \Omega^n(F/C)$
then $D^*\omega \in \Omega^{n-1}(F/C)$. By definition, $d$ has degree
$+1$. These operations can be combined into an operation of degree 0
\[L_D = D^* \circ d + d \circ D^*\]
called the \emph{Lie derivative} of $D$ on $\Omega^\bullet(F/C)$.

\begin{lemma}\label{Lie derivative properties}
  The Lie derivative $L_D$ has the following properties. Let $\omega
  \in \Omega^1(F/C)$, $D,D' \in \Der(F/C)$, and $a \in F$.
  \begin{enumerate}
  \item $L_D$ is $C$-linear.
  \item $(L_D\omega) D' = D(\omega D') - \omega[D,D']$
  \item $L_D(a\omega) = (Da)\omega + a(L_D \omega)$
  \end{enumerate}
\end{lemma}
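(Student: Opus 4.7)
The plan is to verify the three properties by unwinding the definition $L_D = D^* \circ d + d \circ D^*$ and using standard identities for the de Rham differential $d$ and the contraction operator $D^*$. The key supporting facts I will invoke are: (i) $d$ is $C$-linear and satisfies the graded Leibniz rule (in particular $d(a\omega) = da \wedge \omega + a\, d\omega$ for $a \in F$ and $\omega$ a 1-form); (ii) $D^*$ (contraction with $D$) is $F$-linear and is a graded derivation of degree $-1$, so $D^*(\alpha \wedge \beta) = (D^*\alpha)\wedge \beta + (-1)^{|\alpha|}\alpha \wedge (D^*\beta)$; and (iii) for a 1-form $\omega$, the coboundary formula $(d\omega)(D,D') = D(\omega D') - D'(\omega D) - \omega[D,D']$. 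Each of these is standard in commutative algebra and is available once one has set up the de Rham complex of $F/C$ as in the preceding discussion.

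Property (1) will then be immediate: $d$ is $C$-linear by the universal property of K\"ahler differentials, and $D^*$ is $F$-linear hence a fortiori $C$-linear, so the composition $D^*\circ d + d\circ D^*$ is $C$-linear.

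For property (2), I will compute $(L_D\omega)D'$ by expanding the two summands separately. The term $(d\circ D^*)\omega$ applied to $D'$ is $d(\omega(D))(D') = D'(\omega D)$, since $D^*\omega = \omega(D) \in F$ and $da(D') = D'(a)$. The term $(D^*\circ d)\omega$ applied to $D'$ is $(d\omega)(D,D')$, which by identity (iii) equals $D(\omega D') - D'(\omega D) - \omega[D,D']$. Adding the two and cancelling the $D'(\omega D)$ terms gives exactly $D(\omega D') - \omega[D,D']$, as desired.

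For property (3), I will apply both summands of $L_D$ to $a\omega$, using the Leibniz rules for $d$ and for $D^*$. Expanding $d(a\omega) = da\wedge\omega + a\, d\omega$ and then contracting gives $D^*d(a\omega) = (Da)\omega - (\omega D)\, da + a\, D^*d\omega$, while $d(D^*(a\omega)) = d(a\cdot \omega D) = (\omega D)\, da + a\, d(D^*\omega)$. The two cross terms $\pm(\omega D)\, da$ cancel, and what remains is $(Da)\omega + a\bigl(D^*d\omega + d\,D^*\omega\bigr) = (Da)\omega + a\,L_D\omega$. I do not anticipate a genuine obstacle here; the only subtle point is being careful with the sign in the graded Leibniz rule for the contraction $D^*$ on the 2-form $da\wedge\omega$, which is where the cancellation of the $(\omega D)\, da$ terms comes from.
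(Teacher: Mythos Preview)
Your proof is correct, and for parts (1) and (2) it is essentially identical to the paper's argument: both compute $(L_D\omega)D'$ by expanding the two summands, invoking the coboundary formula $(d\omega)(D,D') = D(\omega D') - D'(\omega D) - \omega[D,D']$, and cancelling the $D'(\omega D)$ terms.

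For part (3) you take a slightly different route. The paper does not compute $L_D(a\omega)$ directly; instead it applies the already-established formula (2) to $a\omega$, obtaining $(L_D(a\omega))D' = D(a\omega D') - a\omega[D,D']$, and then uses the ordinary Leibniz rule for $D$ on the product $a\cdot(\omega D')$ to split off $(Da)\omega D'$ and recognise the remainder as $a(L_D\omega)D'$. Your approach instead expands $d(a\omega)$ and $D^*(a\omega)$ using the graded Leibniz rules for $d$ and $D^*$ on wedge products, and tracks the cancellation of the $(\omega D)\,da$ cross terms. Both are valid; the paper's version is a bit shorter since it reuses (2) and avoids having to invoke the graded derivation property of the contraction, while yours is more self-contained and makes the Cartan-calculus structure explicit.
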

\begin{proof}
  1. is immediate, since $d$ and $D^*$ are $C$-linear. For 2,
  \begin{eqnarray*}
    (L_D\omega) D' &=&(D^* d\omega)D' + (d(\omega D))D'\\
    &=& (d\omega)(D,D') + D'(\omega D)\\
    &=& D(\omega D') - D'(\omega D) - \omega[D,D'] +
    D'(\omega D)\\
    &=&  D(\omega D') - \omega[D,D']
  \end{eqnarray*} 
  and for 3,
  \begin{eqnarray*}
    L_D(a\omega) D' &=&  D(a\omega D') - a\omega[D,D']\\
    &=& (Da)\omega D' + aD(\omega D') - a \omega[D,D']\\
    &=& (Da)\omega D' + a(L_D \omega)D'.
  \end{eqnarray*} 
\end{proof}

A standard fact which we need is that invariant differential forms are
closed in the sense of de Rham cohomology.
\begin{lemma}\label{invariant form is closed}
  Let $G$ be a commutative algebraic group defined over $C$. Let
  $\omega \in \Omega[G]$ be an invariant differential form on $G$, and
  let $x\in G(F)$. Then $\omega(x)$ is a closed K\"ahler differential
  in $\Omega(F/C)$, that is, $d\omega(x) = 0$ in $\Omega^2(F/C)$.
\end{lemma}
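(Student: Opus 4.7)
The plan is to reduce the statement to the assertion that an invariant $1$-form on a commutative algebraic group is closed as a global $2$-form on $G$, and then deduce the Kähler-differential version by pullback along $x$.

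First I would observe that the map $x_* : \Omega[G] \to \Omega(F/C)$ induced by an $F$-point $x \in G(F)$ is a morphism of de Rham complexes, so $d$ commutes with pullback: $d(x_*\omega) = x_*(d\omega)$. Hence it is enough to show that $d\omega = 0$ in $\Omega^2[G]$, since then $d\omega(x) = x_*(d\omega) = 0$ in $\Omega^2(F/C)$.

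Next I would reduce the vanishing of $d\omega$ on $G$ to vanishing at the identity. Since pullback under any translation commutes with $d$, and $\omega$ is invariant, $d\omega$ is also an invariant $2$-form; an invariant form is zero iff its value at the identity is zero, so it suffices to show $(d\omega)|_e = 0$. Equivalently, using the identification of $LG$ with the space of invariant derivations on the function field of $G$, it suffices to show that $(d\omega)(D_1,D_2) = 0$ for any two invariant derivations $D_1,D_2$ on $G$ (since invariant vector fields span the tangent space at every point and $d\omega$ is invariant).

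The computation is then the standard Cartan-type identity
\[ (d\omega)(D_1,D_2) \;=\; D_1(\omega(D_2)) \;-\; D_2(\omega(D_1)) \;-\; \omega([D_1,D_2]), \]
which can be read off from part 2 of Lemma~\ref{Lie derivative properties} together with the definition $L_{D_i}\omega = D_i^* d\omega + d(D_i^* \omega)$. Now $\omega$ invariant and $D_i$ invariant imply that $\omega(D_i)$ is a constant (i.e.\ lies in $C$), so $D_j(\omega(D_i)) = 0$ for both $j$. Crucially, because $G$ is \emph{commutative} the Lie algebra $LG$ is abelian, so $[D_1,D_2] = 0$ and the last term vanishes as well. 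Thus $(d\omega)(D_1,D_2) = 0$, proving $d\omega = 0$ and hence $d\omega(x) = 0$.

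The main obstacle is purely bookkeeping: making precise the passage between invariant differential forms on $G$ (as in $\Omega[G]$), invariant derivations on the function field, and their evaluation at a specific $F$-point via Kähler differentials — but each link is standard and, for the paper's setup, already codified in Lemma~\ref{diff char is homo} and Lemma~\ref{Lie derivative properties}. The genuinely algebro-geometric input is only the fact that on a connected commutative algebraic group the Lie bracket of invariant derivations vanishes, which is immediate from commutativity of the multiplication.
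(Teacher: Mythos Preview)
Your proof is correct and follows essentially the same route as the paper: reduce to showing $d\omega = 0$ as a form on $G$, apply the Cartan formula $(d\omega)(D_1,D_2) = D_1(\omega D_2) - D_2(\omega D_1) - \omega[D_1,D_2]$ to invariant vector fields, and use that $\omega(D_i)$ is constant (by invariance of both $\omega$ and $D_i$) together with $[D_1,D_2] = 0$ (by commutativity of $G$). Your presentation is slightly cleaner in making the pullback step $d(x_*\omega) = x_*(d\omega)$ explicit, but the mathematical content is identical to the paper's argument.
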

We give a proof for completeness. See for example \cite{MarkerMK} for
notation.
\begin{proof}
  The Lie algebra $L$ of $G(C)$ is canonically isomorphic to the space
  of invariant vector fields on $G(C)$, and is a $C$-vector space of
  dimension $n = \dim G$. Let $X_1,\ldots,X_n$ be a basis of $L$. The
  vector space $\Der(F/C)$ is canonically isomorphic to the space of
  all $F$-valued invariant vector fields on $G(C)$, which is $L
  \otimes_C F$, so $X_1,\ldots,X_n$ also forms an $F$-basis of
  $\Der(F/C)$. Let $D_1,D_2 \in \Der(F/C)$, say $D_1 = \sum_{i=1}^n
  a_iX_i$ and $D_2 = \sum_{i=1}^n b_iX_i$ with the $a_i,b_i \in F$.
  Then
  \begin{eqnarray*}
    d\omega(D_1,D_2) & = & d\omega\left(\sum_{i=1}^n a_iX_i, \sum_{i=1}^n
    b_iX_i\right)\\
    & = & \sum_{i,j}a_i b_j d\omega(X_i,X_j) \quad \mbox{by bilinearity of
    } d\omega\\
    & = & \sum_{i,j}a_i b_j (X_i(\omega X_j) - X_j(\omega X_i) -
    \omega[X_i,X_j]) \quad .
  \end{eqnarray*}
  Now $\omega$ and $X_j$ are both invariant, so for any $x,y \in G(C)$,
  \begin{eqnarray*}
    (\omega X_j)_{xy} & = & \omega_{xy}(X_j)_{xy}\\
    & = & \omega_y(d\lambda^{x^{-1}}_xy(X_j)_{xy})\\
    & = & \omega_y(d\lambda^{x^{-1}}_xy d\lambda^x_y (X_j)_y)\\
    & = & \omega_y(X_j)_y\\
    & = & (\omega X_j)_y
  \end{eqnarray*}
  and so $\omega X_j$ is a constant scalar field on $G(C)$. Thus
  $X_i(\omega X_j) = 0$, and similarly $X_j(\omega X_i) = 0$. So
  \[d\omega(D_1,D_2) = - \sum_{i,j}a_i b_j \omega[X_i,X_j]\]
  but $[\ ,\ ]$ is the bracket on the Lie algebra of $G$, and $G$ is
  commutative so the bracket is identically zero. So $d\omega(D_1,D_2)
  = 0$ for all $D_1,D_2 \in \Der(F/C)$, and hence $d\omega = 0$.
\end{proof}

\subsection{The algebraic axioms}

The vector space $\Omega[G]$ is associated with the cotangent space of
$G$ at the identity, that is, with the dual of $LG$. Thus the
canonical isomorphism between $LG$ and $LLG$ gives rise to a canonical
isomorphism between $\Omega[G]$ and $\Omega[LG]$. 

Let $(\zeta_1,\ldots,\zeta_n)$ be a basis of the space of invariant
differential forms on $S$ and let $(\xi_1,\ldots,\xi_n)$ be the
corresponding basis of the space of invariant differential forms on
$LS$. Write $\omega_i(x,y) = \zeta_i(y) - \xi_i(x)$, for each
$i=1,\ldots,n$.

Recall that the tangent bundle $TS$ is identified with $LS \cross S$, and $\Gamma_S$ is defined by \[(x,y) \in \Gamma_S \iff \logd_{LG}(x) = \logd_{G}(y).\]
Translating the definition of the logarithmic derivatives into coordinates using
lemma~\ref{diff char is homo} gives us an alternative characterization of $\Gamma_S$ in terms of the differential forms $\omega_i$.
\begin{lemma}\label{Gamma in coords}
  Let $x \in LS(F)$ and $y\in S(F)$, and let the differentials
  $\omega_i$ be defined as above. Then $(x,y) \in \Gamma_S$ iff for
  each $i=1,\ldots,n$, the equation $D^*\omega_i(x,y) = 0$ holds. \qed
\end{lemma}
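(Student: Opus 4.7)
The plan is simply to unwind the definitions and apply Lemma \ref{diff char is homo} componentwise.

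First I would recall that, by definition, $(x,y) \in \Gamma_S$ means $\logd_{LS}(x) = \logd_S(y)$ after the canonical identification $LLS \iso LS$. Under that identification, the basis $(\zeta_1,\ldots,\zeta_n)$ of invariant forms on $S$ corresponds precisely to the chosen basis $(\xi_1,\ldots,\xi_n)$ of invariant forms on $LS$, since $\Omega[S]$ is the cotangent space at the identity (dually to $LS$) and the paper has set up $\xi_i$ as the invariant form on $LS$ corresponding to $\zeta_i$ under $LS \iso LLS$. So the coordinates of $\logd_S(y)$ and $\logd_{LS}(x)$ in the respective bases can be compared directly.

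Next, applying Lemma \ref{diff char is homo} to both $S$ and $LS$ gives
\[
\logd_S(y) = \tuple{D^*\zeta_1(y),\ldots,D^*\zeta_n(y)}, \qquad
\logd_{LS}(x) = \tuple{D^*\xi_1(x),\ldots,D^*\xi_n(x)},
\]
expressed in the matching bases. Hence $\logd_{LS}(x) = \logd_S(y)$ holds iff $D^*\xi_i(x) = D^*\zeta_i(y)$ for each $i=1,\ldots,n$.

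Finally I would use $\C$-linearity of $D^*$ (the operator on $\Omega^\bullet(F/C)$ is by construction $F$-linear on each $\Omega^n$, and in particular additive) to rewrite this coordinate-wise equality as
\[
D^*\bigl(\zeta_i(y) - \xi_i(x)\bigr) = 0,
\]
which by the definition $\omega_i(x,y) = \zeta_i(y) - \xi_i(x)$ is exactly $D^*\omega_i(x,y) = 0$. This chain of equivalences, carried out for each $i$, gives the lemma.

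There is no real obstacle here; the only thing one has to be careful about is the canonical identification $LLS \iso LS$ and the corresponding identification of the dual bases $(\xi_i)$ and $(\zeta_i)$, so that the two logarithmic derivatives are genuinely expressed in a common coordinate system before comparing them entry by entry.
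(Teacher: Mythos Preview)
Your argument is correct and is exactly what the paper intends: the lemma is stated with a \qed and the preceding sentence says it follows by translating the definition of the logarithmic derivatives into coordinates via Lemma~\ref{diff char is homo}. You have simply written out that translation explicitly, including the care needed about the identification $LLS \iso LS$ and the corresponding bases.
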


Consider a differential field of characteristic zero
$\tuple{F;+,\cdot,D}$, and let $C$ be the constant field. As described
in the introduction, we consider the reduct of $F$ to the language
$\tuple{F;+,\cdot,C,(\Gamma_S)_{S \in\Ss}, (\hat{c})_{c\in C_0}}$. We
also consider a slight generalization. Suppose now that $F$ is a field
with a family $\Delta$ of derivations, such that $C = \bigcap_{D \in
\Delta} \ker D$. For each $D \in \Delta$, we can consider the
solution set $\Gamma_{S,D}$ of the exponential differential equation
for $S$ with respect to $D$. Write $\Gamma_S = \bigcap_{D\in
\Delta}\Gamma_{S,D}$.

Given a finite set of derivations $\Delta = \{D_1,\ldots,D_r\}$ on
$F$, and a tuple $a = \tuple{a_1,\ldots,a_n}$ from $F$, define the
Jacobian matrix of $a$ with respect to $\Delta$ to be
\[\Jac_\Delta(a) = \begin{pmatrix}
  D_1 a_1 & \cdots & D_1 a_n\\ 
  \vdots & \ddots & \vdots\\ 
  D_r a_1 & \cdots & D_r a_n
\end{pmatrix}\]
and write $\rk \Jac_\Delta(a)$ to be the rank of this matrix. 

If $\Delta$ is an infinite set of derivations, the rank of the
Jacobian matrix is then defined to be 
\[\rk\Jac_\Delta(a) = \max\class{\rk\Jac_{\Delta'}(a)}{\Delta'
 \mbox{ is a finite subset of } \Delta}.\] 
The rank of the matrix is bounded by the number $n$ of columns, so
this maximum is well defined. We will not usually write the dependence
on $\Delta$ explicitly, so will write this simply as $\rk\Jac(a)$.

\begin{prop}\label{reduct satisfies U axioms}
  Let $\tuple{F;+,\cdot,D}$ be a differential field, let $C_0$ be a
  subfield of the field of constants $C$, and let $\Ss$ be a
  collection of semiabelian varieties, each defined over $C_0$.
  Then the reduct $\tuple{F;+,\cdot,C,\{\hat{c}\}_{c \in
  C_0},\{\Gamma_S\}_{S\in \Ss}}$ satisfies the axioms U2---U7 and
  U1$'$, the universal part of U1.
\end{prop}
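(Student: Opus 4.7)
The plan is to verify each axiom by invoking standard properties of the logarithmic derivative recorded in Lemma \ref{diff char is homo} (and the coordinate characterization in Lemma \ref{Gamma in coords}) together with elementary differential algebra. Three facts do almost all the work: for a commutative algebraic group $G$ defined over the constant field $C$, the map $\logd_G\colon G(F) \to LG(F)$ is (i) a group homomorphism, (ii) with kernel exactly $G(C)$, and (iii) natural in $G$ with respect to algebraic group homomorphisms defined over $C$. The verification of U1$'$ is a separate classical differential-algebra computation.

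For U1$'$ the field axioms are universal and hold by hypothesis, and $C = \bigcap_{D \in \Delta} \ker D$ is visibly a subfield of $F$ containing $C_0$ with the constant symbols $\hat c$ interpreted correctly. Relative algebraic closure of $C$ in $F$ is the standard argument: if $\alpha \in F$ has minimal polynomial $p(T) \in C[T]$, then applying any $D \in \Delta$ to $p(\alpha)=0$ gives $p'(\alpha)\cdot D\alpha = 0$; in characteristic zero $p'(\alpha)\neq 0$, so $D\alpha = 0$ for every $D \in \Delta$ and hence $\alpha \in C$.

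Axioms U2--U4 are read off directly from $\Gamma_S = \{(x,y) : \logd_{LS}(x) = \logd_S(y)\}$: it is the kernel of a group homomorphism on $TS(F)$, giving U2; constant points have vanishing logarithmic derivative, yielding $TS(C) \subseteq \Gamma_S$ for U3; and U4 is the combination of $\ker \logd_{LS} = LS(C)$, $\ker \logd_S = S(C)$, and $\logd_G(1)=0$. For U6, $\logd_S$ restricted to a subgroup $S_1 \subseteq S$ agrees with $\logd_{S_1}$, so $\Gamma_{S_1} = \Gamma_S \cap TS_1$. For U7, $\logd$ commutes with products, giving $\Gamma_{S_1 \times S_2} = \Gamma_{S_1} \times \Gamma_{S_2}$. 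For U5, an algebraic homomorphism $f\colon S_1 \to S_2$ (defined over $C_0 \subseteq C$ by assumption) yields the naturality square $(Lf) \circ \logd_{S_1} = \logd_{S_2} \circ f$, and the analogous square at the level of $LS_i$ via the canonical identification $LL \cong L$; applying $Lf$ to the equation defining $\Gamma_{S_1}$ then produces the equation defining $\Gamma_{S_2}$, so $(Tf)(\Gamma_{S_1}) \subseteq \Gamma_{S_2}$.

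The one point requiring slightly more care, and the main, minor, obstacle, is the isogeny case of U5, namely the reverse inclusion $(Tf)^{-1}(\Gamma_{S_2}) \subseteq \Gamma_{S_1}$. Here I use that an isogeny has finite kernel, which in characteristic zero has zero-dimensional Lie algebra, so the induced $Lf\colon LS_1 \to LS_2$ is injective, hence (by equality of dimensions) an isomorphism. Unfolding $Tf(x,y) \in \Gamma_{S_2}$ via naturality gives $Lf\bigl(\logd_{LS_1}(x) - \logd_{S_1}(y)\bigr) = 0$, and injectivity of $Lf$ forces $(x,y) \in \Gamma_{S_1}$, completing the verification.
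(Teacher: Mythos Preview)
Your proof is correct. The overall structure matches the paper's: U1$'$ is the standard relative-algebraic-closure argument, U2--U4 and U7 are read off from the definition of $\Gamma_S$ as a kernel, and U5--U6 require functoriality of the logarithmic derivative.

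The one genuine difference is in how you handle U5 and U6. The paper works throughout in the dual picture of invariant differential forms, using Lemma~\ref{Gamma in coords}: for U5 it pulls back an invariant form $\omega$ on $TS_2$ via $Tf_*$, observes that the pullback is again invariant, and for the isogeny case uses that $Tf_*$ is an isomorphism of the spaces of invariant forms; for U6 it extends a basis of invariant forms on $S_1$ to one on $S_2$. You instead argue directly with the logarithmic derivative, invoking the naturality square $(Lf)\circ \logd_{S_1} = \logd_{S_2}\circ f$ and, for the isogeny clause, the fact that a finite kernel has trivial Lie algebra so $Lf$ is injective. These are dual formulations of the same content: your isomorphism $Lf\colon LS_1 \to LS_2$ is exactly the dual of the paper's isomorphism $Tf_*$ on invariant forms. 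Your route is slightly more economical since it avoids unpacking the coordinate description, though note that the naturality of $\logd$ and the identification $\ker\logd_G = G(C)$ are not literally stated in Lemma~\ref{diff char is homo} as you suggest; they are, however, standard consequences of the construction in \cite{MarkerMK}.
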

\begin{proof}
  Axiom U1$'$ says that $F$ is a field, $C$ is a relatively
  algebraically closed subfield, and the constants $\hat{c}$ have the
  correct algebraic type, all of which holds in the reduct.

  For U2, $\Gamma_S$ is the kernel of the group homomorphism 
  \map{TS(F)}{}{LS(F)}{(x,y)}{\logd_S(y) - \logd_{LS}(x)}
  and so is a subgroup of $TS$.

  The logarithmic derivatives $\logd_S$ and $\logd_{LS}$ vanish on the
  $C$-points of $S$ and $LS$ respectively, so $TS(C) \subs \Gamma_S$,
  which is U3.

  The fibre of $x=0$ is $\class{y\in S(F)}{\logd_S(y)=0}$ which is
  $S(C)$. Similarly, the fibre of $y=0$ is $LS(C)$. This is axiom U4.

  Suppose that $\ra{S_1}{f}{S_2}$ is an algebraic group homomorphism,
  and let $(x,y) \in \Gamma_{S_1}$. Let $\zeta$ be an invariant
  differential form on $S_2$, let $\xi$ be the corresponding invariant
  form on $LS_2$, and let $\omega = \zeta - \xi$. To show $Tf(x,y) \in
  \Gamma_{S_2}$, it suffices to show that $D^*\omega(Tf(x,y)) = 0$. But
  \begin{equation*}
    D^*\omega(Tf(x,y)) = D^*\zeta(f(x)) - D^*\xi(df_e(y))\\ 
    = D^*(f_*\zeta)(y) - D^*({df_e}_*(\xi))(x)
  \end{equation*}
  where $f_*$ and ${df_e}_*$ denote the images of $f$ and $df_e$ under
  the contravariant cotangent bundle functor. The image of an
  invariant form is an invariant form, and so $f_*\zeta$ and
  ${df_e}_*(\xi)$ are corresponding invariant differential forms on
  $S_1$ and $LS_1$. Hence $D^*\omega(Tf(x,y)) = 0$, since $(x,y) \in
  \Gamma_{S_1}$.

  Now suppose that $f$ is an isogeny. Let $(v,w) \in \Gamma_{S_2}$ and
  let $(x,y)\in TS_1$ such that $Tf(x,y) = (v,w)$. Let $\zeta$ be an
  invariant form on $S_1$, let $\xi$ be the corresponding
  invariant form on $LS_1$, and let $\omega = \zeta - \xi$ on
  $TS_1$. Since $f$ is an isogeny, the map $Tf_*$ is an isomorphism
  between the spaces of invariant forms on $TS_2$ and $TS_1$. Let
  $\eta = (Tf_*)^{-1}(\omega)$. Now
  \begin{equation*}
    D^*(\omega(x,y)) = D^*(Tf_* \eta)(x,y) = D^*\eta((Tf)(x,y))\\ 
    = D^*\eta(v,w) = 0
  \end{equation*}
  so $(x,y) \in \Gamma_{S_1}$. This proves axiom U5.

  If $S_1 \subs S_2$, let $\zeta_1,\ldots,\zeta_m$ be a basis of
  invariant differential forms on $S_1$ and extend to a basis
  $\zeta_1,\ldots,\zeta_n$ of invariant differential forms on
  $S_2$. Let $\xi_1,\ldots,\xi_n$ be the corresponding basis of
  invariant differential forms on $LS_2$, and let $\omega_i(x,y) =
  \zeta_i(y)-\xi_i(x)$. If $g\in \Gamma_{S_1}$ then $D^*\omega_i(g) =
  0$ for $i=1,\ldots,m$ by definition of $\Gamma_{S_1}$ and for
  $i=m+1,\ldots,n$ because $g \in TS_1$. So $g \in
  \Gamma_{S_2}$. Conversely, if $g \in \Gamma_{S_2} \cap TS_1$ then
  $D^*\omega_i(g) = 0$ for $i=1,\ldots,m$ and so $g\in
  \Gamma_{S_1}$. So U6 holds.

  The logarithmic derivative of a product is given componentwise, that
  is, $\logd_{G_1 \cross G_2}(g_1,g_2) =
  (\logd_{G_1}(g_1),\logd_{G_2}(g_2))$. Axiom U7 follows.
\end{proof}

\subsection{The Schanuel property}

Next we prove the Schanuel property, in a slightly stronger form for
differential fields with a family of derivations. The following lemma
on algebraic subgroups of $TS$ is central to the proof.
\begin{lemma}\label{subgroups of TS}
  Let $S$ be a semiabelian variety, and let $G$ be an algebraic
  subgroup of $TS = LS \cross S$. Then $G$ is of the form $G_1 \cross
  G_2$ for some subgroup $G_1$ of $LS$ and some subgroup $G_2$ of $S$.
\end{lemma}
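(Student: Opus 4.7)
The plan is a Goursat-style analysis of subgroups of the direct product $LS \times S$, exploiting the contrast that $LS$ is a vector group while $S$ is semiabelian and hence has no $\ga$-subgroup.

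First I would set up the two projections $p_1 \colon TS \to LS$ and $p_2 \colon TS \to S$, put $G_i = p_i(G)$, and define the fibre groups
\[
H_1 = \{a \in LS : (a, 0) \in G\}, \qquad H_2 = \{b \in S : (0, b) \in G\},
\]
which are algebraic subgroups of $G_1$ and $G_2$ respectively. The identities $(a,b) = (a,0) + (0,b)$ and $(0,b) = (a,b) - (a,0)$ yield
\[
H_1 \times H_2 \subseteq G \subseteq G_1 \times G_2,
\]
and a routine Goursat-type argument produces an algebraic group isomorphism
\[
\varphi \colon G_1/H_1 \riso G_2/H_2
\]
whose graph is $G/(H_1 \times H_2)$. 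The target claim is thereby reduced to showing that $\varphi$ has trivial domain and codomain.

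For the left-hand side: in characteristic zero every algebraic subgroup of $LS \iso \ga^n$ is a vector subspace, so $G_1$ is a vector group and hence $G_1/H_1$ is a (connected) vector group. For the right-hand side: the identity component $G_2^0$ is a connected commutative algebraic subgroup of the semiabelian variety $S$, and contains no $\ga$-subgroup since $S$ does not, so $G_2^0$ is itself semiabelian. Since $G_2/H_2 \iso G_1/H_1$ is connected, the image of $G_2^0$ in $G_2/H_2$ is connected of finite index, hence surjects, so $G_2^0 \cdot H_2 = G_2$ and therefore
\[
G_2/H_2 \iso G_2^0/(G_2^0 \cap H_2).
\]
I would then invoke the standard fact that a quotient of a semiabelian variety by any closed subgroup is again semiabelian to conclude that $G_2/H_2$ is semiabelian.

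The decisive --- and very short --- observation is that a connected commutative algebraic group which is simultaneously a vector group and semiabelian must be trivial, since any nonzero vector group contains a copy of $\ga$. This kills $\varphi$: we get $G_1/H_1 = 0 = G_2/H_2$, so $G_1 = H_1$ and $G_2 = H_2$, and the sandwich collapses to $G = H_1 \times H_2 = G_1 \times G_2$, as required. The only genuinely nontrivial ingredient, and the step where I would flag care, is the algebraic-group fact that quotients of semiabelian varieties remain semiabelian (handled by first quotienting by the connected component, then by the resulting finite subgroup); everything else is essentially formal.
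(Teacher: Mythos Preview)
Your proof is correct and follows the same Goursat-style argument as the paper. The paper is marginally more economical: rather than passing to the isomorphism $G_1/H_1 \cong G_2/H_2$ and verifying that the right-hand quotient is semiabelian, it works directly with the surjection $\theta\colon G_2 \to G_1/H_1$ and invokes that a semiabelian-by-finite group has no nonzero regular homomorphism to a vector group, which sidesteps the quotient analysis you flagged as delicate.
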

\begin{proof}
  Let $G_1 = \pr_{LS}(G)$ and $G_2 = \pr_S(G)$. Write $0$ for the
  identity element of $LS$ and $1$ for the identity element of
  $S$. Define subgroups $H_1 = \class{x \in G_1}{(x,1)\in G}$ and $H_2
  = \class{y \in G_2}{(0,y)\in G}$ and define a quotient map $\ra{G_2}{\theta}{G_1/H_1}$
  by $\theta(y) = \class{x \in G_1}{(x,y)\in G}$. It is easy to check
  that $\theta$ is a regular group homomorphism with kernel $H_2$.

  $G_1/H_1$ is a vector group, since algebraic subgroups and quotients
  of vector groups are vector groups. $G_2$ is an algebraic subgroup
  of a semiabelian variety, so is semiabelian-by-finite. But the only
  regular homomorphism from a semiabelian-by-finite group to a vector
  group is the zero homomorphism, so $H_2 = G_2$, and thus also $H_1 =
  G_1$. Hence $G = G_1 \cross G_2$.
\end{proof}

We separate out the following intermediate step from the proof of the
Schanuel property, as it will also be used later to prove EC. Recall
the definition of the $\omega_i$ from before lemma~\ref{Gamma in
  coords}.
\begin{prop}\label{Main Schanuel prop}
  Suppose $(x,y) \in \Gamma_S$ and the differentials
  $\omega_1(x,y),\ldots,\omega_n(x,y)$ are $F$-linearly dependent in
  $\Omega(F/C)$. Then there is a proper algebraic subgroup $H$ of $S$
  and a point $\gamma \in TS(C)$ such that $(x,y)$ lies in the coset
  $\gamma \cdot TH$.
\end{prop}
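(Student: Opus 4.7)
The plan is to follow James Ax's strategy: first use Lie derivatives to propagate the given $F$-linear dependence into one with constant coefficients, then extract a proper algebraic subgroup from the resulting $C$-linear relation using the algebraic structure of semiabelian varieties.

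For the first step, choose a minimal $F$-linear dependence $\sum_i f_i\,\omega_i(x,y) = 0$ (fewest nonzero terms), and after reindexing normalize $f_1 = 1$. For each $D \in \Delta$, apply $L_D$; by part 3 of Lemma \ref{Lie derivative properties},
\[
\sum_i (Df_i)\,\omega_i(x,y) + \sum_i f_i\,L_D(\omega_i(x,y)) = 0.
\]
Each $L_D(\omega_i(x,y))$ vanishes: using $L_D = D^* d + d D^*$, the first summand is zero because $d(\omega_i(x,y)) = (d\omega_i)(x,y) = 0$ by Lemma \ref{invariant form is closed} (noting that $TS$ is commutative), and the second is zero because $D^*(\omega_i(x,y)) = 0$ by Lemma \ref{Gamma in coords} together with $(x,y) \in \Gamma_S$. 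Hence $\sum_i (Df_i)\,\omega_i(x,y) = 0$; since $Df_1 = 0$, this is a shorter relation, so by minimality all $Df_i = 0$, and ranging over $D \in \Delta$ (with common kernel $C$) each $f_i$ lies in $C$. We thus have a nontrivial $C$-linear relation $\sum_i c_i\,\omega_i(x,y) = 0$.

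For the second step, set $\zeta = \sum_i c_i \zeta_i$ and $\xi = \sum_i c_i \xi_i$, nonzero invariant forms on $S$ and $LS$ respectively, both defined over $C$; the relation reads $\zeta(y) = \xi(x)$ in $\Omega(F/C)$. Since $LS$ is a vector group, $\xi = d\phi$ for the linear functional $\phi$ on $LS$ corresponding to $\xi$, so the relation becomes $\zeta(y) = d(\phi(x))$: the closed form $\zeta(y)$ is exact. I would then argue by induction on $\dim S$. If $\zeta$ pulls back from the quotient $S/H$ for some proper algebraic subgroup $H \leq S$ (equivalently, $\zeta$ vanishes on $LH$), apply the proposition inductively to the image of $(x,y)$ in $T(S/H)$, using that the quotient of a semiabelian variety is semiabelian and the $\omega_i$ behave functorially under quotients, and lift the resulting constant point along the surjection $TS(C) \twoheadrightarrow T(S/H)(C)$. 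Otherwise, no proper descent exists, and the rigidity of the exactness condition $\zeta(y) = d(\phi(x))$ should force $y \in S(C)$; then $(x, 1)$ lies in $\Gamma_S$ by axiom U2 and U4, giving $x \in LS(C)$, so $(x,y) \in TS(C)$ and we take $H = \{e\}$ with $\gamma = (x,y)$.

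The main obstacle is this last point: showing that failure of descent really forces $y \in S(C)$. This requires careful analysis of the relationship between invariant differential forms and algebraic subgroups of semiabelian varieties, using Lemma \ref{subgroups of TS} to control the structure of subgroups of $TS$, and likely treating the toric and abelian parts of $S$ uniformly — in the toric case exactness of $\zeta(y)$ amounts to an integral multiplicative relation between the coordinates of $y$, while in the abelian case it expresses vanishing of a period-like cohomology class.
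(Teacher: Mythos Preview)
Your Step~1 is correct and matches the paper's argument exactly.

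Your Step~2 diverges from the paper, and the gap you flag is real and not easily closed along the lines you sketch. The problematic case is when $S$ is a simple abelian variety of dimension $\ge 2$: then the only proper connected algebraic subgroups are trivial, so \emph{no} nonzero invariant form $\zeta$ descends to a proper quotient, and your induction never reduces the dimension. You are then forced to prove directly that a single exactness relation $\zeta(y)=d(\phi(x))$ pins $y$ down to $S(C)$, but this is essentially the full content of the proposition in the simple case and does not follow from the toric/abelian period heuristics you gesture at. In particular there is no algebraic subgroup attached to an arbitrary $C$-line in $(LS)^*$ that you could use here.

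The paper avoids this entirely by working on $TS$ rather than on $S$. It takes $\eta=\sum_i\alpha_i\omega_i$ as a nonzero invariant form on $TS$; by Lemma~\ref{diff char is homo} the map $g\mapsto\eta(g)$ is a group homomorphism $TS\to\Omega(F/C)$, so $\ker\eta$ is a proper subgroup of $TS$ containing $(x,y)$. Since $\eta$ is defined over $C$, the kernel is $\Aut(\bar F/C)$-invariant, hence contains $V=\loc_C(x,y)$; translating $V$ by a $C$-point so it passes through the identity and applying Chevalley's indecomposability theorem shows that $V$ generates a proper \emph{algebraic} subgroup $G$ of $TS$. Lemma~\ref{subgroups of TS} then forces $G=J\times H$ with $H$ a proper algebraic subgroup of $S$, and axiom~U4 upgrades this to a constant coset of $TH$. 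The indecomposability step is what produces an algebraic subgroup from a single linear relation with no induction and no descent hypothesis; this is the idea your proposal is missing.
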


\begin{proof}

\begin{step}[Step 1: $C$-linear dependence]
  Take $\alpha_i \in F$ such that $\sum_{i=1}^n \alpha_i \omega_i(x,y)
  = 0$ is a minimal $F$-linear dependence on the $\omega_i$, that is,
  if $I = \class{i}{\alpha_i \neq 0}$ then the $F$-linear dimension of
  $\class{\omega_i}{i \in I}$ is $|I| - 1$. Dividing by some non-zero
  $\alpha_i$, we may assume that for some $i = i_0$, $\alpha_{i_0} =
  1$.

  Applying the Lie derivative $L_{D}$ for $D \in \Delta$ we get
  \begin{eqnarray*}
    0 = L_{D} \sum_{i=1}^n \alpha_i \omega_i(x,y) & = & \sum_{i=1}^n
    [(D\alpha_i)\omega_i(x,y) + \alpha_i L_{D} \omega_i(x,y)]\\
    & = & \sum_{i=1}^n [(D\alpha_i) \omega_i(x,y) + \alpha_i(
    d D^* \omega_i(x,y) + D^* d \omega_i(x,y))]\\
    & = & \sum_{i=1}^n (D\alpha_i) \omega_i(x,y)
  \end{eqnarray*}
  using the properties of the Lie derivative given in lemma~\ref{Lie
  derivative properties}. The last equality uses the fact that $(x,y)
  \in \Gamma_S$, and so $D^* \omega_i(x,y) = 0$ for each $i$. It also
  uses the fact that each differential $\omega_i(x,y)$ is a difference
  of invariant differentials, and hence is closed by
  lemma~\ref{invariant form is closed}, so $d\omega_i(x,y) =
  0$ for each $i$.

  Now $\alpha_{i_0} = 1$, so $D\alpha_{i_0} = 0$ but then, by the
  minimality of set $I$, we have that $D\alpha_i = 0$ for every $i$
  and each $D \in \Delta$, so each $\alpha_i \in C$. Hence the
  $\omega_i(x,y)$ are $C$-linearly dependent.
\end{step}

\begin{step}[Step 2:\footnote{Thanks to Piotr Kowalski for an improved
    argument in step 2.} A subgroup of $TS$]
  Let $\eta = \sum_{i=1}^n \alpha_i \omega_i$. Then $\eta$ is an
  invariant differential form on $TS$, defined over $C$.

  By lemma~\ref{diff char is homo}, $\eta$ defines a group
  homomorphism $\ra{TS}{}{\Omega(F/C)}$, so $\ker \eta$ is a subgroup
  of $TS$. The $\omega_i$ are linearly independent, so $\eta \neq 0$
  and hence $\ker \eta$ is a proper subgroup of $TS$. By construction,
  $(x,y) \in \ker \eta$.

  Let $V = \loc_C(x,y)$, the algebraic locus of $(x,y)$ over $C$, and
  an algebraic subvariety of $TS$. The field $C$ is algebraically
  closed, so $V$ has a $C$-point, say $\gamma = (\gamma_1,\gamma_2)$,
  with $\gamma_1 \in LS$ and $\gamma_2 \in S$. Let $V' = \class{v
  \gamma^{-1}}{v \in V}$. Then $V'$ is an irreducible algebraic
  variety defined over $C$, containing the identity of $TS$, and
  having $(x',y') = (x\gamma_1^{-1},y\gamma_2^{-1})$ as a generic
  point over $C$.

  Let $O$ be the orbit of $(x',y')$ in the algebraic closure $\bar{F}$
  of $F$, under $\Aut(\bar{F}/C)$, that is, automorphisms of the pure
  field.

  For $n \in \N$, let $nV' = \class{v_1\cdot\,\cdots\,\cdot v_n}{v_i
    \in V'}$, and similarly $nO$. By the indecomposability theorem due
  to Chevalley \cite[Chapter II, section 7]{Chevalley51} (see also
  \cite[p261]{Marker02}), there is $n \in \N$ such that $nV' = G$, an
  algebraic subgroup of $TS$. Now $nO \subs nV'$ (where now we
  identify $nV'$ with its $\bar{F}$-points) and $O$ contains all
  realizations of the generic type of $V'$, that is, of $\tp(x/C)$,
  hence $nO$ contains all the realizations of the generic types of
  $nV'$. Every element of $G$ is the product of two generic elements,
  so $2nO = G$.

  The differential form $\eta$ vanishes on $TS(C)$, so
  \[\eta(x',y') = \eta(x,y) - \eta(\gamma_1,\gamma_2) = 0\] 
  but then $\eta$ vanishes on $O$, because $\eta$ is defined over $C$
  and hence its kernel is $\Aut(\bar{F}/C)$-invariant. Since $\eta$ is
  a group homomorphism, it vanishes on the subgroup $G$ generated by
  $O$, that is $G \subs \ker \eta$. Since $\ker \eta$ is a proper
  subgroup of $TS$, $G$ is a proper algebraic subgroup of $TS$. By
  lemma~\ref{subgroups of TS}, $G$ is of the form $J \cross H$, with
  $J$ a subgroup of $LS$ and $H$ a subgroup of $S$.
\end{step}

\begin{step}[Step 3: A subgroup of $S$]
  Recall that $\omega_i(x,y) = \zeta_i(y) - \xi_i(x)$, with the
  $\zeta_i$ being invariant forms on $S$ and the $\xi_i$ being
  invariant forms on $LS$. Let $\nu = \sum_{i=1}^n \alpha_i \zeta_i$
  and $\mu = \sum_{i=1}^n \alpha_i \xi_i$. 

  For any $h \in H$, 
  \[\nu(h) = \nu(h) - \mu(0) = \eta(0,h) = 0\]
  because $(0,h) \in G \subs \ker \eta$. Thus $H \subs \ker \nu$. Now
  $\nu$ is a nonzero invariant form on $S$, since the $\zeta_i$ are
  linearly independent. Hence $H$ is a proper algebraic subgroup of
  $S$.
\end{step}

\begin{step}[Step 4: Constant cosets]
  Consider the quotient group $\Gamma_S/TS(C)$. By axiom U4,
  it is the graph of a bijection 
  \[\ra{\frac{\pr_{LS} \Gamma_S}{LS(C)}}{\theta}{\frac{\pr_S
      \Gamma_S}{S(C)}}\]
  where $\pr_{LS}$ is the projection $\ra{TS}{}{LS}$ and $\pr_S$ is the
  projection $\ra{TS}{}{S}$. By the choice of corresponding bases of
  invariant forms $\zeta_1,\ldots,\zeta_n$ on $S$ and
  $\xi_1,\ldots,\xi_n$ on $LS$, and lemma~\ref{Gamma in coords}, 
  \[\theta^{-1}((\pr_1 \Gamma_S \cap H) \cdot S(C)) = (\pr_2 \Gamma_S
  \cap LH) \cdot LS(C)\]
  By construction of $H$, $y$ lies in a constant coset of $H$, and
  $(x,y) \in \Gamma_S$, so $\theta^{-1}(y \cdot S(C)) = x \cdot LS(C)$,
  hence $x$ lies in a constant coset of $LH$. Thus $(x,y)$ lies in a
  constant coset of $TH$, as required.
\end{step}
\end{proof}

\begin{theorem}[The Schanuel property]\label{Schanuel property}
  Let $F$ be a field of characteristic zero, let $\Delta$ be a
  collection of derivations on $F$, and let $C$ be the intersection of
  their constant fields. Let $S$ be a semiabelian variety defined over
  $C$, of dimension $n$, and let $\Gamma_S \subs LS\cross S$ be the
  solution set of the exponential differential equation of $S$ (that
  is, the intersection of the solution sets for each $D \in \Delta$).

  Suppose that $(x,y) \in \Gamma_S$ and $\td(x,y/C) - \rk\Jac(x,y) <
  n$. Then there is a proper algebraic subgroup $H$ of $S$ and a
  constant point $\gamma \in TS(C)$ such that $(x,y)$ lies in the
  coset $\gamma \cdot TH$.
\end{theorem}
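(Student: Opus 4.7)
The plan is to reduce to Proposition \ref{Main Schanuel prop}, which yields exactly the desired conclusion once we know that the $n$ differentials $\omega_1(x,y),\dots,\omega_n(x,y)$ are $F$-linearly dependent in $\Omega(F/C)$. So the task is to translate the dimension inequality $\td(x,y/C)-\rk\Jac(x,y) < n$ into precisely such a linear dependence.

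Let $W$ be the $F$-subspace of $\Omega(F/C)$ spanned by $da_1,\dots,da_{2n}$, where $a_1,\dots,a_{2n}$ are the coordinate functions of $(x,y)$ under some affine embedding of $LS \cross S$. Standard K\"ahler differential theory in characteristic zero gives $\dim_F W = \td(x,y/C) =: m$. Since the invariant forms $\zeta_i$ on $S$ and $\xi_i$ on $LS$ are regular, the evaluations $\zeta_i(y)$ and $\xi_i(x)$ lie in $W$, and hence so does each $\omega_i(x,y) = \zeta_i(y) - \xi_i(x)$.

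Now consider the $F$-linear evaluation map
\[
\phi : W \longrightarrow F^\Delta, \qquad \phi(\omega) = \bigl(D^{*}\omega\bigr)_{D \in \Delta}.
\]
Since $\phi(da_j) = (Da_j)_{D \in \Delta}$ is the $j$-th column of $\Jac_\Delta(x,y)$, the image of $\phi$ is the column span of the Jacobian, whose $F$-dimension is by definition $\rk\Jac(x,y)$. (The image is finite-dimensional, bounded by $m$, so there is no subtlety when $\Delta$ is infinite: the supremum defining $\rk\Jac_\Delta$ is attained on a sufficiently large finite subfamily.) Consequently
\[
\dim_F \ker\phi \;=\; m - \rk\Jac(x,y) \;=\; \td(x,y/C) - \rk\Jac(x,y) \;<\; n
\]
by hypothesis. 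On the other hand, Lemma \ref{Gamma in coords} applied to each $D \in \Delta$ says precisely that $\omega_i(x,y) \in \ker\phi$ for $i = 1,\dots,n$. Thus $n$ elements lie in a subspace of $F$-dimension strictly less than $n$, forcing an $F$-linear dependence among $\omega_1(x,y),\dots,\omega_n(x,y)$. Proposition \ref{Main Schanuel prop} then supplies the proper algebraic subgroup $H$ of $S$ and the constant point $\gamma \in TS(C)$ with $(x,y) \in \gamma \cdot TH$.

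The real work was done in Proposition \ref{Main Schanuel prop}; the only substantive point here is the identification $\rk\phi = \rk\Jac_\Delta(x,y)$, which amounts to the routine fact that differentiating an algebraic relation among coordinates produces the corresponding $F$-linear relation among columns of the Jacobian. I anticipate no genuine obstacle.
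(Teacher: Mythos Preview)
Your proposal is correct and follows essentially the same line as the paper's own proof: both reduce to Proposition~\ref{Main Schanuel prop} by a rank--nullity argument showing that the $n$ differentials $\omega_i(x,y)$ live in an $F$-subspace of $\Omega(F/C)$ of dimension $\td(x,y/C)-\rk\Jac(x,y)<n$. The paper phrases this via $\Omega(E/C)\otimes_E F$ with $E=C(x,y)$ and a finite subset of $\Delta$ realizing the Jacobian rank, whereas you work directly with the $F$-span $W$ of the $da_j$ inside $\Omega(F/C)$ and the full family $\Delta$; these are equivalent because the natural map $\Omega(E/C)\otimes_E F\to\Omega(F/C)$ is injective in characteristic zero, which is exactly the ``standard K\"ahler differential theory'' you invoke for $\dim_F W=\td(x,y/C)$. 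One minor wording issue: $S$ need not be affine, so ``affine embedding of $LS\times S$'' should be read as working in local coordinates (or any ambient constructible embedding), but this does not affect the argument.
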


\begin{proof}
  To prove the theorem, it suffices by proposition~\ref{Main Schanuel prop} to show that
  the differential forms $\omega_1(x,y),\ldots,\omega_n(x,y)$ are
  $F$-linearly dependent in $\Omega(F/C)$.

  Let $E = C(x,y)$, the subfield (not differential subfield) of $F$
  generated over $C$ by $x$ and $y$. Choose a finite tuple
  $D_1,\ldots,D_r$ of derivations from $\Delta$ such that the rank of
  the Jacobian matrix $\rk \Jac_\Delta (x,y)$ is equal to $\rk
  \Jac_{D_1,\ldots,D_r}(x,y)$. Write $D$ for the tuple
  $(D_1,\ldots,D_r)$, a map $\ra{F}{D}{F^r}$. Consider the diagram
  below, where $D^*$ is the $F$-linear map which comes from the
  universal property of $d$.
  \begin{diagram}
    F && \rTo^{d} && \Omega(F/C)\\
    & \rdTo(4,2)_{D} &&& \dTo>{D^*}\\
    && && F^r
  \end{diagram}
  Write $\Ann(D)$ for the kernel of the linear map $D^*$. The diagram
  restricts to
  \begin{diagram}
    E && \rTo^{d} && \Omega(E/C) \tensor_E F\\
      & \rdTo(4,2)_{D} &&& \dTo>{D^*}\\
      & &&& F^r
  \end{diagram}
  where again $D^*$ is $F$-linear, with kernel $(\Omega(E/C)\tensor_E
  F) \cap \Ann(D)$.

  The $E$-vector space $\Omega(E/C)$ has $E$-linear dimension
  equal to $\td(x,y/C)$, and so $\Omega(E/C) \otimes_E F$ has $F$-linear
  dimension also equal to $\td(x,y/C)$. 

  The image of $D^*$ is the image of $D$, which is spanned by the
  columns of the matrix $\Jac(x,y)$. Thus $\rk \Jac(x,y)$ is equal to
  the rank of the linear map $D^*$, which by the rank-nullity theorem
  is equal to the codimension of its kernel. Thus $\Omega(E/C)
  \tensor_E F \cap \Ann(D)$ has dimension $\td(x,y/C) - \rk\Jac(x,y)$,
  which by assumption is strictly less than $n$.

  The differential forms $\omega_i$ are defined over $C$, so each of
  the $n$ differentials $\omega_i(x,y)$ lies in $\Omega(E/C)$. Since
  $(x,y) \in \Gamma_S$, each $\omega_i(x,y)$ also lies in
  $\Ann(D)$. Hence they are $E$-linearly dependent, and in particular
  they are $F$-linearly dependent.
\end{proof}

\begin{cor}\label{reduct satisfies SP}
  The reduct of a differential field to the language $\L_\Ss$
  satisfies the SP axiom.
\end{cor}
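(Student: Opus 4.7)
The plan is to derive the corollary directly from Theorem~\ref{Schanuel property}, applied with $\Delta$ taken to be the (singleton or larger) collection of derivations of the given differential field $F$. Given $g = (x,y) \in \Gamma_S$ with $\td(g/C) < \dim S + 1$, I need to show the hypothesis $\td(g/C) - \rk\Jac(g) < \dim S$ of that theorem holds, and then quote its conclusion verbatim.

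The natural case split is on whether $\rk\Jac(g)$ is zero or positive. If $\rk\Jac(g) \ge 1$, then from $\td(g/C) \le \dim S$ we get
\[ \td(g/C) - \rk\Jac(g) \le \dim S - 1 < \dim S, \]
so Theorem~\ref{Schanuel property} applies and directly supplies the proper algebraic subgroup $H$ of $S$ and constant point $\gamma \in TS(C)$ demanded by SP. If instead $\rk\Jac(g) = 0$, then $Dg_i = 0$ for every coordinate $g_i$ and every $D \in \Delta$, so $g \in TS(C)$ because $C$ is precisely the intersection of the constant fields. In this branch the conclusion holds trivially by taking $\gamma = g$ and $H$ the trivial subgroup, which is a proper subgroup whenever $\dim S \ge 1$.

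The only degenerate case is $\dim S = 0$, which is either excluded from $\Ss$ or handled by the implicit convention already visible in Lemma~\ref{Smax lemma} (``$\dim S < \td(g/C)$ or $\dim S = 0$''). There is no real obstacle in the argument: the corollary is essentially just a repackaging of Theorem~\ref{Schanuel property} together with the elementary observation that vanishing Jacobian over the full family $\Delta$ forces $g$ into the constant points of $TS$.
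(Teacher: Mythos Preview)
Your proof is correct and follows the same approach as the paper, which simply notes that SP is the special case of Theorem~\ref{Schanuel property} with $\Delta = \{D\}$. Your case split on $\rk\Jac(g)$ makes explicit what the paper's one-line proof leaves implicit; in fact the split is not strictly needed, since $\rk\Jac(g) = 0$ forces $g \in TS(C)$ and hence $\td(g/C) = 0 < \dim S$, so the hypothesis of Theorem~\ref{Schanuel property} is satisfied in that case too and you could invoke the theorem uniformly.
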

\begin{proof}
  The axiom SP is just the special case of theorem~\ref{Schanuel
  property} for the semiabelian varieties which lie in $\Ss$, with
  $\Delta$ being the singleton $\{D\}$.
\end{proof}

\subsection{Existential closedness}

\begin{theorem}\label{reduct satisfies EC}
  Let $F$ be a differentially closed field (of characteristic zero,
  with one derivation). Then the reduct of $F$ to the language
  $\L_\Ss$ has the EC property.
\end{theorem}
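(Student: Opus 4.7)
By proposition~\ref{EC' implies EC} it suffices to establish EC$'$, so fix a perfectly rotund irreducible subvariety $V \subs TS$ and a parametric family $(W_e)_{e \in Q(C)}$ of proper subvarieties of $V$ defined over $C_0$. Let $A$ be a finitely generated algebraically closed subfield of $F$ over which $V$ and the family $(W_e)$ are defined. My overall strategy is to construct, inside some differential field extension of a finitely generated differential subfield of $F$ containing $A$, a point $g_0 = (x_0,y_0)$ of $V \cap \Gamma_S$ that is generic in $V$ over $A$. Once this is done, model-completeness of DCF$_0$ (applied to $F$, which is differentially closed) transfers the existence of such a point to $F$ itself. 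Any such realization $g \in F$ generic in $V$ over $A$ automatically avoids every $W_e$ with $e \in Q(C)$, since the $W_e$ are defined over $C_0(e) \subs A$ and $V$ is irreducible.

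The heart of the proof is the construction of the derivation. Choose a generic point $g_0 = (x_0,y_0)$ of $V$ over $A$ inside some algebraically closed extension $K = A(g_0)^{\alg}$; by genericity $\td(g_0/A) = \dim V = n$. Let $D_0$ be the derivation of $F$ restricted to $A$. Extensions of $D_0$ to $A(g_0)$ are determined by the choice of a single vector $Dg_0 \in T_{g_0}V$, an $n$-dimensional $F$-subspace of $T_{g_0}(TS) \iso F^{2n}$, and any such extension lifts uniquely to $K$. The condition $(x_0,y_0) \in \Gamma_S$ becomes, by lemma~\ref{Gamma in coords}, the system of $n$ $F$-linear equations
\[
  D^*\omega_i(x_0,y_0) = 0 \qquad (i = 1, \dots, n),
\]
where $\omega_i = \zeta_i - \xi_i$ is built from corresponding bases of invariant forms on $S$ and $LS$. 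These equations impose $n$ linear conditions on $Dg_0 \in T_{g_0}V$, so finding a solution is a linear-algebra consistency problem in an $n$-dimensional space.

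The key claim is that perfect rotundity guarantees consistency: the linear map $T_{g_0}V \to F^n$ sending $v$ to the tuple of conditions $(\omega_i(v))_i$ is surjective. The point is that a nontrivial relation in its cokernel would produce, after translating through the pairing with invariant forms, a proper quotient $f \colon S \onto H$ with $(Tf)(V)$ of dimension at most $\dim H$, contradicting the strict inequality in the definition of perfectly rotund (reducing to rotundity once one isolates the largest such quotient along which $V$ projects with deficiency). Having verified this, the chosen $Dg_0$ produces a differential field $(K,D)$ extending $(A,D_0)$ with $g_0 \in V \cap \Gamma_S$ and $\td(g_0/A) = \dim V$.

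To conclude, form the compositum of $(K,D)$ with the differential subfield of $F$ generated by $A$ inside some sufficiently saturated differentially closed field containing both. Then $g_0$ realizes, over $A$, the complete Kolchin-generic type of $V \cap \Gamma_S$ in this extension. Model-completeness of DCF$_0$ (equivalently, existential closedness of differentially closed fields) yields a realization $g \in F$ lying in $V \cap \Gamma_S$ and generic in $V$ over $A$; as noted above, this $g$ avoids all $W_e$. The main obstacle, as outlined, is the linear-algebra step linking perfect rotundity to surjectivity of the $\omega_i$-constraint map; the rest is bookkeeping plus the standard transfer via model-completeness of DCF$_0$.
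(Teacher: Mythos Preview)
Your overall architecture matches the paper's: reduce to EC$'$, pick a generic point $(x,y)$ of $V$, extend the derivation so that $(x,y)\in\Gamma_S$, then transfer back to $F$. The linear-algebra step is also essentially the same idea, though the paper carries it out by working over $F$ rather than a small $A$: with $K=F(x,y)^{\alg}$ one has $\dim\DKD=n+1$, the $\omega_i(x,y)$ are $K$-linearly independent in $\Omega(K/F)$ by the contrapositive of proposition~\ref{Main Schanuel prop}, and their span has codimension~$1$ in $\OKD$, giving a unique $D$ extending $D_0$ with $(x,y)\in\Gamma_S$. Your surjectivity claim is morally this, but you should cite proposition~\ref{Main Schanuel prop} rather than sketch an ad hoc argument.

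There is, however, a genuine gap. The sentence you need to pull back to $F$ is
\[
(\exists g\in TS)(\forall e\in Q(C))\bigl[g\in\Gamma_S\cap V\wedge g\notin W_e\bigr],
\]
which quantifies universally over the constant field. For this to hold in your extension you must know that the constant field of the extension is still $C$, not something larger. Your construction gives no control over this: extending $D_0$ from $A$ (or even from $F$) to $K$ so that $(x,y)\in\Gamma_S$ could in principle create new constants, and then genericity of $(x,y)$ over $A$ no longer guarantees avoidance of $W_e$ for $e$ ranging over the enlarged constant field. The paper devotes a full paragraph to this: it passes to the differential closure $K'$, uses that $F\preccurlyeq K'$ in the differential language to deduce $F\strong K'$ in the $\L_\Ss$-sense, and then uses perfect rotundity a \emph{second} time (the strict inequality $\dim H>\dim(V\cap\gamma\cdot TH)$ for proper $H$) to force $C_K=C$. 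This is not bookkeeping; it is where perfect rotundity, as opposed to mere rotundity, is genuinely needed.

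A secondary issue: working over a finitely generated $A\subs F$ and then forming a compositum with $F$ inside a saturated model is delicate, since you must ensure the amalgamation keeps $(x,y)$ generic over $F$ and does not itself introduce constants. The paper sidesteps this entirely by taking the generic point over $F$ from the start, so that $F$ already sits inside $K$ and the transfer is via $F\preccurlyeq K'$.
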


\begin{proof}
  Let $S \in \Ss$, let $n = \dim S$, and let $V$ be a perfectly rotund
  subvariety of $TS$, defined over $F$. Let $(W_e)_{e \in Q(C)}$ be a parametric family of proper subvarieties of $V$, defined over $C_0$. We show there is $(x,y) \in \Gamma_S \cap V \minus \bigcup_{e \in Q(C)} W_e$. By proposition~\ref{EC' implies EC}, this suffices to prove the EC property.

  Let $D_0$ be the derivation on $F$. Let $(x,y)$ be a generic point
  of $V$ over $F$, and let $K = F(x,y)^{\alg}$, the algebraic closure of $F(x,y)$.

  We wish to consider the derivations in $\Der(K/C)$ which extend $D_0$
  on $F$. These form a coset of the subspace $\Der(K/F)$ of
  $\Der(K/C)$. In order to work with subspaces rather than cosets, we follow
  \cite{Pierce03} in defining
  \[\DKD = \class{D \in \Der(K/C)}{\exists \lambda \in K,
    D\restrict{F} = \lambda D_0}\]
  which can be considered as the dual space of a quotient $\OKD$ of
  $\Omega(K/C)$. This gives a sequence of inclusions
  \begin{diagram}
    \Der(K/F) & \rInto && \DKD & \rInto && \Der(K/C)
  \end{diagram}
  and dually surjections
  \begin{diagram} 
    \Omega(K/C) & \rOnto && \OKD & \rOnto && \Omega(K/F)
  \end{diagram}
  of $K$-vector spaces.
  
  We can consider the differentials $\omega_i(x,y)$ in $\Omega(K/C)$,
  and also in $\OKD$ and $\Omega(K/F)$ via the canonical surjections
  above. By the rotundity of $V$ and the genericity of $(x,y)$ in $V$
  over $F$, $(x,y)$ does not lie in an $F$-coset of $TH$ for any
  proper algebraic subgroup $H$ of $S$. Hence, by the contrapositive
  of proposition~\ref{Main Schanuel prop}, the differentials
  $\omega_1(x,y), \ldots, \omega_n(x,y)$ are $K$-linearly independent
  in $\Omega(K/F)$, and hence also in $\OKD$ and $\Omega(K/C)$.
  
  The $K$-linear dimension of $\OKD$ is equal to that of $\DKD$, which
  is $\dim \Der(K/F) + 1$, the ``+1'' because $F \neq C$. As $V$ is
  perfectly rotund it has dimension $n$ and, because $(x,y)$ is a
  generic point of $V$ over $F$ and $K = F(x,y)$, we have $\dim
  \Der(K/F) = n$.

  Let $\Lambda = \tuple{\omega_1(x,y),\ldots,\omega_n(x,y)}$ be the
  span of the $\omega_i(x,y)$ in $\Omega(K/C)$, with annihilator
  $\Ann(\Lambda) \subs \Der(K/C)$. The image of $\Lambda$ has
  codimension 1 in $\Omega(K/D_0)$, so $\Der(K/D_0) \cap
  \Ann(\Lambda)$ has dimension 1.  Let $D \in \Der(K/D_0) \cap
  \Ann(\Lambda)$ be nonzero. The image of $\Lambda$ spans
  $\Omega(K/F)$, so $\Der(K/F) \cap \Ann(\Lambda) = \{0\}$.  Hence
  $D\restrict{F} = \lambda D_0$ for some non-zero $\lambda$.
  Replacing $D$ by $\lambda^{-1}D$, we may assume that $\lambda=1$,
  that is, $D$ extends $D_0$. Indeed, we have shown that this $D$ is the unique derivation on $K$ extending $D_0$ such that $(x,y) \in \Gamma_S$ with respect to $D$.

  Let $K'$ be the differential closure of $\tuple{K;D}$, and let $C_K$ be the field of constants in $K'$. Since $K$ is algebraically closed, $C_K \subs K$. We must show that $C_K = C$. Let $F'$ be the algebraic closure in $K'$ of $C_K \cup F$. Now $F \subs K'$ is an inclusion of differentially closed fields, and the theory $\mathrm{DCF_0}$ has quantifier elimination, so the inclusion is an elementary inclusion. Thus it preserves all formulas in the differential field language, and in particular all existential formulas in the language $\L_\Ss$. It follows that it is a strong embedding when considered as an embedding of the reducts to the language $\L_\Ss$. So $F \strong K'$, and hence $F \strong K$. Furthermore, $F' \strong K$ since $F'$ is obtained from $F$ just by adding new constants. Thus $\delta(x,y/F') \ge 0$. Let $H$ be the smallest algebraic subgroup such that $(x,y)$ lies in a $C_K$-coset of $TH$, say $\gamma\cdot TH$. Then
\[\dim H \le \td(x,y/F') \le \dim (V \cap \gamma \cdot TH)\]
because $F' \strong K$, and because $(x,y) \in V \cap \gamma \cdot TH$ which is defined over $F'$. But $V$ is perfectly rotund, so $\dim H > \dim (V \cap \gamma \cdot TH)$ unless $H = S$. Thus $\td(x,y/F') = n = \td(x,y/F)$, so $F = F'$ and $C_K = C$.

Now $(x,y)$ is generic in $V$ over $C$, which means that it does not lie in any proper subvariety of $V$ defined over $C$. Thus we have 
  \[K \models (\exists(x,y) \in TS)(\forall e \in Q(C))[(x,y) \in \Gamma_S \cap V \wedge (x,y) \notin W_e]\]
 and this sentence remains true in $K'$ because there are no new constants. Since $F$ is an elementary substructure of $K'$, it also satisfies the same sentence.
  Thus $F$ satisfies the EC property.
\end{proof}

We can now give criteria for a system of exponential differential equations to have a
solution in some differential field. The Schanuel property can be
viewed as a necessary condition for a system of differential equations
to have a solution, and the EC property gives a matching sufficient
condition.

  Let $F$ be a differentially closed field, let $S$ be a semiabelian variety defined over the constant subfield $C$, and let $V$ be a subvariety of $TS$. Firstly, we replace $V \subs TS$ by a homomorphic
  image $V' \subs TS'$ which is free, with $\loc_C V'$ absolutely free. If $V'$ is defined over $C$ then a necessary and sufficient condition for there to be a nonconstant point in
  $\Gamma_{S'} \cap V'$ in $F$ is for $V'$ to be strongly rotund.

  If $V'$ is not defined over $C$ then a sufficient condition for a
  point to exist is for $V'$ to be rotund. If in addition $\loc_C
  V'$ is strongly rotund then a nonconstant point exists. Any such point gives rise to a point in $\Gamma_S \cap V$ by taking an inverse image under the quotient map.

The reduct of a differentially closed field does not have quantifier
elimination in the language $\L_\Ss$, so there is no general necessary
and sufficient condition when $V$ is defined with non-constant
parameters. The theory $\mathrm{DCF}_0$ does have quantifier
elimination, so there must be a condition which depends on what other
differential equations the parameters satisfy.


\section{The first order theory}\label{fot section}

\subsection{The uniform Schanuel property}

The compactness theorem of first order model theory can be combined
with the Schanuel property to give a \emph{uniform} Schanuel
property. 

The algebraic subgroups of $\ga^n$ are uniformly definable by formulas
of the form $Mx = 0$, where $M$ ranges over the definable set of
matrices $\Mat_{n\cross n}$. In other words, the algebraic subgroups
form a parametric family in the sense of definition~\ref{parametric family}. However, for all other commutative algebraic groups the set of all algebraic subgroups is not uniformly definable, and for semiabelian varieties there are no infinite parametric families of algebraic subgroups at all. This lack of uniform definability in fact works in our favour.

We use the fibre condition of algebraic geometry, from \cite[page
77]{Shafarevich1}.
\begin{lemma}[Fibre Condition]\label{fibre condition}
  Let $(V_p)_{p \in P}$ be a family of algebraic varieties,
  parametrized over a constructible set $P$. Then for each $k \in \N$,
  the set of fibres $\class{p \in P}{\dim V_p \ge k}$ is a subvariety of $P$ and
  the set $\class{p \in P}{\dim V_p = k}$ is constructible. \qed
\end{lemma}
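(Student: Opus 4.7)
The plan is to reduce the statement to the standard upper semicontinuity of fibre dimension for a dominant morphism of irreducible varieties, which is the result cited from Shafarevich.

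First I would reduce to the irreducible case. Decompose the Zariski-closed subset $V \subs X \cross P$ into its finitely many irreducible components $V_1,\ldots,V_m$; since $\dim V_p = \max_i \dim (V_i)_p$, we have
\[\class{p \in P}{\dim V_p \ge k} = \bigcup_{i=1}^m \class{p \in P}{\dim (V_i)_p \ge k},\]
so if each piece on the right is relatively closed in $P$ then so is the union, and likewise for constructibility. By definition of constructible, $P$ itself is a finite disjoint union of locally closed subvarieties of some ambient variety; it suffices to verify the statement on each piece. So we may assume that $P$ is an irreducible variety and $V$ is an irreducible closed subvariety of $X \cross P$.

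Next I would apply the classical fibre dimension theorem to the projection $\pi : V \to P$. By Chevalley's theorem on images of constructible sets, $\pi(V)$ is a constructible dense subset of its closure $Q \subs P$; on $P \minus Q$ the fibre is empty, hence that part of the locus is trivially handled. So I reduce to the case where $\pi : V \to Q$ is dominant with $V,Q$ irreducible. The fibre dimension theorem then says that every nonempty fibre has dimension at least $d := \dim V - \dim Q$, and $\class{q \in Q}{\dim V_q \ge d+1}$ is contained in a proper closed subvariety $Q' \subsetneq Q$. Noetherian induction on $\dim Q$ applied to $\pi^{-1}(Q') \to Q'$ shows that $\class{q \in Q}{\dim V_q \ge k}$ is Zariski-closed for every $k \in \N$.

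Having established that each set $\class{p \in P}{\dim V_p \ge k}$ is a subvariety of $P$, the second claim is immediate:
\[\class{p \in P}{\dim V_p = k} = \class{p \in P}{\dim V_p \ge k} \minus \class{p \in P}{\dim V_p \ge k+1}\]
is the difference of two closed subsets of $P$, hence locally closed, hence constructible. The only nontrivial ingredient is the upper semicontinuity of fibre dimension itself, which is precisely the classical theorem cited, so in the present context there is no genuine obstacle beyond invoking that reference and organising the reductions above.
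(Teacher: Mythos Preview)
The paper does not prove this lemma at all: it is stated with a \qed\ and a citation to Shafarevich, and is treated throughout as a black-box input from algebraic geometry. Your sketch is a correct unpacking of the standard argument behind that citation --- reduce to irreducible $V$ and $P$, then combine Chevalley's theorem with the upper-semicontinuity of fibre dimension and Noetherian induction --- so there is nothing to compare on the level of strategy.

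One small point worth noting for precision: as literally stated, closedness of $\class{p\in P}{\dim V_p \ge k}$ can fail for $k=0$ under the usual convention $\dim\emptyset = -\infty$, since that set is exactly the image $\pi(V)$, which is only constructible in general. Your argument implicitly handles $k\ge 1$ (where the locus sits inside the closed set $Q = \overline{\pi(V)}$ and Noetherian induction applies), and the $k=0$ case still gives a constructible set by Chevalley. This is harmless for the paper's purposes, since every application of the lemma (in the uniform Schanuel property and in the definability of rotundity) only needs first-order definability, i.e.\ constructibility.
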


A similar result holds for the rank of the Jacobian matrix in a
differential field with finitely many commuting derivations. Indeed,
upon close examination the main part of the proof of the fibre condition
is more or less this result.
\begin{lemma}\label{rk Jac is definable}
  For each algebraic variety $V$ and for each natural number $k$, the
  set $\class{x \in V}{\rk \Jac(x) \le k}$ is positively definable in
  the language of differential fields, and the set $\class{x \in
    V}{\rk \Jac(x) = k}$ is definable.
\end{lemma}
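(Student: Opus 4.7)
The plan is to reduce to the affine case and then express the rank condition by explicit vanishing of minors. Since the rank of the Jacobian is preserved under coordinate changes (the transition maps act by invertible block matrices on the Jacobian), we may cover $V$ by affine charts and work chart by chart, so we may assume $V \subs \mathbb{A}^N$ and $x = (x_1,\ldots,x_N)$. The Jacobian matrix $\Jac_\Delta(x)$ then has entries $D_i x_j$, which are elements of the differential field, and each such entry is a term in the language of differential fields.

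Next, I would use the standard linear-algebra characterization that $\rk M \le k$ iff every $(k+1) \times (k+1)$ minor of $M$ vanishes. For finite $\Delta$ each such minor is a concrete polynomial expression in the $D_i x_j$, hence an atomic term giving an equation, so the conjunction of all these equations (together with the polynomials cutting out $V$) is a quantifier-free positive formula defining $\{x \in V : \rk \Jac(x) \le k\}$. The number of minors to check is bounded by $\binom{|\Delta|}{k+1}\binom{N}{k+1}$, which is finite. For $\rk \Jac(x) = k$, one then conjoins the positive formula for $\rk \Jac(x) \le k$ with the negation of the one for $\rk \Jac(x) \le k-1$; the latter is a finite disjunction of inequations asserting that some $k \times k$ minor is nonzero, so the result is definable (but no longer positively, since it requires a disequation).

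If $\Delta$ is infinite, the same argument works once one observes that the rank of $\Jac_\Delta(x)$ on $V \subs \mathbb{A}^N$ is bounded by $N$, so the maximum is attained on some finite subset $\Delta' \subs \Delta$. The condition $\rk \Jac(x) \le k$ is then the (possibly infinite) conjunction over all finite $\Delta'$ of the quantifier-free positive formula above; since there is a uniform bound on minor size, in a fixed countable language this gives a positive $\L_{\omega_1,\omega}$-formula, but within the first-order language with a finite tuple of derivation symbols the conjunction is finite.

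The main potential obstacle is merely bookkeeping: verifying that the rank is well-defined on the overlaps of affine charts (which follows from the chain rule, since a change of chart multiplies the Jacobian by an invertible matrix over the local ring at $x$ and hence preserves rank), and being careful that in the non-affine case the positive definability is local and patches together into a positive definition on $V$. Neither step is deep; the content of the lemma is the elementary translation of a rank condition into vanishing of minors.
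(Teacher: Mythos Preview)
Your proposal is correct and follows essentially the same route as the paper: reduce to affine charts, characterize $\rk \Jac(x) \le k$ by the vanishing of all $(k+1)\times(k+1)$ minors, observe that the entries $D_i x_j$ are terms in the differential field language so this is a finite positive conjunction, and obtain the equality case by conjoining with a negation. You supply more justification than the paper does (the chain-rule remark for chart overlaps and the discussion of infinite $\Delta$), but the argument is the same.
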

\begin{proof}
  $V$ is made up of finitely many affine charts, so it is enough to
  consider $V$ to be affine. For each $x$ the Jacobian $\Jac(x)$ is an
  $r \times n$ matrix. Its rank is the largest $k$ such that there is
  a $k \times k$ minor matrix with non-zero determinant. Thus $\rk
  \Jac(x) \le k$ iff $\det M = 0$ for every minor matrix $M$ of size
  $k+1$. The determinant is a polynomial and there are only finitely
  many minors, so this finite conjunction of equations is a positive
  first order condition on a matrix in the field language. The entries
  in the Jacobian are terms in the differential field language, and so
  we have positive definability of $\rk \Jac(x) \le k$. The second
  part follows.
\end{proof}

\begin{theorem}[Uniform Schanuel property]\label{semiabelian USC}
  Let $F$ be a differential field of characteristic zero, with
  finitely many commuting derivations. Let $S$ be a semiabelian
  variety of dimension $n$, defined over the constant subfield $C$ of
  $F$. For each parametric family $(V_c)_{c \in P(C)}$ of subvarieties
  of $TS$, with $V_c$ defined over $\Q(c)$, there is a finite set
  $\HV$ of proper algebraic subgroups of $S$ such that for each $c \in
  P(C)$ and each $(x,y) \in \Gamma_S \cap V_c$, if $\dim V_c -
  \rk\Jac(x,y) = n - t$ with $t>0$, then there is $\gamma \in TS(C)$
  and $H \in \HV$ of codimension at least $t$ in $S$ such that $(x,y)$
  lies in the coset $\gamma \cdot TH$.
\end{theorem}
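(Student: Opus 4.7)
The plan is to derive the statement from the Schanuel property (Theorem~\ref{Schanuel property}) by a compactness argument in $\Th(F)$, combined with a minimization of the subgroup produced by Schanuel.

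First I would observe that all ingredients of the statement are first-order in the language of differential fields with the constant predicate $C$ and parameters from $C_0$: membership in $\Gamma_S$ and in $V_c$ as $c$ varies in $P$, the conditions $\rk\Jac(x,y)=k$ (Lemma~\ref{rk Jac is definable}) and $\dim V_c = m$ (Lemma~\ref{fibre condition}), and, for each \emph{fixed} proper algebraic subgroup $H$ of $S$, the condition $(\exists \gamma \in TS(C))\,(x,y)\in \gamma\cdot TH$. Since a semiabelian variety has only countably many algebraic subgroups, enumerate the proper ones as $H_1,H_2,\dots$. Note that $k$ and $m$ are bounded (by $2n$), hence take only finitely many values; since a finite union of finite sets is finite, it suffices to fix a pair $(k,m)$ with $t := n - m + k > 0$ and produce the required finite family $\HV$ for tuples with these invariants.

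Suppose for contradiction that no finite family works for this fixed $(k,m)$. Form the partial type $\Sigma(u,c)$ over $\emptyset$ asserting $u \in \Gamma_S \cap V_c$, $\rk\Jac(u)=k$, $\dim V_c = m$, and, for every $H_i$ of codimension at least $t$, $\neg(\exists \gamma \in TS(C))\,u\in \gamma\cdot TH_i$. By hypothesis every finite subset of $\Sigma$ is realized in $F$, so $\Sigma$ is consistent with $\Th(F)$ and is realized by some $(u^*,c^*)$ in an elementary extension $F^*\succeq F$, whose constant field I denote $C^*$. Note that $F^*$ remains a differential field with the same family of commuting derivations, so Theorem~\ref{Schanuel property} applies to it.

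Now I apply Schanuel in $F^*$ to $u^* = (x^*,y^*)$. Since $\td(x^*,y^*/C^*) - \rk\Jac(x^*,y^*) \le m - k = n - t < n$, Theorem~\ref{Schanuel property} yields a proper algebraic subgroup $H^*$ of $S$ and $\gamma^*\in TS(C^*)$ with $(x^*,y^*)\in \gamma^*\cdot TH^*$; choose such an $H^*$ of \emph{minimal} dimension. The key step, which I expect to be the main obstacle, is to show that this forces $\dim H^* \le n - t$. Setting $g := (x^*,y^*)\cdot\gamma^{*-1} \in TH^*\cap\Gamma_S = \Gamma_{H^*}$, if $\dim H^* > \td(g/C^*) - \rk\Jac(g)$ then Schanuel applied inside the semiabelian variety $H^*$ would produce a proper subgroup $H'\subsetneq H^*$ and $\gamma'\in TH^*(C^*)$ with $g\in\gamma'\cdot TH'$, so $(x^*,y^*)\in(\gamma^*\gamma')\cdot TH'$, contradicting the minimality of $H^*$. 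Hence $\dim H^* \le \td(x^*,y^*/C^*) - \rk\Jac(x^*,y^*) \le m - k = n - t$, i.e.\ $\codim H^* \ge t$. But then $H^*=H_{i_0}$ for some $i_0$ with $\codim H_{i_0}\ge t$, and $(x^*,y^*)\in\gamma^*\cdot TH_{i_0}$ directly contradicts the corresponding clause of $\Sigma$. This contradicts the assumption that no finite family suffices for the pair $(k,m)$, so the theorem follows by taking the union of the finite families obtained for each admissible $(k,m)$.
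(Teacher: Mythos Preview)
Your proof is correct and follows the same compactness-from-Schanuel approach as the paper. The paper's argument is terser: it observes that the set $\Phi_V$ of tuples with $\dim V_c - \rk\Jac(x,y) = n - t$ is definable, notes that the countably many formulas asserting $(x,y)\notin\gamma\cdot TH$ for each proper $H$ of codimension $\ge t$ form a set which is ``unsatisfiable by the Schanuel property'', and applies compactness directly. What the paper leaves implicit is precisely the point you spell out --- why the subgroup produced by Theorem~\ref{Schanuel property} may be taken of codimension at least $t$. Your minimization step (iterating Schanuel inside the smallest witnessing $H^*$, using that $g=(x^*,y^*)\cdot\gamma^{*-1}\in\Gamma_{H^*}$ with the same value of $\td-\rk\Jac$, until $\dim H^*\le n-t$) is exactly the argument needed to justify that claim. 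One cosmetic point: $H^*$ need not be connected, so strictly you apply Theorem~\ref{Schanuel property} to its identity component; this changes nothing, since the resulting smaller coset still has a $C^*$-rational representative.
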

\begin{proof}
  The set
  \[\Phi_V = \class{((x,y),c) \in \Gamma_S \cross
    P(C)}{(x,y) \in V_c, \dim V_c - \rk\Jac(x,y) = n-t}\] is definable
  using lemmas~\ref{fibre condition} and \ref{rk Jac is definable}.
  The set of formulas
  \[((x,y),c) \in \Phi_V \wedge (\exists \gamma \in TS(C))[(x,y) \in
  \gamma \cdot TH]\]
  where $H$ ranges over all proper algebraic subgroups of $S$ of
  codimension at least $t$ is countable (as there are only countably
  many proper algebraic subgroups of $S$); in particular it is of
  bounded size. It is unsatisfiable by the Schanuel property, so by
  the compactness theorem some finite subset of it is unsatisfiable.
  This gives the finite set $\HV$.
\end{proof}
For definiteness, we choose $\HV$ to be a particular minimal finite
set of subgroups for each variety $V$. The compactness method gives no
information about the nature of $\HV$, beyond it being finite.

\begin{cor}\label{SP is first order}
  The SP axiom can be written as a first order axiom scheme in the
  language $\L_\Ss$.
\end{cor}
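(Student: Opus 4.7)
The plan is to recast SP in first-order terms by using the uniform Schanuel property (theorem~\ref{semiabelian USC}) to discharge the existential quantification over proper algebraic subgroups on a family-by-family basis.

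First, I would replace the transcendence-degree hypothesis ``$\td(g/C) < \dim S + 1$'' by algebraic containment in a parametric family. Every subvariety of $TS$ of dimension at most $n := \dim S$ defined over $C$ appears as a fibre $V_c$ of some parametric family $(V_c)_{c \in P}$ defined over $C_0$: fix bounds on the number and degree of defining equations, and as these bounds vary one obtains a countable list of such families which together cover every relevant subvariety. Using lemma~\ref{fibre condition}, I would cut each parameter space $P$ down to the constructible subset $\class{c \in P}{\dim V_c \le n}$, so that every family considered has a uniformly bounded fibre dimension.

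For each such family $(V_c)_{c \in P}$, I would invoke theorem~\ref{semiabelian USC} (in the single-derivation setting) to produce a finite set $\HV$ of proper algebraic subgroups of $S$ which uniformly witnesses the Schanuel conclusion across the family. After adjoining the trivial subgroup to $\HV$ to absorb the edge case of $g \in TS(C)$ (where the Jacobian-rank hypothesis of USC is not met, but SP holds trivially with $H$ trivial and $\gamma = g$), the result is the first-order $\L_\Ss$-sentence
\[
\sigma_{(V_c),\HV}\ :\ \forall c \in P \cap C\ \forall g \in \Gamma_S \cap V_c\ \bigvee_{H \in \HV}\,\exists \gamma \in TS(C)\ [g \in \gamma \cdot TH].
\]
The proposed first-order scheme for SP is the collection of all such $\sigma_{(V_c),\HV}$ as $(V_c)$ ranges over the countable list of parametric families.

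That this scheme implies SP modulo U1--U7 is then immediate: given $g \in \Gamma_S$ with $\td(g/C) \le n$, take $V = \loc_C(g)$, locate it as $V_c$ for some $c \in C$ in one of the families, and invoke the matching instance of $\sigma_{(V_c),\HV}$. The converse direction, and hence full equivalence with SP, is exactly what USC is engineered to deliver. The main technical obstacle is the careful matching between USC's Jacobian-rank hypothesis and SP's purely algebraic hypothesis, together with the combinatorial bookkeeping of the parametric families; once that is handled, the first-order character of the scheme is manifest and the corollary follows.
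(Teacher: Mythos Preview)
Your proposal is correct and follows essentially the same route as the paper: both reduce the transcendence-degree hypothesis to membership in a parametric family of bounded-dimension subvarieties, invoke the uniform Schanuel property (theorem~\ref{semiabelian USC}) to extract a finite set $\HV$ of subgroups per family, and then write down the resulting first-order sentence. The only cosmetic differences are that the paper leaves the dimension bound as an antecedent inside the axiom rather than cutting down $P$ in advance, and phrases the conclusion as $q_H(g) \in T(S/H)(C)$ rather than $\exists \gamma \in TS(C)\,[g \in \gamma\cdot TH]$; your extra care with the $g \in TS(C)$ edge case is harmless but in fact unnecessary, since either formulation of the conclusion is automatically satisfied when $g$ is a constant point.
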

\begin{proof}
  For each variety $P$ and each parametric family $(V_p)_{p \in P}$ of
  algebraic subvarieties of $TS$, defined over $\Q$, take the axiom
 
 \[(\forall p \in P(C))(\forall g \in \Gamma_S \cap V_p)\left[\dim V_p
 \le \dim S \to \bigvee_{H \in \HV} q_H(g) \in
 T(S/H)(C)\right]\]
 where $\HV$ is the finite set of algebraic subgroups of $S$ given by
 theorem~\ref{semiabelian USC} and $q_H$ is the quotient map
 $\ra{TS}{q_H}{T(S/H)}$.
\end{proof}

\subsection{The Weak CIT}

We next give a purely algebraic result about the intersection of
subvarieties and algebraic subgroups of a semiabelian variety. 
 The proof here is in essence the same as the proof of Zilber,
but simplified by using the full Schanuel property for partial
differential fields rather than just ordinary differential fields, and
by separating off the statement and proof of the uniform Schanuel
property. 

\begin{defn}
  Let $U$ be a smooth irreducible algebraic variety, and let $V,W$ be
  subvarieties of $U$, with $V \cap W \neq \emptyset$. The
  intersection $V \cap W$ is said to be \emph{typical} (in $U$) iff
  \[\dim(V \cap W) = \dim V + \dim W - \dim U\]
  and \emph{atypical} iff
  \[\dim(V \cap W) > \dim V + \dim W - \dim U.\]
  Even if $V$ and $W$ are irreducible, the intersection $V \cap W$ may
  be reducible, and its components may have different dimensions. We
  say that a component $X$ of $V\cap W$ is \emph{atypical} iff 
  \[\dim X > \dim V + \dim W - \dim U.\]
  We also say that the \emph{degree of atypicality} is the difference
  \[\dim X - (\dim V + \dim W - \dim U).\]
\end{defn}
Note that the intersection is typical iff $\codim(V\cap W) = \codim V
+ \codim W$, and since $U$ is smooth the dimension of the intersection
cannot be less than the typical size (assuming the intersection is
nonempty).

\begin{theorem}[``Weak CIT'' for semiabelian varieties]\label{weak
    CIT} Let $S$ be a semiabelian variety defined over an
  algebraically closed field $C$ of characteristic zero. Let $(U_p)_{p
    \in P}$ be a parametric family of algebraic subvarieties of $S$.
  There is a finite family $\Jf{U}$ of proper algebraic subgroups of
  $S$ such that, for any coset $\kappa = a \cdot H$ of any algebraic
  subgroup $H$ of $S$ and any $p \in P(C)$, if $X$ is an atypical
  component of $U_p \cap \kappa$ with degree of atypicality $t$, then
  there is $J \in \Jf{U}$ of codimension at least $t$ and $s \in S(C)$
  such that $X \subs s \cdot J$.  

  Furthermore, we may assume that $X$ is a typical component of the
  intersection $(U_p \cap s \cdot J) \cap (\kappa \cap s \cdot J)$ in
  $s \cdot J$.
\end{theorem}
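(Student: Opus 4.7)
The plan is to derive the Weak CIT from the Uniform Schanuel Property (Theorem~\ref{semiabelian USC}), applied to a parametric family of subvarieties of $TS$ that encodes the tangent data of a coset $\kappa = aH$ via the linear subspace $LH \subs LS$. Since there is no parametric family of all algebraic subgroups of $S$, the trick is to parametrize instead by the Grassmannian, which is finite-dimensional. For each $k \in \{0, 1, \ldots, n-1\}$, form the family $V^{(k)}_{p, L} := L \times U_p$ of subvarieties of $TS = LS \times S$, parametrized by $(p, L) \in P \times \mathrm{Gr}_k(LS)$; applying Theorem~\ref{semiabelian USC} to each yields a finite set $\HV^{(k)}$ of proper algebraic subgroups of $S$, and one sets $\Jf{U} := \bigcup_{k=0}^{n-1} \HV^{(k)}$.

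Given an atypical component $X$ of $U_p \cap \kappa$ with $\kappa = aH$, $k := \dim H$ and atypicality degree $t > 0$, I would set up the following partial differential field: let $y$ be a generic point of $X$ over $C(a, p)$ and equip $C(a, p, y)^{\alg}$ with a basis $\Delta$ of $\Der(C(a,p,y)^{\alg}/C(a,p)^{\alg})$, so that its constant field equals $C(a,p)^{\alg}$; embed into a differentially closed field $F$ preserving the constants, and pick $x \in LS(F)$ with $(x, y) \in \Gamma_S$. Because $a$ lies in the constant field and $y \in aH$, the element $y a^{-1}$ lies in $H$, hence $\logd_S(y) \in LH$ for every $D \in \Delta$; therefore $Dx = \logd_S(y) \in LH$, placing $x$ in some constant coset of $LH$. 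Subtracting that constant (which preserves membership in $\Gamma_S$ by axiom~U3) arranges $x \in LH$, so $(x, y) \in V^{(k)}_{p, LH}$.

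The crucial rank computation runs as follows. Since $\Delta$ is a basis over the constants and $y$ is smooth generic on $X$, the map $D \mapsto Dy$ identifies $\Delta$ with $T_y X$, giving $\rk \Jac(y) = \dim X$. Writing the defining relation $\logd_{LS}(x) = \logd_S(y)$ in corresponding bases $(\xi_i)$, $(\zeta_i)$ of invariant differential forms, and expanding each $D^* \xi_i(x)$ and $D^* \zeta_i(y)$ via the change-of-basis matrices from invariant forms to coordinate differentials (which are invertible at smooth points), yields $Dx = M(x, y) \, Dy$ with $M(x, y) \in \mathrm{GL}_n(F)$. Thus $\rk \Jac(x, y) = \rk \Jac(y) = \dim X$. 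Combined with $\dim V^{(k)}_{p, LH} = k + \dim U_p$ and the atypicality identity $\dim X = \dim U_p + k - n + t$, one has
\[\dim V^{(k)}_{p, LH} - \rk \Jac(x, y) = n - t,\]
so Theorem~\ref{semiabelian USC} supplies $J \in \HV^{(k)} \subs \Jf{U}$ of codimension at least $t$ together with $\gamma \in TS$ over the constant field such that $(x, y) \in \gamma \cdot TJ$. Projecting to $S$ gives $y \in sJ$, and the genericity of $y$ in $X$ propagates this to $X \subs sJ$; since $C$ is algebraically closed, the coset $sJ$ contains a $C$-point, which we relabel as $s$.

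The furthermore clause is obtained by choosing $J$ of maximum codimension in $\Jf{U}$ with $X \subs sJ$ and observing that, were $X$ still atypical inside $sJ$, a further application of the argument inside $sJ$ would supply a smaller subgroup in $\Jf{U}$, contradicting maximality. The principal technical obstacle is the rank calculation $\rk \Jac(x, y) = \dim X$, which rests on the careful choice of $\Delta$ as a basis of derivations whose constant field is exactly $C(a, p)^{\alg}$ and on the invertibility of the cotangent change-of-basis matrix that transfers the Jacobian rank bound from $y$ to the pair $(x, y)$.
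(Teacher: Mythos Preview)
Your argument is correct and follows essentially the same route as the paper: apply the Uniform Schanuel Property to the parametric family $(\text{linear piece in }LS)\times U_p\subs TS$, take $y$ generic in $X$ with a basis of derivations over $C$, lift to $(x,y)\in\Gamma_S$, and compute $\dim V-\rk\Jac(x,y)=n-t$. The only differences are cosmetic --- you parametrize the linear piece by Grassmannians $\mathrm{Gr}_k(LS)$ and translate $x$ into $LH$, while the paper uses the single family of affine subspaces $\Lambda_{Ma}=\{x\in LS:Mx=a\}$ --- and in the furthermore clause your maximality phrasing needs the same inductive enlargement of $\Jf{U}$ that the paper performs, since applying the argument inside $J$ yields a subgroup from a \emph{new} finite set (for the translated family inside $J$), not from the original $\Jf{U}$.
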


The weak CIT is a simple corollary of the uniform Schanuel property,
but as well as the fact that there are no parametric families of
subgroups of a semiabelian variety, we use the fact that the subgroups
of a vector group \emph{do} form a parametric family.
\begin{proof}
  Let $n = \dim S$ and define $\Lambda_{Ma} = \class{x \in LS}{Mx = a}$
  where $M$ is an $n\cross n$ matrix and $a \in LS$. So $\Lambda$ is the
  parametric family of all cosets of algebraic subgroups of $LS$.

  Suppose that $X$ is an atypical component of $U_p \cap \kappa$ with
  \[r = \dim X = (\dim U_p + \dim \kappa - \dim S) + t.\]

  Let $y$ be generic in $X$ over $C$ and let $D_1,\ldots,D_r$ be a
  basis of $\Der(C(y)/C)$. Then $\rk\Jac(y) = r$. Take $x \in LS(F)$
  with $F$ some differential field extension such that $(x,y) \in
  \Gamma_S$. Then $\rk\Jac(x,y) = \rk\Jac(y)$. Now $y \in \kappa$, a
  constant coset of the algebraic subgroup $H$ of $S$, so, by axiom U4
  (see also step 4 of the proof of proposition~\ref{Main Schanuel
  prop}), $x$ lies in a constant coset of $LH$. Thus $x$ lies in
  $\Lambda_{Ma}$ for a suitable choice of $M \in \Mat_{n\cross n}(C)$
  and $a \in LS(C)$, with $\dim \Lambda_{Ma} = \dim \kappa$. Let
  $V_{Ma,p} = \Lambda_{Ma} \cross U_p$. Then $(x,y) \in \Gamma_S \cap
  V_{Ma,p}$ and
  \[\dim V_{Ma,p} - \rk\Jac(x,y) = \dim \kappa + \dim U_p - \dim X = \dim
  S - t\] and so by theorem~\ref{semiabelian USC}, there is $s \in
  S(C)$ and an algebraic subgroup $J$ of $S$ of codimension at least
  $t$ from the finite set $\HV$ such that $y \in s \cdot J$. Since $y$
  is generic in $X$ over $C$ and $s \cdot J$ is defined over $C$, we
  have $X \subs s \cdot J$. Thus, in the notation of
  theorem~\ref{semiabelian USC}, we may take the finite set $\Jf{U}$
  to be $\mathcal{H}_{\Lambda \cross U}$.

  Now $X \subs s \cdot J$, so $s^{-1} \cdot X \subs J$. Thus $X$ is an
  atypical component of the intersection $(U_p \cap s \cdot J) \cap
  (\kappa \cap s \cdot J)$ in $s \cdot J$ iff $s^{-1} \cdot X$ is an
  atypical component of the intersection $(s^{-1} \cdot U_p \cap J)
  \cap (s^{-1} \cdot \kappa \cap J)$ in $J$. If so, we may inductively
  find a smaller subgroup $J' \subs J$ from a finite set and a point
  $s' \in J(C)$ such that $X \subs s' \cdot J'$. Thus, inductively, we
  may assume that $X$ is a typical component of $(U_p \cap s \cdot J)
  \cap (\kappa \cap s \cdot J)$ in $s \cdot J$.
\end{proof}

The special case of the theorem where $S$ is an algebraic torus can be restated in more
elementary, less geometric terms.
\begin{cor}\label{gm weak CIT}
  For each $n,d,r \in \N$, there is $N \in \N$ with the following
  property. Suppose that $x = (x_1,\ldots,x_n) \in (\C^*)^n$ lies in
  an algebraic variety $U$ defined by $r$ polynomials of degree at
  most $d$, with coefficients in a subfield $K$ of $\C$. Suppose also
  that $x$ satisfies $l$ multiplicative dependencies of the form
  $\prod_{i=1}^n x_i^{m_{ij}} = a_j$ with the $m_{ij} \in \Z$ and $a_j
  \in K$, and that $\td(K(x_1,\ldots,x_n)/K) = \dim U - l + t$, with
  $t > 0$.

  Then $x$ satisfies $t$ multiplicative dependencies with the powers
  $m_{ij}$ having modulus at most $N$ and the $a_j$ lying in
  $\bar{K}$.
\end{cor}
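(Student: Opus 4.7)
The strategy is to specialize Theorem~\ref{weak CIT} to $S = \gm^n$, using the fact that the algebraic subgroups of $\gm^n$ are precisely the subvarieties cut out by monomial equations $\prod_i y_i^{m_i} = 1$ with integer exponents, so that any \emph{finite} family of such subgroups has uniformly bounded exponents.

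First, I would set up the geometric picture, working over $C = \bar K$. Without loss of generality, by passing to a maximal $\Z$-linearly independent subset of the given dependencies, one may assume the exponent matrix $(m_{ij})$ has rank $l$; then $\kappa := \class{y \in \gm^n}{\prod_i y_i^{m_{ij}} = a_j,\ j=1,\ldots,l}$ is a coset of an algebraic subgroup of $\gm^n$ of codimension $l$, so $\dim\kappa = n-l$. View $U$ as a member of the parametric family $(U_p)_{p \in P}$ consisting of subvarieties of $\gm^n$ defined by $r$ polynomials of degree at most $d$; this family is defined over $\Q$ and depends only on $n,d,r$.

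Next, I would check that some component of $U_p \cap \kappa$ containing $x$ is atypical with degree of atypicality at least $t$. Let $X$ be an irreducible component of $U_p \cap \kappa$ (defined over $\bar K$) containing $x$. Then
\[\dim X \ge \td(\bar K(x)/\bar K) = \td(K(x)/K) = \dim U - l + t,\]
so
\[\dim X - (\dim U + \dim\kappa - n) \ge (\dim U - l + t) - (\dim U - l) = t > 0.\]
Thus $X$ is atypical of degree $t' \ge t$. Applying Theorem~\ref{weak CIT} to the parametric family above yields a finite family $\Jf{U}$ of proper algebraic subgroups of $\gm^n$ (depending only on $n,d,r$), an element $J \in \Jf{U}$ of codimension at least $t' \ge t$, and a point $s \in \gm^n(\bar K)$ such that $X \subs s \cdot J$.

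Finally, I would read off the conclusion. Since each $J \in \Jf{U}$ is cut out by finitely many monomial equations with integer exponents, let $N = N(n,d,r)$ be the maximum absolute value of any integer exponent appearing in a defining monomial equation of any member of $\Jf{U}$. Because $x \in X \subs s\cdot J$, the point $x$ satisfies the defining equations of the coset $s \cdot J$, which have the form $\prod_i x_i^{m'_i} = b$ with $|m'_i| \le N$ and $b \in \bar K$ (a coordinate expression in the components of $s$). As $J$ has codimension at least $t$, at least $t$ of these equations are $\Z$-linearly independent, producing the desired $t$ new multiplicative dependencies.

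I do not anticipate a serious obstacle beyond the bookkeeping: the corollary is essentially a coordinate translation of the weak CIT, and the single key point is the finiteness of $\Jf{U}$, which forces the exponents appearing in its members to be uniformly bounded by a constant depending only on $n,d,r$. The mildest subtlety is the initial reduction to $\Z$-independent dependencies, which is justified because $x$ furnishes a solution so that any redundant dependencies are consequences of the independent ones.
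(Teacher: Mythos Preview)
Your proposal is correct and follows exactly the paper's approach: put the varieties cut out by $r$ polynomials of degree at most $d$ into a single parametric family, take $C=\bar K$, and apply Theorem~\ref{weak CIT}. The paper's proof is two sentences; you have simply filled in the bookkeeping (identifying $\kappa$, checking atypicality of degree $\ge t$, and reading off the bounded-exponent dependencies from the finite set $\Jf{U}$), which is entirely appropriate.
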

\begin{proof}
  The subvarieties $U$ of $\gm^n$ defined by $r$ polynomials of degree
  at most $d$ can be put into a single parametric family. Take $C =
  \bar{K}$ in \ref{weak CIT}.
\end{proof}

This statement for tori has independently been reproved by Bombieri,
Masser, and Zannier in \cite{BMZ07}. They also use Ax's theorem (the
Schanuel property for the exponential equation) but use a heights
argument rather than the compactness theorem to get the natural number
$N$. This gives them an explicit bound which cannot be obtained
directly from the compactness theorem. Masser has noted in a private
communication to me that their method should also extend to the
semiabelian case.

\subsection{Definability of rotundity}

We generalize and adapt the proof in section 3 of
\cite{Zilber05peACF0} to show that rotundity is a definable property
of a variety. As well as the notion of an atypical intersection, we
also need the notion of an atypical image of a variety under a map, in
the context of subvarieties of groups.

\begin{defn}
  Let $G$ be an algebraic group, $H$ an algebraic subgroup and $V$ an
  algebraic subvariety of $G$. Let $\ra{G}{q}{G/H}$ be the quotient
  map onto the coset space and write $V/H$ for the image of $V$ under
  $q$. This image $V/H$ is said to be \emph{typical} iff
  \[\dim V/H = \min\{\dim G/H, \dim V\}\]
  and \emph{atypical} iff
  \[\dim V/H < \min\{\dim G/H, \dim V\}.\]
\end{defn}

We use the fact that in the conclusion of theorem~\ref{weak CIT},
$X$ is a typical component of the intersection $(U_p \cap s \cdot H)
\cap (\kappa \cap s \cdot H)$ in $s \cdot H$. For convenience we
also choose the finite set $\Jf{W}$ of subgroups of $S$ given in
the conclusion of that theorem to contain the trivial subgroup. The
additive formula for fibres is used several times:
\begin{itemize}
\item[(AF)] For an irreducible variety $A$ and a surjective
  map $\ra{A}{f}{B}$,
\[\dim A = \dim B + \min_{b \in B} \dim f^{-1}(b).\]
\end{itemize}

\begin{theorem}\label{finitary nature of rotundity}
  Let $S$ be a semiabelian variety and $V \subs TS$ an
  irreducible subvariety. If $V$ is not rotund then there is $J \in
  \Jf{W}$ where $W = \pr_S V$ such that $\dim V/TJ < \dim S/J$.
  That is, failure of rotundity is witnessed by a member of the finite
  set $\Jf{W}$.
\end{theorem}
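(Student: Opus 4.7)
The plan is to use the non-rotundity of $V$ to produce an atypical intersection of $W$ with a coset of the witnessing subgroup, apply the Weak CIT to recover a candidate $J\in\Jf{W}$, and then promote the resulting dimensional inequality from $W$ to $V$.

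First I would reduce from $V$ to $W$. Suppose $V$ is not rotund, so there is a proper algebraic subgroup $K$ of $S$ with $\dim V/TK<\dim S/K$. Since the projection $V/TK\to W/K$ is surjective, $\dim W/K\le\dim V/TK<\dim S/K$. Consequently, for generic $y\in W$ the fibre $W\cap y\cdot K$ of the quotient $W\to W/K$ over the image of $y$ has dimension $\dim W-\dim W/K$, strictly exceeding the expected value $\dim W+\dim K-\dim S$; the irreducible component of this fibre through $y$ is therefore an atypical component of atypicality degree $t:=\dim S/K-\dim W/K>0$.

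Next I would invoke Theorem~\ref{weak CIT} applied to the one-element parametric family $\{W\}$: it produces a finite family $\Jf{W}$ (containing the trivial subgroup by convention) such that for each generic $y$ there are $J\in\Jf{W}$ of codimension at least $t$ and $s\in S(C)$ with the atypical component $X\subseteq s\cdot J$, where moreover $X$ is a typical component of $(W\cap sJ)\cap(yK\cap sJ)$ inside $s\cdot J$. Finiteness of $\Jf{W}$ together with genericity lets one fix a single $J\in\Jf{W}$ that works for $y$ in a Zariski-dense subset of $W$. Using the typicality clause, together with the fact that $yK\cap sJ$ is a coset of $K\cap J$ (hence of dimension $\dim(K\cap J)$), routine dimension book-keeping gives
\[\dim W/J\;=\;\dim W/K-\dim(JK/K),\]
which, combined with $\dim W/K<\dim S/K\le\dim S/(K\cap J)$, easily rearranges to $\dim W/J<\dim S/J$.

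The main obstacle is the final step: promoting $\dim W/J<\dim S/J$ to $\dim V/TJ<\dim S/J$, since a priori $V/TJ$ might spread out in the $L(S/J)$-direction to fill the ambient. When $\dim V<\dim S$ one may simply take $J$ to be the trivial subgroup, which witnesses non-rotundity and lies in $\Jf{W}$. In general, I would exploit the symmetric observation that $\dim V/TK<\dim S/K$ also forces $\dim U/LK<\dim S/K$ for $U:=\pr_{LS}V\subseteq LS$, because $V/TK$ likewise surjects onto $U/LK\subseteq L(S/K)$. Since the algebraic subgroups of the vector group $LS$ are uniformly definable by linear equations, a parallel, essentially linear-algebraic analysis applied to $U$ ought to yield the matching bound $\dim U/LJ<\dim S/J$ for the same $J$. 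A careful tracking of the fibres of $V\to W$ relative to $TJ$, along the lines of \S3 of \cite{Zilber05peACF0}, should then combine these two inequalities to force $\dim V/TJ<\dim S/J$, completing the proof.
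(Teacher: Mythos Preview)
Your first two steps are essentially the paper's Steps~1 and~2: reduce to an atypical image $W/K$, pick a generic $y\in W$, get an atypical component $X$ of $W\cap yK$, and apply the Weak CIT to find $J\in\Jf{W}$ with $X\subs s\cdot J$. Your dimension identity $\dim W/J=\dim W/K-\dim(JK/K)$ is the paper's equation~(E), and your derivation of $\dim W/J<\dim S/J$ is correct.

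The genuine gap is the promotion step. Your proposed route via $U=\pr_{LS}V$ does not work, for two reasons. First, the subgroup $J$ came from applying the Weak CIT to $W\subs S$; there is no mechanism forcing the \emph{same} $J$ to witness an atypicality for $U\subs LS$, and a separate linear-algebraic argument on $U$ would in general produce a different subgroup. Second, even granting both $\dim W/J<\dim S/J$ and $\dim U/LJ<\dim S/J$, you only get $\dim V/TJ\le\dim W/J+\dim U/LJ$ from the inclusion $V/TJ\subs (U/LJ)\times(W/J)$, which bounds $\dim V/TJ$ by roughly $2\dim S/J$, not $\dim S/J$.

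What the paper does instead is abandon $U$ entirely and pass through the intermediate subgroup $H'=J\cap K$. The typicality clause of the Weak CIT yields not only your equation~(E) but also a second equation $\dim W/K=\dim W/H'$ (the paper's~(F)), which you did not extract. One then fibres $V$ over $W$ with fibres $V_b\subs LS$ and argues in two moves: from $K$ to $H'$ the fibres $V_b/LH'$ grow by at most $k=\dim S/H'-\dim S/K$ while $\dim W/H'=\dim W/K$ by~(F), giving $\dim V/TH'<\dim S/H'$; then from $H'$ to $J$ the fibres only shrink (since $H'\subs J$) while~(E) controls the change in $\dim W/J$, giving $\dim V/TJ<\dim S/J$. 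The missing idea in your proposal is precisely this two-stage passage through $J\cap K$ using both~(E) and~(F).
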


\begin{proof}
  Suppose that $\dim V/TH < \dim S/H$ for some algebraic subgroup $H$
  of $S$. If $H = 1$ is the trivial subgroup then we are done since $1
  \in \Jf{W}$, so we assume that $\dim V \ge \dim S$, and $H \neq 1$.

  \paragraph{Step 1} The image $W/H$ is atypical.

  $W/H$ is a projection of $V/TH$, so 
  \[\dim W/H \le \dim V/TH < \dim S/H.\]
  Thus if $W/H$ were typical we would have $\dim W/H = \dim W$, so the
  fibres of the map $\ra{W}{}{W/H}$ would be finite. The fibres of
  $\ra{V}{}{V/TH}$ could then have dimension at most $\dim H$, so
  \[\dim V/TH \ge \dim V - \dim H \ge \dim S - \dim H = \dim
  S/H\] 
  which contradicts the assumption. Thus $W/H$ is atypical.

  \paragraph{Step 2} There is $J \in \Jf{W}$ such that
  \begin{equation}\label{eqnE}\dim W/J = \dim W/H - \dim J/(J \cap H)
  \end{equation}
  and
  \begin{equation}\label{eqnF}\dim W/H = \dim W/(J \cap H).
  \end{equation}

  Let $x \in W$ be generic over a field of definition of $S,H$ and
  $W$, and let $\kappa$ be the coset $x\cdot H$. Then $W \cap \kappa$
  is a generic fibre of the quotient map so, by the addition formula
  for fibres (AF),
  \[\dim W \cap \kappa = \dim W - \dim W/H\]
  which is strictly positive as the image is atypical. Let $X$ be the
  component of $W \cap \kappa$ containing $x$, which must be of
  maximal dimension by genericity of $x$. Thus
  \begin{equation}\label{eqnA}\dim X = \dim (W \cap \kappa) = \dim W
    - \dim W/H \end{equation} 
  and by atypicality of the image
  \[\dim W/H < \dim S/H = \dim S - \dim H\]
  so 
  \[\dim X > \dim W + \dim H - \dim S.\]
  Now $\dim H = \dim \kappa$ so $X$ is an atypical component of the
  intersection $W \cap \kappa$ in $S$. By theorem~\ref{weak CIT}
  there is $J \in \Jf{W}$ such that $X$ is contained in the coset
  $\kappa' = x \cdot J$. Thus the quotient of $X$ by $J \cap H$ is
  isomorphic to the quotient by $H$, so since $X$ is a component of
  maximal dimension this implies (\ref{eqnF}).

  By the remark above, $X$ is a typical component of $(W \cap \kappa')
  \cap (\kappa \cap \kappa')$ in $\kappa'$, that is
  \begin{equation}\label{eqnB}\dim X = \dim(W \cap \kappa') +
    \dim(\kappa \cap \kappa') - \dim \kappa'.\end{equation}
  Let $Y$ be the connected component of $(W \cap \kappa')$ containing
  $X$. Then (\ref{eqnB}) becomes
  \begin{equation}\label{eqnC}\dim X = \dim Y + \dim(J \cap H) - \dim
    J. \end{equation}
  $Y$ is a generic fibre of $\ra{W}{}{W/J}$, so by (AF) again,
  \begin{equation}\label{eqnD}\dim Y = \dim W - \dim J.\end{equation}
  Substituting (\ref{eqnA}) and (\ref{eqnD}) into (\ref{eqnC}) gives
  (\ref{eqnE}) as required.

  Let $H' = J \cap H$.

  \paragraph{Step 3} $\dim V/TH' < \dim S/H'$.

  For $b \in W$ write $V_b \subs LS$ for the fibre of the projection
  $\ra{V}{}{W}$. The projection $\ra{LS/LH'}{}{LS/LH}$ has fibres of
  dimension $k = \dim S/H' - \dim S/H$, so for any $b$ the fibres of
  the map $\ra{V_b/LH'}{}{V_b / LH}$ have dimension at most $k$. Thus
  \begin{equation}\label{eqnH}\dim V_b / LH' \le \dim V_b / LH + k.
  \end{equation}
    By (AF),
  \begin{equation}\label{eqnG} \dim V / TH' = \dim W/H' +
    \min_{b\in W} \dim V_b / LH'\end{equation} and substituting in
  (\ref{eqnG}) using (\ref{eqnF}) and (\ref{eqnH}) gives
  \[\dim V/TH' \le \dim W/H + \min_{b \in W} \dim V_b / LH +
  k\] which by (AF) again implies
  \[\dim V/TH' \le \dim V/TH + k < \dim S/H'\]
  as required.

  \paragraph{Step 4} $\dim V / TJ < \dim S/J$.

  This is very similar to step 3. Since $H' \subs J$, the quotient
  factors as 
  \[V \rTo V/TH' \rTo V/TJ\] 
  so for any $b \in W$, 
  \begin{equation}\label{eqnI}\dim V_b/LJ \le \dim V_b/LH'.
  \end{equation}
  By (AF),
  \begin{equation} \dim V/TJ = \dim W/J + \min_{b\in W} \dim
    V_b/LJ \end{equation} 
  and using (\ref{eqnE}) and (\ref{eqnI}) this becomes
  \[\dim V/TJ \le \dim W/H' + \min_{b\in W} \dim V_b/LH' +
  (\dim S/J - \dim S/H').\] 
  Applying (AF) a final time with the conclusion of Step 3 gives
  \[\dim V/TJ < \dim S/J\]
  as required.
\end{proof}

\begin{cor}\label{EC is first order}
  The EC axiom can be written as a first order axiom scheme in the
  language $\L_\Ss$.
\end{cor}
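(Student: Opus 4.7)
The plan is to translate each clause of EC into first-order form in $\L_\Ss$. Membership in $\Gamma_S$, in a fixed parametric family of varieties, and in $C$ are already first order, so the only non-trivial clause is ``$V$ is irreducible and rotund.'' The main obstacle is rotundity, which a priori quantifies over \emph{all} algebraic quotient maps $S \to H$, of which there are infinitely many.

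To handle rotundity I would first parametrize. For each $d \in \N$, the subvarieties of $TS$ of degree at most $d$ form a parametric family $(V_p)_{p \in P_d}$ with $P_d$ a constructible set defined over $\Q$, and every subvariety of $TS$ appears in some such family. Within such a family, irreducibility of $V_p$ is a first-order condition on $p$ by standard algebraic geometry, and, crucially, theorem~\ref{finitary nature of rotundity} applied to the projection family $(\pr_S V_p)_{p \in P_d}$ produces a \emph{single, finite} set $\Jf{W}$ of proper algebraic subgroups of $S$ such that for every $p$, $V_p$ fails to be rotund iff some $J \in \Jf{W}$ satisfies $\dim V_p/TJ < \dim S/J$. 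Since $\Jf{W}$ is finite and independent of $p$, and the dimensions of the images $V_p/TJ$ are uniformly definable in $p$ by the fibre condition (lemma~\ref{fibre condition}), rotundity of $V_p$ is a first-order condition on $p$.

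The axiom scheme then writes itself. For each $S \in \Ss$, each $d \in \N$, and each parametric family $(W_e)_{e \in Q}$ of subvarieties of $TS$ with $Q$ a constructible set defined over $C_0$, include the axiom
\[ \forall p \in P_d \Big[\, V_p \text{ irreducible and rotund} \wedge (\forall e \in Q(C))(W_e \subsetneq V_p) \ \longrightarrow\ \exists g\, \big(g \in \Gamma_S \cap V_p \wedge (\forall e \in Q(C))\, g \notin W_e\big) \,\Big]. \]
Every ingredient is first order in $\L_\Ss$: irreducibility and rotundity by the previous paragraph, the containments $W_e \subsetneq V_p$ and $g \in V_p$ in the field language (with parameters $p, e$), the quantifier $(\forall e \in Q(C))$ using the constant-field predicate, and $g \in \Gamma_S$ using the predicate symbol. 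Taken together over all $S, d, Q$ and families $(W_e)$, this countable collection of first-order sentences is equivalent to EC; conversely EC plainly implies each instance.
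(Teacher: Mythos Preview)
Your proposal is correct and follows essentially the same approach as the paper: parametrize the subvarieties of $TS$, use theorem~\ref{finitary nature of rotundity} (uniformly over the family, via the Weak CIT) to reduce rotundity to a finite conjunction of dimension inequalities, and then write down the obvious scheme. The only cosmetic differences are that the paper invokes the equivalence $\mathrm{EC}\Leftrightarrow\mathrm{EC}'$ (proposition~\ref{EC' implies EC}) to restrict attention to $\dim V_p = \dim S$, and it folds the properness condition on $W_e$ into the conclusion of the axiom rather than the hypothesis; neither changes the substance of the argument.
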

\begin{proof}
For a parametric family $(V_p)_{p \in P}$ of subvarieties of $TS$, let $\Rot_V(p)$ be given by
\[V_p \mbox{ is irreducible } \;\&\; \dim V_p = \dim S \;\&\; \bigwedge\nolimits_{J \in \Jf{\pr_S V}} \dim
      V_p/TJ \ge \dim S/J .\]
By theorem~\ref{finitary nature of rotundity}, this says that $V_p$ is rotund, irreducible, and of dimension $n = \dim S$. 
 By \cite[lemma~3]{Hru92}, for any parametric family of varieties $(V_p)_{p
  \in P}$ there is a first order formula in $p$ expressing that $V_p$ is irreducible. Hence by lemma~\ref{fibre condition} and the finiteness of $\Jf{\pr_S V}$, there is a first-order formula in the language of fields expressing $\Rot_V(p)$.

By proposition~\ref{EC' implies EC}, EC and EC$'$ are equivalent, so in the statement of EC it is enough to consider perfectly rotund subvarieties. In fact perfect rotundity is not definable, but every perfectly rotund subvariety is irreducible and of dimension $n$, so it is enough to consider just these subvarieties.

  For each $S \in \Ss$ and each pair of parametric families $(V_p)_{p \in P}$, $(W_e)_{e \in Q(C)}$ of
  subvarieties of $TS$, the families defined over $C_0$, take the
  following axiom.
  \[\begin{split}&
(\forall p\in P)(\exists g \in TS)(\forall e \in Q(C))[\Rot_V(p)  \to \\
& \mbox{\hspace{5em}} [g\in\Gamma_S \cap V_p \wedge (g \notin W_e \vee \dim W_e \cap V_p = \dim V_p)]]
 \end{split}\]
For the $V_p$ which are irreducible, the last clause says that $g$ does not lie in any of the $W_e$ whose intersection with $V_p$ is a proper subvariety of $V_p$. Hence this scheme of first-order sentences captures the EC property.
\end{proof}

\subsection{The first order theory}

Recall that $T_\Ss$ is the $\L_\Ss$-theory axiomatized by the
algebraic axioms U1 --- U7 and the Schanuel property SP, which are
given on page~\pageref{universal theory}, together with the
existential closedness axiom EC and non-triviality NT, which are given
on page~\pageref{EC defn}.

\begin{theorem}\label{T_S is first order}
  For each set $\Ss$ of semiabelian varieties, the theory $T_\Ss$ is
  the complete first order theory of the reduct to the language
  $\L_\Ss$ of a differentially closed field.
\end{theorem}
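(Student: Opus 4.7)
The plan has three components. First, I would verify that $T_\Ss$ is first-order axiomatizable by collecting earlier results: axioms U1--U7 are first-order by Lemma 2.1.1, the Schanuel property SP is first-order by Corollary 4.1.4, the existential closedness scheme EC is first-order by Corollary 4.3.2, and NT is the single existential sentence $(\exists x)(x \notin C)$. Second, I would verify that the $\L_\Ss$-reduct of any differentially closed field is a model: U1--U7 are given by Proposition 3.2.2 together with the fact that a DCF is algebraically closed; SP is Corollary 3.3.4; EC is Theorem 3.4.1; and NT holds since any DCF contains non-constants. The substantive task is completeness.

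For completeness, the strategy is to show that any $\aleph_1$-saturated model $N \models T_\Ss$ in fact satisfies SEC and ID, so that by a back-and-forth argument modelled on the proof of Theorem 2.9.2 any two such saturated models are $L_{\infty\omega}$-equivalent, hence elementarily equivalent. To get SEC in $N$, fix a finitely generated field of definition $A$ of a rotund $V \subs TS$ and consider the partial type $p(g)$ containing $g \in \Gamma_S \cap V$ together with, for each parametric family $(W_e)_{e\in Q(C)}$ of subvarieties of $V$ defined over $C_0$, the single $\L_\Ss$-formula
\[(\forall e \in Q(C))\,[\,g \notin W_e \,\vee\, \dim(W_e \cap V) = \dim V\,].\]
Since $C_0$ is countable there are only countably many such parametric families, so $p$ has countably many formulas. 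Finite satisfiability follows by applying EC to an appropriate product family (with parameters from $A$ absorbed using that $A$ is finitely generated), and then $\aleph_1$-saturation realizes $p$, yielding SEC. ID is extracted similarly from NT, EC, and saturation by iteratively producing new $\cl$-independent elements.

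With SEC and ID established for two $\aleph_1$-saturated models $N_1, N_2 \models T_\Ss$, one runs a back-and-forth on finitely generated self-sufficient substructures exactly as in Theorem 2.9.2: at stage $i$ one has $f_i : A_i^1 \to A_i^2$ with $A_i^j \strong N_j$ finitely generated and $f_i$ an $\L_\Ss$-isomorphism; given a new element to absorb on the $N_1$ side one forms its hull to obtain a self-sufficient extension $A_i^1 \strong B$, classifies it via Lemma 2.5.2 by $(\Smax, \loc_A(g), \td(B/A(g)))$, takes the free amalgam over $f_i$ (Proposition 2.5.1), and realizes the resulting self-sufficient extension inside $N_2$ by applying SEC to produce a $\Gamma_S$-basis with the prescribed rotund locus and ID to supply the purely transcendental complement. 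Karp's back-and-forth characterization then gives $N_1 \equiv N_2$; since every model of $T_\Ss$ has an $\aleph_1$-saturated elementary extension, completeness follows.

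The main obstacle I expect is the realization step inside $N_2$: one must verify that the element produced by SEC, combined with the transcendental extension from ID, yields not merely a strong substructure isomorphic to $B$ as an object of $\Kso$ but also one extending the given partial $\L_\Ss$-isomorphism $f_i$. This reduces, by Lemma 2.5.2, to matching the isogeny class of $\Smax(B/A_i^1)$ and the $A_i^1$-locus of a basis, both of which are finitary algebraic data preserved by $f_i$, so that the free amalgam construction and the density statement of SEC suffice. Care is also needed to track that the union of the $\strong$-chain produced by the back-and-forth covers both $N_1$ and $N_2$, which is guaranteed since each element on either side is enumerated at some stage and the hulls remain finitely generated by Lemma 2.4.5.
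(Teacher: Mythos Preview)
Your outline is broadly on target, and reducing completeness to verifying SEC and ID in a saturated model is the right idea. However, there is a genuine gap in your derivation of SEC from EC plus saturation. The type $p(g)$ you write down contains, for each $C_0$-parametric family $(W_e)_{e\in Q(C)}$, only the avoidance formula with $e$ ranging over $Q(C)$; realizing this type yields a point of $\Gamma_S \cap V$ generic over $C$, but SEC demands genericity over $A \cup C$. Since $V$ is defined over $A$, which need not lie inside $C$, there are proper subvarieties of $V$ defined over $\acl(C(a))$ (for the finite generating tuple $a$ of $A$) that are not of the form $W_e$ for any $e \in Q(C)$ in a $C_0$-family. Your phrase ``parameters from $A$ absorbed'' does not explain how EC---whose axiom scheme in Corollary~\ref{EC is first order} has the $W$-family defined over $C_0$ and the parameter $e$ quantified only over $Q(C)$---can be made to avoid a subvariety $W_{a,c}$ with the specific non-constant tuple $a$ held fixed.

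The paper closes exactly this gap with the Rabinovich trick: given a proper $A$-defined $W \subs V$ cut out by $f_1,\ldots,f_m$, one passes to a rotund $V' \subs T(S \times S_1 \times \cdots \times S_m)$ by adjoining equations $f_i(\bar x)z_i = 1$ in new Lie-algebra coordinates, so that any point of $\Gamma_{S'}\cap V'$ projects into $\Gamma_S \cap (V\smallsetminus W)$; EC applied to $V'$ then combines with saturation to give the generic point over $A\cup C$. This is a short but real additional idea that your proposal omits. Separately, the paper's route to completeness is more economical than yours: it first observes from the counting in Proposition~\ref{Ks is amalg cat} that every completion of $T_\Ss$ is $\aleph_0$-stable, hence has a \emph{countable} saturated model $M$; it then checks SEC, ID and $\td(C/C_0)=\aleph_0$ for $M$ and invokes Theorem~\ref{uniqueness of U} directly to conclude $M\cong U$. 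Your plan instead re-runs the back-and-forth of that theorem between two $\aleph_1$-saturated models; this can be made to work, but it duplicates effort already done in \S\ref{amalg section} and forces you to deal with the fact that the two models have different (uncountable) constant fields, whereas the category $\K$ and Lemma~\ref{finitely generated extensions} were set up relative to a single fixed $C$.
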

\begin{proof}
  We have shown that axioms U1 --- U7 are first order in lemma~\ref{U
  axioms are first order}, that the Schanuel property is first order
  in corollary~\ref{SP is first order}, and that existential
  closedness is first order in corollary~\ref{EC is first order}. It
  is immediate that NT is a first order axiom. Hence $T_\Ss$ is a
  first order theory. Proposition~\ref{reduct satisfies U axioms}
  shows that the reduct satisfies U1 --- U7, corollary~\ref{reduct
    satisfies SP} says that it satisfies SP, and theorem~\ref{reduct
    satisfies EC} says that it satisfies EC. NT is immediate.

  Since $T_\Ss$ is a first order theory, the part of
  proposition~\ref{Ks is amalg cat} which states that $\Kso$ has only
  countably many objects and countably many extensions of each object shows that
  every completion of $T_\Ss$ is $\aleph_0$-stable, and so (since it has no finite models) has a
  countable saturated model.  Let $M$ be a countable saturated model
  of $T_\Ss$. By saturation, $\td(C/C_0)=\aleph_0$. For each $n \in \N$, there is a unique $n$-type of a
  $\cl$-independent $n$-tuple. All of these types are realised in $M$,
  and hence $M$ satisfies ID. We claim that $M$ satisfies SEC. 

  Let $S \in \Ss$, let $V \subs TS$ be a rotund subvariety, and $A \subs F$ a finitely generated field of definition of $V$. For each proper subvariety $W$ of $V$, defined over $A$, we may use the Rabinovich trick to replace $V \minus W$ by a some $V' \subs TS'$ for some larger $S' \in \Ss$ as follows. Let $\bar{x}$ be the coordinates (homogeneous coordinates if necessary) of the variety $TS$, and say that $W$ is given by the equations $f_i(\bar{x}) = 0$ for $i=1,\ldots,m$. Let $S_1,\ldots,S_m \in \Ss$ be nontrivial, and for each $i$ let $z_i$ be a coordinate of the Lie algebra $LS_i$. Let $S' = S \cross \prod_{i=1}^m S_i$ and let $V'$ be the subvariety of $TS'$ given by $\bar{x} \in V$ and the equations $f_i(\bar{x})z_i = 1$ for $i=1,\ldots,m$. Then $V'$ is a rotund subvariety of $TS'$. 

  If necessary, we may now intersect $V'$ with generic hyperplanes as in the proof of proposition~\ref{EC' implies EC} to ensure that $\dim V' = \dim S'$. We can regard a family $(W_e)_{e \in Q(C)}$ of proper subvarieties of $V$, defined over $C$, as a family of subvarieties of $V'$ via the obvious co-ordinate maps. Now by EC there is $h \in \Gamma_{S'} \cap V' \minus \bigcup_{e \in Q(C)} W_e$. By the definition of $V'$, the projection $g$ of $h$ to $TS$ lies in $\Gamma_S \cap (V \minus W \minus \bigcup_{e \in Q(C)} W_e$.
  Hence, by the $\aleph_0$-saturation of $M$, there is $g \in \Gamma_S \cap V$, generic in $V$ over $A \cup C$. That is, SEC holds in $M$.

Thus, by theorem~\ref{uniqueness of U}, $M$ is isomorphic to the Fraiss\'e limit $U$. So $T_\Ss$ has exactly one countable saturated model, so only one completion, and hence it is complete.
\end{proof}

We end with two simple observations about the theories $T_\Ss$.
\begin{prop}
  For each set $\Ss$, the theory $T_\Ss$ has Morley rank $\omega$. 
\end{prop}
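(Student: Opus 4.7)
The plan is to sandwich $\RM(T_\Ss)$ between $\omega$ and $\omega$. Since $T_\Ss$ is $\aleph_0$-stable by (the proof of) Theorem~\ref{T_S is first order}, its Morley rank is an ordinal, and since $T_\Ss$ has only countably many completions (indeed it is complete), it suffices to bound $\RM$ above by $\omega$ and below by every natural number $n$.

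For the upper bound $\RM(T_\Ss)\le\omega$, I would exploit that $T_\Ss$ is by construction the theory of a reduct of $\mathrm{DCF}_0$: by Theorem~\ref{reduct satisfies EC} and Corollary~\ref{reduct satisfies SP}, the $\L_\Ss$-reduct of a differentially closed field is a model of $T_\Ss$, and by Theorem~\ref{T_S is first order} this determines $T_\Ss$ completely. Every $\L_\Ss$-formula is in particular a formula of the differential field language interpreting the same definable set, so the Morley rank of each $\L_\Ss$-formula (computed in the reduct) is bounded above by its Morley rank computed in the differential expansion. Since $\RM(\mathrm{DCF}_0)=\omega$, this gives $\RM(T_\Ss)\le\omega$.

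For the lower bound I would argue $\RM(T_\Ss)\ge n$ for each $n$ by exhibiting a formula of Morley rank at least $n$. If $\Ss=\{1\}$ the theory reduces to $\ACF_0$ with a named algebraically closed subfield $C$, and the argument is easy; otherwise fix a nontrivial $S\in\Ss$. By Proposition~\ref{SEC lemma} (ID) the Fraiss\'e limit $U$ has infinite $\cl$-dimension, so for every $n$ one can realize the unique type $q_n$ of a $\cl$-independent $n$-tuple $(a_1,\ldots,a_n)$ over $C$. Using SEC one realizes each $a_i$ as the $LS$-coordinate of a generic point of $\Gamma_S$. The key point is that the pregeometric dimension $d$ from \S\ref{pregeometry section} is a lower bound for Morley rank: for $x\notin C$ the type $\tp(x/C)$ is non-algebraic, so has Morley rank $\ge 1$, and $\cl$-independence forces the relevant types to be non-forking over the previous parameters, so that $\RM(a_1\ldots a_n/C)\ge n$ by the usual additivity estimate $\RM(\bar a b/A)\ge \RM(b/A\bar a)+\RM(\bar a/A)$ when $b$ is independent of $\bar a$ over $A$ in an $\aleph_0$-stable theory.

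The main obstacle is the lower bound: concretely, showing that $\cl$-independence in the pregeometry of \S\ref{pregeometry section} implies forking independence, so that the pregeometric dimension $d$ lies below the Lascar (and hence Morley) rank. The cleanest route is to establish, using the $\Kso$-saturation of $U$ and Proposition~\ref{free amalgamation}, that a $\cl$-independent extension is a non-forking extension, after which one inductively refines the generic type $q_n$ along the free amalgamation of $n$ copies of a perfectly rotund subvariety with $\dim V=\dim S$. Combining with the upper bound yields $\RM(T_\Ss)=\omega$.
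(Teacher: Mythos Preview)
Your upper bound is exactly the paper's: $T_\Ss$ is a reduct of $\mathrm{DCF}_0$, so $\RM(T_\Ss)\le\omega$.

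For the lower bound you take a much harder road than the paper, and you miss the short argument even though you essentially state it yourself in the special case $\Ss=\{1\}$. The language $\L_\Ss$ always contains the unary predicate $C$, so for \emph{every} $\Ss$ the theory of a proper pair of algebraically closed fields is a reduct of $T_\Ss$. That pair theory has Morley rank $\omega$, and Morley rank can only increase when passing to an expansion, so $\RM(T_\Ss)\ge\omega$ immediately. This is precisely the paper's proof; there is no need to invoke SEC, the pregeometry, or free amalgamation.

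Your longer route is plausible in outline but has two soft spots. First, the inequality $\RM(\bar a b/A)\ge \RM(b/A\bar a)+\RM(\bar a/A)$ is \emph{not} a general fact about Morley rank, even under independence; it is the Lascar inequality for $U$-rank. Since $U\le\RM$ in an $\aleph_0$-stable theory this is easily repaired by running the argument with $U$-rank instead, but as written it is not correct. Second, and more seriously, you correctly flag that the implication ``$\cl$-independence $\Rightarrow$ forking independence'' is the crux, and you only sketch how to get it. In Hrushovski-style amalgamation classes this is typically true and is proved via the kind of free-amalgam/stationarity argument you indicate, but it is genuine extra work not carried out in the paper. Given that the pair-of-fields reduct gives the lower bound in one line, there is no reason to go this way.
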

\begin{proof}
  $T_\Ss$ is a reduct of DCF${}_0$, hence it has Morley rank at most
  $\omega$. It has the theory of pairs of algebraically closed fields
  as a reduct (which in fact is $T_\Ss$ for $\Ss = \{1\}$), which has Morley rank
  $\omega$, so $T_\Ss$ has Morley rank $\omega$.
\end{proof}

\begin{prop}
  If $\Ss$ and $\Ss'$ are distinct collections of semiabelian varieties, each closed under products, subgroups, quotients, and under isogeny, then $T_\Ss \neq T_{\Ss'}$. (They are theories in different languages, so we mean they have no common definitional expansion.) Furthermore, all the theories $T_\Ss$ are proper reducts of (expansions by constant symbols of) $\mathrm{DFC}_0$.
\end{prop}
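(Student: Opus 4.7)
The plan is to prove the two assertions in sequence.

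For the first, suppose $\Ss\neq\Ss'$; by symmetry fix $S_0\in\Ss\setminus\Ss'$. The goal is to show that $\Gamma_{S_0}$, a predicate of $\L_\Ss$, is not definable by any $\L_{\Ss'}$-formula (with parameters); this precludes a common definitional expansion. Let $\Ss^+$ be the closure of $\Ss\cup\Ss'$ under products, subgroups, quotients, and isogeny, and let $U^+$ be the Fraiss\'e limit of $T_{\Ss^+}$; its $\L_\Ss$- and $\L_{\Ss'}$-reducts are models of $T_\Ss$ and $T_{\Ss'}$ respectively, since every axiom scheme of either theory restricts straightforwardly to the smaller language. Applying SEC in $T_{\Ss^+}$ to the rotund variety $V=TS_0$ produces $g_0\in\Gamma_{S_0}(U^+)$ that is generic in $TS_0$ over $C$. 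I then build $g_1\in TS_0(U^+)\setminus\Gamma_{S_0}$ sharing its $\L_{\Ss'}$-type over $C$ with $g_0$, which contradicts $\L_{\Ss'}$-definability of $\Gamma_{S_0}$.

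To construct $g_1$ I would invoke the free asymmetric amalgamation (Proposition~\ref{free amalgamation}) and $\Kso$-saturation of $U^+$: adjoin $g_1$ as a generic element of $TS_0$ over $\hull{g_0}$, omitting the $\Gamma_{S_0}$-relation while forcing, for every quotient map $q\colon S_0\onto S_0/K$ with $S_0/K\in\Ss'$, the $Tq$-image of $g_1$ to coincide in the $\L_{\Ss'}$-sense with the $Tq$-image of $g_0$. The closure conditions on $\Ss'$ are essential here: any nontrivial morphism $S_0\to S'$ with $S'\in\Ss'$ has image a subgroup of $S'$, hence in $\Ss'$; if it were an isogeny onto that image then $S_0\in\Ss'$ by closure under isogeny, contradicting $S_0\notin\Ss'$. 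So every such morphism factors through a proper quotient $S_0\onto S_0/K$ with $K\neq 1$ and $S_0/K\in\Ss'$, and all the $\L_{\Ss'}$-information about $g_0$ is routed through these quotients. The hard part will be verifying in full generality that the amalgamation can simultaneously match all these $Tq$-images while keeping $g_1\notin\Gamma_{S_0}$; this amounts to a careful isogeny-decomposition of $S_0$ into its ``$\Ss'$-part'' and a ``pure non-$\Ss'$'' complement, where the $\Gamma$-relations associated to the complement are invisible to $\L_{\Ss'}$ and hence free to be dropped when building $g_1$.

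For the second claim, $T_\Ss$ is a reduct of $\mathrm{DCF}_0$ expanded by constants for $C_0$ by Theorem~\ref{T_S is first order}, so only the ``proper'' part needs argument. I observe that rescaling the derivation preserves the $\L_\Ss$-reduct: for any $c\in C^{\times}$, the logarithmic derivative is $F$-linear in $D$, giving $\logd_{G,cD}=c\cdot\logd_{G,D}$ for every commutative algebraic group $G$ defined over $C$; dividing the defining equation by $c\neq 0$ yields $\Gamma_{S,cD}=\Gamma_{S,D}$ for every $S\in\Ss$, while the field, constant subfield, and named constants are unchanged. Hence $(F,D)$ and $(F,cD)$ determine the same $\L_\Ss$-structure. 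Taking $c\neq 1$ and any non-constant $a\in F$, the atomic differential-field formula $Dx=y$ defines different subsets of $F^{2}$ in these two structures, so it cannot be $\L_\Ss$-definable. Therefore $T_\Ss$ is a strict reduct of the theory of $\mathrm{DCF}_0$ with the constant symbols, as claimed.
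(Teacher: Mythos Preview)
Your argument for the second claim is correct and is genuinely different from the paper's. The paper deduces properness by invoking the first claim: enlarge $\Ss$ to some $\Ss''\supsetneq\Ss$ (extending $C_0$ if necessary), so $T_\Ss\neq T_{\Ss''}$ are distinct reducts of $\mathrm{DCF}_0$, hence $T_\Ss$ is proper. Your rescaling argument is self-contained and arguably cleaner: since $(F,D)$ and $(F,cD)$ have identical $\L_\Ss$-reducts but are distinct differential fields, the derivation cannot be recovered from the reduct, so no common definitional expansion exists.

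For the first claim, however, your plan is substantially incomplete and you acknowledge this. The paper's proof avoids the delicate amalgamation you propose by using the pregeometry dimension as an invariant. Working in an $\aleph_0$-saturated differentially closed field $F$, choose $S\in\Ss\setminus\Ss'$ and an absolutely free, strongly rotund $V\subs TS$ with $\dim V=\dim S+1$; by SEC there is $g\in\Gamma_S\cap V$ with $\grk_\Ss(g)=\dim S$, so $d_\Ss(g)=1$. The Schanuel property then forces any nontrivial $h\in\Gamma_{S'}$ algebraic over $C(g)$ (for $S'\in\Ss'$) to have $S'$ isogenous to a proper quotient of $S$; reducing to the case where $S$ is simple (possible because $\Ss,\Ss'$ are closed under subquotients and isogeny, so some simple factor of any $S_0\in\Ss\setminus\Ss'$ lies in $\Ss\setminus\Ss'$), one gets $\grk_{\Ss'}(g)=0$ and hence $d_{\Ss'}(g)=\dim S+1>1$. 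The two pregeometry dimensions disagree on $g$, so the reducts cannot be interdefinable.

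Your approach of producing $g_0\in\Gamma_{S_0}$ and $g_1\notin\Gamma_{S_0}$ with the same $\L_{\Ss'}$-type is in principle workable, and your observation that every homomorphism $S_0\to S'$ with $S'\in\Ss'$ factors through a proper quotient is exactly the right structural fact. But the ``hard part'' you flag---building $g_1$ so that all these quotient images match those of $g_0$ while $g_1$ itself avoids $\Gamma_{S_0}$---is the entire content, and your sketch does not carry it out. In particular, taking $g_1$ generic over $\hull{g_0}$ makes $Tq(g_1)$ algebraically independent from $Tq(g_0)$, so you cannot literally make the images coincide; you would instead need to arrange that $g_1$ has the same $\Smax_{\Ss'}$ and the same algebraic locus over $C$, which is essentially the dimension computation the paper does directly.
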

\begin{proof}
  Let $F$ be an $\aleph_0$-saturated differentially closed field. Without loss of generality $\Ss \nsubseteq \Ss'$, so take $S \in \Ss \minus \Ss'$. Choose an absolutely free and strongly rotund subvariety $V$ of $TS$, of dimension $\dim S + 1$. Then $F$ contains a point $g \in
  \Gamma_S \cap V$ with $\grk(g) = \dim S$, by SEC for the reduct
  of $F$ to $\L_{\Ss}$.

By the Schanuel property, $g$ cannot be
  algebraically dependent on any point $h \in \Gamma_{S'}$ for any $S' \in
  \Ss'$. Hence $g$ has dimension $\dim S+1$ in the sense of the
  pregeometry of the reduct to $\L_{\Ss'}$, but only dimension 1 in the sense of the pregeometry of the reduct to $\L_\Ss$. Thus the theories $T_\Ss$ and $T_{\Ss'}$ are distinct reducts of $\mathrm{DCF}_0$. Since every set $\Ss$ can be extended to a larger set of semiabelian varieties, if necessary by extending the constant field $C$, $T_\Ss$ is a proper reduct of $\mathrm{DCF}_0$.
\end{proof}



\end{document}